\newcommand\Conv{\operatorname{Conv}}
\newcommand{\diam}{\operatorname{diam}}
\newcommand{\ud}{\,d} 
\newcommand{\R}{\mathbb{R}}
\newcommand{\tir}[1]{\ensuremath{\overline {#1}}}
\def\whsq{\vbox to 5.8pt 
{\offinterlineskip\hrule 
\hbox to 5.8pt{\vrule height 
5.1pt\hss\vrule height 5.1pt}\hrule}} 
\def\Qed{{\hfill {\whsq}}} 
\def\<{\langle} 
\def\>{\rangle} 
\def\PP{{\mathop{{\rm I}\kern-.2em{\rm P}}\nolimits}} 
\def\FF{{\mathop{{\rm I}\kern-.2em{\rm F}}\nolimits}}   
\def\ZZ{{\mathop{{\rm I}\kern-.2em{\rm Z}}\nolimits}}
\begin{document}

\title{On the weak convergence of Monge-Amp\`ere measures for discrete convex mesh functions
 %\thanks{Grants or other notes
%about the article that should go on the front page should be
%placed here. General acknowledgments should be placed at the end of the article.}
}
%\subtitle{Do you have a subtitle?\\ If so, write it here}

\titlerunning{Monge-Amp\`ere measures for discrete convex mesh functions}        % if too long for running head

\author{Gerard Awanou}

%\authorrunning{Short form of author list} % if too long for running head

\institute{  Gerard Awanou \at
              Department of Mathematics, Statistics, and Computer Science (M/C 249), University of Illinois at Chicago,
Chicago, IL, 60607-7045\\              
              Tel.: +1-312-413-2167\\
              Fax: +1-312-996-1491 \\
              \email{awanou@uic.edu} \\
              ORCID https://orcid.org/0000-0001-9408-3078                  
              }

\date{Received: date / Accepted: date}
% The correct dates will be entered by the editor

\maketitle

\begin{abstract}
To a mesh function we associate the natural analogue of the Monge-Amp\`ere measure. The latter is shown to be equivalent to the Monge-Amp\`ere measure of the convex envelope. We prove that the uniform convergence to a bounded convex function
of mesh functions 
implies the uniform convergence on compact subsets of their convex envelopes and hence the weak convergence of the associated Monge-Amp\`ere measures. We also give conditions 
for mesh functions to have a subsequence which converges uniformly to a convex function. Our result can be used to give alternate proofs of the convergence of some discretizations for the second boundary value problem for the Monge-Amp\`ere equation and was used for a recently proposed discretization of the latter. 
For mesh functions which are uniformly bounded and satisfy a convexity condition at the discrete level, we show that there is a subsequence which converges uniformly on compact subsets to a convex function. The convex envelopes of the mesh functions of the subsequence also converge uniformly on compact subsets. If in addition they agree with a continuous convex function on the boundary, the limit function is shown to satisfy the boundary condition strongly. 
 %If we further assume that the mesh functions interpolate boundary values of a continuous convex function,

\keywords{ discrete convex functions \and  weak convergence of measures \and Monge-Amp\`ere measure }
% \PACS{PACS code1 \and PACS code2 \and more}
 \subclass{39A12 \and 35J60 \and 65N12 \and 65M06 }
\end{abstract}

\section{Introduction}

Let $\Omega$ be a nonempty bounded convex domain of $\R^d$ with boundary $\partial \Omega$ and let $u$ be a convex function on $\Omega$. %We consider a sequence of mesh functions $u_h$ which converges uniformly on compact subsets to $u$ as $h \to 0$. The mesh functions $u_h$ are only required to be discrete convex, c.f. Definition \ref{discrete-convex}. 
For a locally integrable function $R$ on $\R^d$ such that $R>0$ on the image of $\Omega$ by the subdifferential of $u$, one associates, through the normal mapping, the $R$-curvature $\omega(R,u,.)$ of the convex function $u$ as a Borel measure. We define the analogous measures $\omega(R,u_h,.)$ for mesh functions $u_h$. We show that this notion of Monge-Amp\`ere measure coincides with the Monge-Amp\`ere measure of the convex envelope $\Gamma(u_h)$ of the mesh function $u_h$. We give in Theorem \ref{main0}, %and \ref{} 
conditions under which the uniform convergence of mesh functions implies the weak convergence of the associated Monge-Amp\`ere measures. Theorem \ref{main0} is key to the proof of convergence of a recently proposed discretization for the second boundary value problem for the Monge-Amp\`ere equation \cite{Awanou-second-sym}. %See also \cite{Cambridge}. 
It can also be used, in the case $R\equiv 1$, to give an alternate proof for the convergence of the discretization proposed by Benamou and Duval in \cite{benamou2017minimal}. The method used in \cite{benamou2017minimal} does not seem to apply to the discretization we proposed in \cite{Awanou-second-sym}. We study the compactness of uniformly bounded discrete convex mesh functions %with Monge-Amp\`ere masses uniformly bounded 
c.f. Definition \ref{mass} %and \ref{disc-cv} 
below. When such mesh functions interpolate boundary values of a continuous convex function, we prove that a uniform limit function satisfies the boundary condition strongly. This result was used in \cite{DiscreteAlex2} to give a proof of convergence for a discretization, proposed by Benamou and Froese in \cite{BenamouFroese2017}, for the Dirichlet problem for a Monge-Amp\`ere equation with right hand side a sum of Dirac masses.

%We then study the interplay between the uniform convergence of mesh functions and the uniform convergence of their convex envelopes.

Our study is motivated by certain discretizations of the Monge-Amp\`ere equation \cite{DiscreteAlex2,Awanou-second-sym}: find a convex function $u$ on $\Omega$ such that
\begin{equation} \label{m1}
\omega(1,u,E) = \mu(E),
\end{equation}
for all Borels sets $E \subset \Omega$ and with appropriate boundary conditions. In \eqref{m1}, $R\equiv1$ and $\mu$ is a finite Borel measure. For illustration, let us assume that $\mu$ is absolutely continuous with density $f \geq 0$ and $f \in C(\tir{\Omega})$. In general a discretization of \eqref{m1} takes the form
\begin{equation} \label{m1d}
M^1_h[u_h] (x) = f(x),
\end{equation}
for a discrete Monge-Amp\`ere operator $M^1_h$, mesh points $x$ and an approximate mesh function $u_h$
which is discrete convex. %satisfies a convexity condition at the discrete level. 
It is then natural to consider the set function 
$$
M^1_h[u_h] (E) = \sum_{x \in E} M^1_h[u_h] (x),
$$
for a Borel set $E$. 
% and give conditions under which $\omega(R,u_h,.)$ weakly converges to $\omega(R,u,.)$ as $h \to 0$. 
An approach for convergence is to determine a subsequence $u_{h_k}$ such that $M^1_{h_k}(u_{h_k}) (E)$ converges to $\omega(1,u,E)$ for all Borel sets $E$ with $\omega(1,u,\partial E)=0$. However, the set function $M^1_h[u_h]$ may not define a Borel measure and this makes such a weak convergence not straightforward.  When $\omega(1,u_h,E) \leq M^1_h[u_h] (E)$ for all Borel sets $E$, a step in proving the convergence of the discretization is to obtain a convergent subsequence for which $\omega(1,u_{h_k},E)$ converges to $\omega(1,u,E)$ for all Borel sets $E$ with $\omega(1,u,\partial E)=0$. 

%If the Monge-Amp\`ere mass of $u_h$ is uniformly bounded, and from the discretization of the boundary condition $u_h$ is equal to its convex envelope on the boundary, 
%For a family of uniformly bounded mesh functions which are uniformly Lipschitz, we show 
We give conditions on the mesh functions $u_h$ such
that there is a subsequence $u_{h_k}$ for which $\Gamma(u_{h_k})$ converges uniformly on compact subsets to a convex function $v$. %Here $\Gamma(u_h)$ denotes a standard convex extension to $\R^d$ of $\Gamma(u_h)$, c.f. section \ref{as}. 
This implies, c.f. Theorem \ref{weak-cvg} below, that $\omega(1,\Gamma(u_{h_k}),.)=\omega(1,u_{h_k},.)$ weakly converges to $\omega(1,v,.)$. %We do not know whether the uniform convergence of $\Gamma(u_{h_k})$ to $v$ implies the uniform convergence of $u_{h_k}$ to $v$. 
%We prove that if $u_h$ converges uniformly to a function $u$ on $\tir{\Omega}$, $\Gamma(u_h)$ converges uniformly on $\tir{\Omega}$ to the convex envelope $\Gamma(u)$ of $u$.  Thus if $u$ were not to be convex, $\Gamma(u_h)$ and $u_h$ will converge to different limits. 
%Next, recall that $u_h$ is discrete convex. 
%It is known that the uniform limit of discrete convex functions is convex. The result is implicit in convergence studies of discretizations of the Monge-Amp\`ere equation.  
We prove that the subsequence $\Gamma(u_{h_k})$ has a further subsequence also denoted $\Gamma(u_{h_k})$ for which $u_{h_k}$ converges uniformly to $v$ on %$\tir{\Omega}$. %This yields the desired weak convergence property.
on compact subsets of $\Omega$.

%The first category of conditions require uniform convergence on $\tir{\Omega}$ of $u_h$ to $u$. The second set of conditions require that the discrete convex mesh functions converge uniformly on compact subsets, interpolate boundary values of a continuous convex function and have Monge-Amp\`ere masses, c.f. Definition \ref{mass} below, uniformly bounded.

%To summarize, we prove in Theorem \ref{main1} that a family of discrete convex functions with Monge-Amp\`ere masses uniformly bounded has a uniformly convergent subsequence for which the associated Monge-Amp\`ere measures weakly converge. 

The paper is organized as follows. In the next section we collect some notation used throughout the paper, recall the notion of $R$-Monge-Amp\`ere measure and introduce our discrete analogue. In section \ref{discrete}, we present the connection with the Monge-Amp\`ere measure of the convex envelope. This leads to weak convergence results for our discretization of the normal mapping.  In section \ref{interplay} we discuss the interplay between the uniform convergence of mesh functions and the uniform convergence of their convex envelopes. %The main results Theorem \ref{main1} on convergence properties of a subsequence and Theorem \ref{main2} on the Dirichlet boundary condition are proved in the last section.
Compactness of mesh functions is discussed in section \ref{compact}. The treatment of Dirichlet boundary values is reduced to the convergence  of a finite difference scheme to the viscosity solution of the Laplace equation on a bounded Lipschitz domain. This requires a delicate treatment of barriers done in a recent work \cite{del2018convergence} which we review in an appendix for the convenience of the reader.

\section{Preliminaries}  \label{as}
For $x \in \R^d$ and $S \subset \R^d$ we denote by $d(x,S)$ the distance of $x$ to $S$, by $\diam(S)$ the diameter of $S$ and by $d(x,\partial \Omega)$ the distance of $x$ to $\partial \Omega$. For $x \in \R^d$, $||x||$ denotes the Euclidean norm of $x$. %The interior of the set $S$ is denoted by $S^{\circ}$.
For a differentiable function $v$ at $x \in \R^d$, we denote by $D v(x)$ its gradient at $x$.

Let $h$ be a small positive parameter and let
$$
\mathbb{Z}^d_h = \{\, m h, m \in \mathbb{Z}^d \, \},
$$
denote the orthogonal lattice with mesh length $h$. %We denote by $\mathcal{U}_h$ the linear space of mesh functions, i.e. real-valued functions defined on $\tir{\Omega} \cap \mathbb{Z}^d_h$. 
Let also $(r_1, \ldots, r_d)$ denote the canonical basis of $\R^d$.
We define
\begin{equation*} %\label{interior-domain}
\Omega_h  = \Omega \cap \mathbb{Z}_h^d.
\end{equation*}
For a function $u \in C(\Omega)$ %we define its restriction $r_h(w)$ to $\Omega_h$ by
%$$r_h(w) (x) = w(x), x \in \Omega \cap \mathbb{Z}_h.$$
%The restriction of $w \in C(\tir{\Omega})$ to $\partial \Omega_h$ is defined similarly.
its restriction on $\Omega_h$ is also denoted $u$ by an abuse of notation. For $x \in \Omega_h$ and $ e \in \mathbb{Z}^d$ let
$$
h^e_x = \sup \{ \, r h, r \in [0,1] \ \text{ and } \ x+rh e \in \tir{\Omega} \, \}.
$$
Next, let $V \subset \mathbb{Z}^d$ such that $\{ \, r_1,\ldots,r_d\, \} \subset V$ and such that for $e \in V$, $-e \in V$. Furthermore, we assume that for each $e \in V$, one can find a basis $\{ \, e_1,\ldots,e_d\, \}$ of $ \mathbb{Z}^d$ such that $e_1=e$. We define 
\begin{equation} \label{boundary-domain}
\partial \Omega_h = \{ \, x \in \partial \Omega,  \exists y \in \Omega_h  \ \text{and} \ e \in V \text{ such that } x= y + h^e_y e   \,  \}.
\end{equation}
For the convergence study in this paper, we want $V \to \mathbb{Z}^d$. Thus, later we will simply choose $V=\mathbb{Z}^d$. 
As in \cite{Nochetto19,NeilanZhang19}  we denote by $\mathcal{N}_h$ the set of nodes, i.e. $\mathcal{N}_h=\Omega_h \cup \partial \Omega_h$. Let $\Conv (S)$ denote the convex hull of the set $S$. Since $\Omega$ is convex and $V$ contains the elements of the canonical basis of $\R^d$, $\mathcal{N}_h \cap \partial \Conv (\mathcal{N}_h) = \partial \Omega_h$.  

We let  $\mathcal{U}_h$ denote the linear space of mesh functions, i.e. real-valued functions defined on $\mathcal{N}_h$.

\subsection{R-curvature of convex functions} \label{as}
The material in %the remaining part of 
this subsection is mostly taken from  \cite{Bakelman1994,Guti'errez2001} to which we refer for proofs. Let $\Omega$ be an open subset of $\R^d$ and let us denote by $\mathcal{P}(\R^d)$ the set of subsets of $\R^d$. 
\begin{definition}
Let $u: \Omega \to \R$.  The normal mapping of $u$, or subdifferential of $u$ is the set-valued mapping $\partial u: \Omega \to \mathcal{P}(\R^d)$ defined by
\begin{align} \label{normal-mapping}
\partial u (x_0) = \{ \, p \in \R^d: u(x) \geq u(x_0) + p \cdot (x-x_0), \, \text{ for all } \, x \in \Omega\,\}.
\end{align}
\end{definition}
Let $|E|$ denote the Lebesgue measure of the measurable subset $E \subset \Omega$. 
For $E \subset \Omega$, we define 
$$
\partial u(E) = \cup_{x \in E} \partial u(x).
$$ 

\begin{theorem}[\cite{Guti'errez2001} Theorem 1.1.13]
If $u$ is continuous on $\Omega$, the class
 \begin{align*}
\mathcal{S} = \{\, E \subset \Omega, \partial u(E) \, \text{is Lebesgue measurable}\, \},
\end{align*}
is a Borel $\sigma$-algebra. 
\end{theorem}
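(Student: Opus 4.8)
The plan is to verify the three $\sigma$-algebra axioms for $\mathcal{S}$ and, on top of that, the inclusion of every Borel subset of $\Omega$, while isolating the two analytic facts on which everything rests: that the normal mapping carries compact sets to compact (hence measurable) sets, and that the set of ``doubly covered'' slopes is Lebesgue-null. Granting these two facts, the rest of the argument is essentially set-theoretic bookkeeping, so I would establish them first.

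\textbf{Two lemmas.} First I would show that $\partial u(K)$ is compact whenever $K \subset \Omega$ is compact. For boundedness, if $p \in \partial u(x_0)$ with $x_0 \in K$, then choosing $t>0$ so small that $x_0 \pm t r_i \in \Omega$ uniformly for $x_0 \in K$ and inserting $x = x_0 \pm t r_i$ into \eqref{normal-mapping} gives $|p_i| \le t^{-1}(\max_{K'} u - \min_{K'} u)$ on a slightly larger compact set $K'$, which is finite by continuity of $u$. For closedness, if $p_n \in \partial u(x_n)$ with $x_n \in K$ and $p_n \to p$, then a subsequence has $x_n \to x_0 \in K$, and passing to the limit in $u(x) \ge u(x_n) + p_n\cdot(x-x_n)$ using continuity of $u$ yields $p \in \partial u(x_0)$. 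Hence $\partial u(K)$ is compact, so every compact subset of $\Omega$ lies in $\mathcal{S}$. Second, and this is the crux, I would prove that
$$
S = \{\, p \in \R^d : p \in \partial u(x_1)\cap \partial u(x_2) \ \text{for some}\ x_1 \neq x_2 \,\}
$$
has Lebesgue measure zero. The natural route is to pass to the Legendre conjugate $u^{*}$: the Fenchel relation gives $p \in \partial u(x) \Rightarrow x \in \partial u^{*}(p)$, so $p \in S$ forces $\partial u^{*}(p)$ to contain two distinct points, i.e. $u^{*}$ fails to be differentiable at $p$. Since $u^{*}$ is convex and a convex function is differentiable almost everywhere, its non-differentiability set is Lebesgue-null, whence $|S| = 0$. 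This is the step where convexity of $u$ enters.

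\textbf{Assembling the $\sigma$-algebra.} Exhausting $\Omega$ by compacta $\Omega = \bigcup_n K_n$ gives $\partial u(\Omega) = \bigcup_n \partial u(K_n)$, an $F_\sigma$ set, so $\Omega \in \mathcal{S}$. Countable unions are immediate because the normal mapping commutes with unions, $\partial u(\bigcup_n E_n) = \bigcup_n \partial u(E_n)$. For complements, given $E \in \mathcal{S}$ I would use the decomposition
$$
\partial u(\Omega\setminus E) = \big(\partial u(\Omega)\setminus \partial u(E)\big) \cup \big(\partial u(\Omega\setminus E)\cap \partial u(E)\big),
$$
where the first set is measurable as a difference of measurable sets, and the second is contained in $S$, hence Lebesgue-null and therefore measurable by completeness of Lebesgue measure; thus $\Omega\setminus E \in \mathcal{S}$. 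This shows $\mathcal{S}$ is a $\sigma$-algebra, and since it contains every compact set it contains every open set and therefore the entire Borel $\sigma$-algebra. I expect the null-set lemma $|S|=0$ to be the main obstacle: once it is in hand, the complement step — the only place where measurability could genuinely fail — goes through, and everything else reduces to the elementary commutation and exhaustion identities above.
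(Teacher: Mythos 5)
Your proof is correct and follows essentially the same route as the proof the paper relies on by citation (Guti\'errez, Theorem 1.1.13, which rests on Lemma 1.1.12): compactness of $\partial u(K)$ for compact $K\subset\Omega$, the Legendre-conjugate argument showing that the set of doubly covered slopes is Lebesgue-null, and the decomposition of $\partial u(\Omega\setminus E)$ to handle complements. One small correction: convexity of $u$ never actually enters --- the theorem assumes only continuity, and your null-set argument uses only convexity of the conjugate $u^*$, which holds automatically since $u^*$ is a supremum of affine functions, so your proof is valid in exactly the stated generality.
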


Let $R$ be a locally integrable function on $\R^d$ such that $R>0$ on $\partial u(\Omega)$.  Without loss of generality, we will assume that $R=0$ on $\R^d \setminus \partial u(\Omega)$.
The $R$-curvature of the convex function $u$ is defined as the set function
$$
\omega(R,u,E) = \int_{\partial u(E)} R(p) \ud p,
$$
and can be shown to be a Radon measure on $\mathcal{S}$ \cite[Theorem 1.1.13]{Guti'errez2001}. The set function $\omega(R,u,.)$ is also referred to as the $R$-Monge-Amp\`ere measure associated with the convex function $u$.

\begin{definition}
A sequence $\mu_n$ of Borel measures converges to a Borel measure $\mu$ if and only if $\mu_n(B) \to \mu(B)$ for any Borel set $B \subset \Omega$ with $\mu(\partial B)=0$.
\end{definition}

We note that there are several equivalent definitions of weak convergence of measures which can be found for example in \cite[Theorem 1, section 1.9]{Evans-Gariepy}. It is known that the uniform limit of convex functions is convex. We have \cite[Theorem 9.1]{Bakelman1994}

\begin{theorem} \label{weak-cvg}
Let $u_n$ be a sequence of convex functions on $\Omega$ such that $u_n$ converges to $u$ uniformly on compact subsets of $\Omega$, then $\omega(R,u_n,.)$ weakly converges to $\omega(R,u,.)$.

\end{theorem}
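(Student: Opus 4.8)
The plan is to establish weak convergence through the two one-sided (Portmanteau) estimates
\[
\limsup_{n} \omega(R,u_n,K) \le \omega(R,u,K) \quad\text{for every compact } K\subset\Omega,
\]
\[
\liminf_{n} \omega(R,u_n,U) \ge \omega(R,u,U) \quad\text{for every open } U\subset\Omega,
\]
and then to combine them by monotonicity: for a Borel set $B$ with $\omega(R,u,\partial B)=0$ one sandwiches $\omega(R,u_n,B)$ between $\omega(R,u_n,\operatorname{int}B)$ and $\omega(R,u_n,\overline{B})$, passes to the limit, and uses $\omega(R,u,\operatorname{int}B)=\omega(R,u,\overline{B})$. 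Throughout I write $\nu$ for the measure $R\,dp$; since $R\in L^1_{\mathrm{loc}}(\R^d)$ and $R\ge 0$, $\nu$ is a nonnegative Radon measure on $\R^d$, hence outer regular on Borel sets, and $\omega(R,v,E)=\nu(\partial v(E))$ for every convex $v$. The limit $u$ is convex, so all quantities are defined.

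For the compact estimate I first record a uniform Lipschitz bound: given $K$, choose a slightly larger compact $K'\subset\Omega$; uniform convergence makes $\sup_n\sup_{K'}|u_n|$ finite, and convex functions with a common sup bound on $K'$ are uniformly Lipschitz on $K$, so the slopes satisfy $\partial u_n(K)\subset \overline{B}(0,L)$ for some $L$ independent of $n$. Next I prove the upper semicontinuity $\limsup_n \partial u_n(K)\subset \partial u(K)$: if $p_{n_k}\in\partial u_{n_k}(x_{n_k})$ with $x_{n_k}\in K$ and $p_{n_k}\to p$, then after extracting $x_{n_k}\to x_0\in K$ one passes to the limit in $u_{n_k}(y)\ge u_{n_k}(x_{n_k})+p_{n_k}\cdot(y-x_{n_k})$ using the uniform convergence, obtaining $p\in\partial u(x_0)$. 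In particular $\partial u(K)$ is compact. Given $\varepsilon>0$, outer regularity of $\nu$ furnishes an open $G\supset\partial u(K)$ with $\nu(G)<\nu(\partial u(K))+\varepsilon$; boundedness together with upper semicontinuity forces $\partial u_n(K)\subset G$ for all large $n$ (otherwise a bounded sequence of escaping slopes would subconverge to a point of $\partial u(K)\setminus G$), whence $\omega(R,u_n,K)=\nu(\partial u_n(K))\le\nu(G)<\omega(R,u,K)+\varepsilon$.

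For the open estimate I use the Aleksandrov fact that the set $Z$ of slopes $p$ belonging to $\partial u(x)$ for more than one $x$ has Lebesgue measure zero, so $\nu(Z)=0$. Fix $p\in\partial u(U)\setminus Z$; its unique touching point $x_0$ lies in $U$ and is the unique minimizer over $\Omega$ of the convex function $g(x)=u(x)-p\cdot x$. Choosing a closed ball $\overline{B}(x_0,r)\subset U$, the minimum of $g$ on the sphere exceeds $g(x_0)$ by some $\delta>0$; since $g_n(x)=u_n(x)-p\cdot x$ converges uniformly to $g$ on $\overline{B}(x_0,r)$, for large $n$ the value $g_n(x_0)$ is strictly below $\min_{\partial B(x_0,r)} g_n$, so $g_n$ attains its minimum over the ball at an interior point $x_n^\ast$, giving $0\in\partial g_n(x_n^\ast)$, i.e. $p\in\partial u_n(U)$. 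Thus $\mathbf 1_{\partial u_n(U)}\to 1$ pointwise on $\partial u(U)\setminus Z$, and Fatou's lemma (using $R\ge0$) yields $\liminf_n\omega(R,u_n,U)\ge\nu(\partial u(U)\setminus Z)=\omega(R,u,U)$.

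I expect the open-set lower bound to be the main obstacle: it is where the measure-theoretic input (the null set of multiply-attained slopes, and $\nu\ll$ Lebesgue so that $\nu(Z)=0$) must be combined with the variational argument identifying a regular limit slope with an interior minimizer of $g_n$, whereas the compact upper bound is a fairly routine consequence of upper semicontinuity and outer regularity. A secondary technical point to handle with care is the behavior near $\partial\Omega$ when sandwiching a general Borel set $B$, which is why I phrase the two estimates for compact and open subsets of $\Omega$ and invoke $\omega(R,u,\partial B)=0$ only at the final step.
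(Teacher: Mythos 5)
The paper itself contains no proof of this theorem: it is quoted directly from Bakelman \cite[Theorem 9.1]{Bakelman1994}, with the definition of weak convergence referred to \cite[Theorem 1, section 1.9]{Evans-Gariepy}. So there is no internal argument for your proposal to match; what you have written is a self-contained proof of a cited result, and it is essentially correct. Your two estimates follow the standard route: the compact upper bound via the uniform local Lipschitz bound, upper semicontinuity of subdifferentials under locally uniform convergence, and outer regularity of $\nu=R\,dp$; and the open lower bound via the Aleksandrov null set $Z$ of multiply attained slopes combined with the variational argument that a slope $p\notin Z$ touching $u$ at $x_0\in U$ forces an interior minimizer of $g_n=u_n-p\cdot x$ in a small ball, finished by Fatou's lemma. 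This is the same circle of ideas as \cite[Lemmas 1.2.2 and 1.2.3]{Guti'errez2001}, which the paper itself invokes later in the proofs of Lemmas \ref{lem-weak01} and \ref{lem-weak02}; your Fatou step is precisely what extends the $R\equiv 1$ case treated there to a general locally integrable $R\geq 0$, using $\nu\ll$ Lebesgue measure so that $\nu(Z)=0$.

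One point that you only flag must actually be imposed: the final sandwich requires $\overline{B}\subset\Omega$ compact, i.e.\ the Borel sets in the weak-convergence definition must be compactly contained in $\Omega$. Without this, $\omega(R,u_n,\overline{B}\cap\Omega)$ is not controlled by your compact estimate, and in fact the conclusion is false for boundary-touching sets: on $\Omega=(0,1)$ with $R\equiv 1$, the convex functions $u_n(x)=n\max(0,\,x-1+1/n)$ converge locally uniformly to $u=0$, yet for $B=(1/2,1)$ one has $\omega(1,u,\partial B\cap\Omega)=0$ while $\omega(1,u_n,B)=n\to\infty\neq 0=\omega(1,u,B)$, the mass escaping to $\partial\Omega$. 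So the restriction to precompact $B$ is not a removable technicality but part of the correct statement; it is exactly the ``bounded Borel set'' hypothesis in the Evans--Gariepy criterion the paper cites, and hence the intended reading of the paper's definition. With that reading made explicit, your argument is complete.
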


We will make use of the following observation

\begin{lemma} \label{inter-meas}
Let $\mu$ be a Borel measure and $H_i, i=1\ldots,N$ be a sequence of Borel sets with pairwise intersection of $\mu$-measure zero. Then $\mu(\cup_{i=1}^N H_i)=\sum_{i=1}^N \mu(H_i)$.

\end{lemma}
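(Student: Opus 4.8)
The plan is to reduce the statement to ordinary finite additivity by disjointifying the family $H_i$ in the standard way. First I would set $G_1 = H_1$ and, for $2 \leq i \leq N$,
$$
G_i = H_i \setminus (H_1 \cup \cdots \cup H_{i-1}).
$$
Each $G_i$ is Borel, the $G_i$ are pairwise disjoint by construction, and one checks directly that $\cup_{i=1}^N G_i = \cup_{i=1}^N H_i$. Hence finite additivity of the measure $\mu$ gives
$$
\mu\Big(\cup_{i=1}^N H_i\Big) = \mu\Big(\cup_{i=1}^N G_i\Big) = \sum_{i=1}^N \mu(G_i).
$$

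The second step is to show that passing from $G_i$ to $H_i$ costs nothing, i.e. $\mu(G_i) = \mu(H_i)$ for every $i$. Since $G_i \subset H_i$, I would write $H_i$ as the disjoint union $H_i = G_i \cup (H_i \setminus G_i)$, so that $\mu(H_i) = \mu(G_i) + \mu(H_i \setminus G_i)$. The point is that the leftover set is covered by pairwise intersections: indeed
$$
H_i \setminus G_i = H_i \cap (H_1 \cup \cdots \cup H_{i-1}) = \cup_{j < i} (H_i \cap H_j).
$$
By hypothesis each $H_i \cap H_j$ has $\mu$-measure zero, so finite subadditivity yields $\mu(H_i \setminus G_i) = 0$, whence $\mu(G_i) = \mu(H_i)$. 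Substituting into the previous display gives the claimed identity $\mu(\cup_{i=1}^N H_i) = \sum_{i=1}^N \mu(H_i)$.

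There is no genuine obstacle here; the result is a routine consequence of additivity. The only point requiring any care is the identification of the leftover $H_i \setminus G_i$ with the union $\cup_{j<i}(H_i \cap H_j)$ of pairwise intersections, after which finite subadditivity of $\mu$ closes the argument. (An induction on $N$, or an inclusion–exclusion estimate, would give an equally short alternative, but the disjointification is the cleanest route.)
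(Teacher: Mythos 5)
Your proof is correct and follows essentially the same route as the paper's: both disjointify the family as $G_i = H_i \setminus (H_1 \cup \cdots \cup H_{i-1})$, show via subadditivity that the discarded piece $H_i \cap (H_1 \cup \cdots \cup H_{i-1}) \subset \cup_{j<i}(H_i \cap H_j)$ has $\mu$-measure zero, and conclude by finite additivity. Your write-up is if anything slightly more explicit in stating $\mu(G_i)=\mu(H_i)$ as a separate step, but there is no substantive difference.
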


\begin{proof} We have
$\cup_{i=1}^N H_i = H_1 \cup (H_2 \setminus H_1) \cup (H_3 \setminus (H_2 \cup H_1) ) \cup \ldots,$
with the sets on the right hand side disjoints. Moreover
$$
H_j = [H_j \cap (H_{j-1} \cup H_{j-2} \cup \ldots \cup H_1)] \cup [H_j \setminus (H_{j-1} \cup H_{j-2} \cup \ldots \cup H_1)].
$$
But $\mu(H_j \cap (H_{j-1} \cup H_{j-2} \cup \ldots \cup H_1) \leq \mu \big(\cup_{k=1}^{j-1} H_j \cap H_k \big) \leq \sum_{k=1}^{j-1} \mu(H_j \cap H_k) =0$ and hence
$$
\mu(H_j)  = \mu(H_j \setminus (H_{j-1} \cup H_{j-2} \cup \ldots \cup H_1).
$$
This implies that $\mu(\cup_{i=1}^N H_i ) = \sum_{i=1}^N \mu(H_i)$. 
\Qed
\end{proof}

It will be convenient to consider an extension to $\R^d$ of a convex function on $\Omega$ with the same subdifferential when $\partial u(\Omega)$ is convex. The construction is standard and can be found for example on \cite[section 2]{Caffarelli92a}. %\cite[p. 157]{ChouWang95}. 
If $u$ is a convex function on $\Omega$, we extend $u$ to $\R^d$ using
\begin{equation} \label{extension}
%\tilde{u}(x) = \inf \{ \, y \in \Omega, u(y) + \sup_{q \in  \Omega^*} (x-y) \cdot q\, \}.
\tilde{u}(x) = \sup \{ \,   u(y) + q \cdot (x-y), y \in \Omega, q \in \partial u(y) \, \}.
\end{equation}
We denote by $\chi_{u}$ the subdifferential of the extended function on $\R^d$. 

A convex function $v$ on $\R^d$ is said to be proper if $v(x_0) < \infty$ for some $x_0 \in \R^d$ and $v(x)>-\infty$ for all $x \in \R^d$. We will often use the following lemma \cite[Corollary 6.10]{Arutyunov} and \cite[Theorem 9.9]{Arutyunov}

\begin{lemma} \label{continuity-convex}
A proper convex function $v$ which is bounded above on $\Omega$ is continuous on  $\Omega$. Moreover  for all $x\in \Omega$, $\partial v(x)\neq \emptyset$.  
\end{lemma}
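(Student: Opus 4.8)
The plan is to establish both assertions by the classical convex-analysis arguments, using crucially that $\Omega$ is open, so that every point of $\Omega$ is an interior point at which a full ball fits inside $\Omega$.

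For the continuity claim I would first upgrade the hypothesis ``bounded above on $\Omega$'' to ``locally bounded below,'' and then deduce local Lipschitz continuity. Fix $x_0 \in \Omega$ and choose $r>0$ with $B(x_0,r) \subset \Omega$ and $v \le M$ on $B(x_0,r)$. To produce a lower bound I would use a reflection argument: for $y \in B(x_0,r)$ the reflected point $z = 2x_0 - y$ also lies in $B(x_0,r)$, and applying convexity to $x_0 = \tfrac12(y+z)$ gives $v(x_0) \le \tfrac12(v(y)+v(z)) \le \tfrac12(v(y)+M)$, whence $v(y) \ge 2v(x_0)-M$. Thus $v$ is bounded, say $|v| \le M'$, on $B(x_0,r)$. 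Local Lipschitz continuity on the smaller ball $B(x_0,r/2)$ then follows from a one-dimensional estimate along segments: given distinct $x,y \in B(x_0,r/2)$, push $y$ past itself to the point $w = y + \tfrac{r}{2\|y-x\|}(y-x) \in B(x_0,r)$, write $y = (1-\lambda)x + \lambda w$ with $\lambda = \|y-x\|/(\|y-x\|+r/2) \le 2\|y-x\|/r$, and bound $v(y)-v(x) \le \lambda\big(v(w)-v(x)\big) \le (4M'/r)\,\|y-x\|$. Symmetry in $x,y$ yields the Lipschitz bound, hence continuity at $x_0$; since $x_0$ was arbitrary, $v$ is continuous on $\Omega$.

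For nonemptiness of the subdifferential I would apply the supporting hyperplane theorem to the epigraph $\mathrm{epi}(v) \subset \R^{d+1}$, which is convex. The point $(x_0,v(x_0))$ is a boundary point, so there is a nonzero functional $(a,b)$ with $a\cdot(x-x_0)+b\,(t-v(x_0)) \le 0$ for all $(x,t)\in\mathrm{epi}(v)$. Letting $t\to+\infty$ forces $b \le 0$, and the decisive step is ruling out $b=0$: if $b=0$ then $a\cdot(x-x_0)\le 0$ for all $x$ in a full neighborhood of $x_0$, which forces $a=0$ and contradicts $(a,b)\neq 0$. Hence $b<0$; normalizing $b=-1$ and taking $t=v(x)$ gives $v(x) \ge v(x_0)+a\cdot(x-x_0)$, i.e. $a \in \partial v(x_0)$.

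I expect the two places requiring care to be the reflection step that converts an upper bound into a genuine two-sided bound, and the non-verticality of the supporting hyperplane. Both hinge on $x_0$ lying in the interior of $\Omega$, and the continuity established in the first part is exactly what guarantees that $\mathrm{epi}(v)$ is a genuine convex body near $(x_0,v(x_0))$, so that the supporting functional cannot be vertical.
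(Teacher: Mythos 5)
Your proof is correct. Note that the paper does not prove this lemma at all: it cites it as \cite[Corollary 6.10]{Arutyunov} and \cite[Theorem 9.9]{Arutyunov}, so there is no internal argument to compare against, and what you have written is the standard self-contained convex-analysis proof that those citations stand for. All three steps check out: the reflection bound $v(y)\ge 2v(x_0)-M$ is valid because $z=2x_0-y$ stays in $B(x_0,r)$, and it is precisely properness that makes this lower bound finite (this is the only place the hypothesis ``proper'' is used, and it is worth saying so explicitly, since otherwise a reader may wonder why it appears in the statement); the segment-stretching estimate with $w=y+\tfrac{r}{2\|y-x\|}(y-x)$ and $\lambda\le 2\|y-x\|/r$ correctly yields the Lipschitz bound $|v(y)-v(x)|\le (4M'/r)\|y-x\|$ on $B(x_0,r/2)$; and the supporting-hyperplane argument at the boundary point $(x_0,v(x_0))$ of $\mathrm{epi}(v)$, with verticality ruled out by interiority of $x_0$, produces exactly the inequality $v(x)\ge v(x_0)+a\cdot(x-x_0)$ for all $x\in\Omega$, which is the paper's definition \eqref{normal-mapping} of $a\in\partial v(x_0)$. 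One small correction to your closing remark: ruling out $b=0$ does not actually need the continuity from the first part, only that $v$ is finite on a ball around $x_0$ (so that $(x,v(x))\in\mathrm{epi}(v)$ there), which holds directly from ``proper and bounded above''; the two halves of the lemma are therefore logically independent, though proving continuity first is of course harmless. The benefit of your route is that it makes the paper self-contained on a lemma that is invoked repeatedly (for $\Gamma(u_h)$, for extensions, and for limits of mesh functions) and makes visible exactly where properness and the openness of $\Omega$ enter.
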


\begin{lemma} \label{extension-convex}
The extension $\tilde{u}$ of the convex function $u$ on $\Omega$ defined by \eqref{extension} is convex on $\R^d$ and for all $x \in \Omega$, $\partial u (x) = \chi_u ( x)$. If $u$ is a proper convex function bounded on $\Omega$ with $\partial u(\Omega)$ bounded, then $\tilde{u}$ is continuous on $\R^d$. For $u \in C(\tir{\Omega})$ convex, we have $\tilde{u}=u$ on $\tir{\Omega}$.
\end{lemma}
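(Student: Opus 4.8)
The plan is to verify the four assertions in the order stated, each reducing to elementary convex analysis once the correct affine minorants are identified. Convexity of $\tilde{u}$ is immediate from \eqref{extension}: $\tilde{u}$ is a pointwise supremum of the affine functions $z \mapsto u(y) + q\cdot(z-y)$ indexed by the pairs $(y,q)$ with $y \in \Omega$ and $q \in \partial u(y)$. Since $u$ is a finite convex function on the open set $\Omega$, each $\partial u(y)$ is nonempty (standard for finite convex functions on open sets, cf. Lemma \ref{continuity-convex}), so the indexing family is nonempty and $\tilde{u} > -\infty$ everywhere; a supremum of affine functions is then convex and lower semicontinuous.

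Next I would establish $\tilde{u} = u$ on $\Omega$, which simultaneously yields $\partial u(x) = \chi_u(x)$. For $x \in \Omega$ the subgradient inequality $u(x) \geq u(y) + q\cdot(x-y)$ holds for every admissible pair $(y,q)$, so $\tilde{u}(x) \leq u(x)$; choosing $y = x$ together with $q \in \partial u(x)$ gives the reverse inequality, hence $\tilde{u}(x) = u(x)$. The identity of subdifferentials then follows from \eqref{normal-mapping}: if $q \in \partial u(x)$, the affine function $z \mapsto u(x) + q\cdot(z-x)$ is one of the functions appearing in \eqref{extension}, so it lies below $\tilde{u}$ and touches it at $x$, giving $q \in \chi_u(x)$; conversely, any $q \in \chi_u(x)$ supports $\tilde{u}$ at $x$, and restricting that supporting inequality to $\Omega$, where $\tilde{u} = u$, returns $q \in \partial u(x)$.

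For continuity on $\R^d$ under the boundedness hypotheses I would produce a finiteness estimate. With $|u| \leq C$ on $\Omega$, $\|q\| \leq M$ for $q \in \partial u(\Omega)$, and $\Omega$ bounded, every affine term is bounded above by $C + M\|x-y\| \leq C + M(\|x\| + \diam(\Omega))$, so $\tilde{u}$ is finite everywhere and bounded above on each ball. A proper convex function that is bounded above on an open set is continuous there by Lemma \ref{continuity-convex}; applying this on a ball about each point and using that continuity is local gives continuity on all of $\R^d$.

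The last and most delicate point is $\tilde{u} = u$ on $\tir{\Omega}$ for $u \in C(\tir{\Omega})$; the difficulty is that $\partial u(\Omega)$ need not be bounded near $\partial\Omega$, so the supporting slopes may blow up at the boundary. The upper bound $\tilde{u}(x) \leq u(x)$ for $x \in \partial\Omega$ follows by applying the subgradient inequality at interior points $z$ and letting $z \to x$, using continuity of $u$ up to the boundary. For the lower bound I would fix $x_0 \in \Omega$, set $z_t = (1-t)x_0 + tx \in \Omega$ for $t \in (0,1)$, and pick $q_t \in \partial u(z_t)$. The definition \eqref{extension} applied to the pair $(z_t, q_t)$ gives $\tilde{u}(x) \geq u(z_t) + q_t\cdot(x - z_t)$, while the subgradient inequality at $z_t$ evaluated at the interior point $x_0$ gives $q_t\cdot(x - x_0) \geq (u(z_t) - u(x_0))/t$. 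Since $x - z_t = (1-t)(x - x_0)$, combining these yields $\tilde{u}(x) \geq u(z_t) + (1-t)(u(z_t) - u(x_0))/t$, and letting $t \to 1^-$ sends the right-hand side to $u(x)$. I expect this limiting step to be the main obstacle: the factor $(1-t)$ must annihilate a slope that may tend to infinity, and the convexity bound $q_t\cdot(x - x_0) \geq (u(z_t) - u(x_0))/t$ is precisely what keeps that product under control.
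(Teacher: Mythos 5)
Your proof is correct, and for the final assertion it takes a genuinely different — and in fact more robust — route than the paper. For the first three claims (convexity, $\tilde{u}=u$ and $\partial u=\chi_u$ on $\Omega$, continuity of $\tilde{u}$ on $\R^d$ under the boundedness hypotheses) your argument coincides with the paper's: same affine minorants, same touching-hyperplane argument for the subdifferentials, and the same reduction of continuity to Lemma \ref{continuity-convex} via properness and local boundedness above, which you spell out slightly more explicitly than the paper does (your only loose point is citing Lemma \ref{continuity-convex} for the nonemptiness of $\partial u(y)$; that lemma assumes $u$ bounded above on $\Omega$, so it should be applied on a small ball about $y$, which is the standard local argument you clearly intend). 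The divergence is in the last claim. The paper proves $\tilde{u}=u$ on $\tir{\Omega}$ by asserting that for $u\in C(\tir{\Omega})$ convex the extension $\tilde{u}$ is continuous on $\R^d$ and concluding by density; but that continuity was established only under the additional hypothesis that $\partial u(\Omega)$ is bounded, which does not follow from $u\in C(\tir{\Omega})$: for $u(x)=-\sqrt{1-\|x\|^2}$ on the unit ball, $\partial u(\Omega)$ is unbounded and $\tilde{u}\equiv+\infty$ outside $\tir{\Omega}$, so $\tilde{u}$ is not continuous on $\R^d$, although the conclusion $\tilde{u}=u$ on $\tir{\Omega}$ still holds. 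Your direct argument — the upper bound by letting interior points tend to the boundary in the subgradient inequality, and the lower bound along the segment $z_t=(1-t)x_0+tx$, where the convexity estimate $q_t\cdot(x-x_0)\geq (u(z_t)-u(x_0))/t$ lets the factor $(1-t)$ absorb a possibly unbounded slope — needs no hypothesis on $\partial u(\Omega)$ and therefore proves the fourth assertion in the stated generality, repairing this gap. What the paper's route buys, when its extra hypothesis holds, is brevity: once $\tilde{u}$ is known to be continuous on $\R^d$, the boundary identity is immediate.
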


\begin{proof}
Note that $\tilde{u}$ is convex as a supremum of convex functions and $\tilde{u}=u$ on $\Omega$ since for $x, y \in \Omega$ and $q \in \partial u(y)$, $u(x) \geq u(y)+q \cdot (y-x)$ with equality at $x=y$.

Next,  for $x \in \Omega$,  since $\tilde{u}=u$ on $\Omega$, we immediately get $ \chi_u ( x) \subset \partial u (x)$. Let $p \in \partial u (x_0), x_0 \in \Omega$. %We have for all $x \in \Omega$, 
%$\tilde{u}(x) = u(x) \geq u(x_0) + p \cdot (x-x_0)=\tilde{u}(x_0) + p \cdot (x-x_0)$. If $x \in \R^d\setminus \Omega$, 
we have by definition of $\tilde{u}$, $\tilde{u}(x) \geq u(x_0) + p \cdot (x-x_0)=\tilde{u}(x_0) + p \cdot (x-x_0)$ for all $x \in \R^d$.
Thus $ \partial u (x_0) \subset \chi_u(x_0)$. This proves that for all $x \in \Omega$, $\partial u (x) = \chi_u ( x)$.

%Since $\tilde{u}=u$ on $\Omega$, $\tilde{u}$ is a proper convex function if $u$ is proper and convex. 

Let $x \in \R^d$ and $U$ a bounded open set such that $x \in U$. Put $L(x) = u(y)+ q \cdot (x-y)$ for $y \in \Omega$ and $q \in \partial u(y)$. If $u$ is bounded on $\Omega$ with $\partial u(\Omega)$ bounded, $L$ is bounded from below on $U$ and so is $\tilde{u}$. Since $\tilde{u}$ is an extension of $u$, $\tilde{u}(x_0) <\infty$ for some $x_0 \in \Omega$. This implies that $\tilde{u}$ is a proper convex function. We conclude by Lemma \ref{continuity-convex} that for $u$ proper, convex and bounded on $\Omega$, $\tilde{u}$ is continuous on $\R^d$.

If $u \in C(\tir{\Omega})$ is convex, it is proper, convex and bounded on $\Omega$. Thus $\tilde{u}=u$ on $\tir{\Omega}$ since 
$\tilde{u}$ is continuous on $\R^d$. 

\Qed
\end{proof}

\begin{lemma} \label{ext-subd}
Let $u$ be a proper bounded convex  function on $\Omega$.
Assume that $\Omega^* = \partial u (\Omega)$ is bounded. Then $\tir{\Omega^*} \subset  \chi_u ( \tir{\Omega}) \subset \chi_u(\R^d) \subset \Conv ( \tir{\Omega^*})$.  
\end{lemma}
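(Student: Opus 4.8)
The plan is to prove the chain by treating its three inclusions separately, relying on the fact that under the stated hypotheses ($u$ proper, bounded and convex on $\Omega$, with $\Omega^{*}=\partial u(\Omega)$ bounded) Lemma~\ref{extension-convex} guarantees two things: $\tilde u$ is continuous on $\R^{d}$, and $\chi_u(x)=\partial u(x)$ for every $x\in\Omega$. In particular $\chi_u(\Omega)=\partial u(\Omega)=\Omega^{*}$. The middle inclusion $\chi_u(\tir{\Omega})\subset\chi_u(\R^{d})$ is then immediate from $\tir{\Omega}\subset\R^{d}$, since $\bigcup_{x\in\tir{\Omega}}\chi_u(x)\subset\bigcup_{x\in\R^{d}}\chi_u(x)$.

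For the first inclusion $\tir{\Omega^{*}}\subset\chi_u(\tir{\Omega})$, I would show that $\chi_u(\tir{\Omega})$ is closed; since it already contains $\chi_u(\Omega)=\Omega^{*}$, this suffices. Let $p\in\tir{\Omega^{*}}$ and choose $p_{n}\in\Omega^{*}$ with $p_{n}\to p$, together with witnesses $x_{n}\in\Omega$ such that $p_{n}\in\chi_u(x_{n})$. Because $\Omega$ is bounded, $\tir{\Omega}$ is compact, so a subsequence $x_{n_{k}}$ converges to some $x_{*}\in\tir{\Omega}$. Passing to the limit in the subgradient inequality
\[
\tilde u(z)\ge\tilde u(x_{n_{k}})+p_{n_{k}}\cdot(z-x_{n_{k}})\qquad\text{for all }z\in\R^{d},
\]
and using the continuity of $\tilde u$, gives $\tilde u(z)\ge\tilde u(x_{*})+p\cdot(z-x_{*})$ for all $z$, i.e. $p\in\chi_u(x_{*})\subset\chi_u(\tir{\Omega})$. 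This is exactly the closed-graph property of the subdifferential of the finite continuous convex function $\tilde u$.

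For the last inclusion $\chi_u(\R^{d})\subset\Conv(\tir{\Omega^{*}})$, first note that $\Conv(\tir{\Omega^{*}})$ is compact and convex, being the convex hull of the compact set $\tir{\Omega^{*}}$ in finite dimension. Take $p\in\chi_u(x_{0})$ and suppose, for contradiction, that $p\notin\Conv(\tir{\Omega^{*}})$. Strict separation of the point $p$ from this compact convex set yields a unit vector $\nu$ and a scalar $\alpha$ with $p\cdot\nu>\alpha$ and $q\cdot\nu\le\alpha$ for every $q\in\Omega^{*}$. Since $\tilde u$ is the supremum in \eqref{extension} of affine functions whose slopes $q$ lie in $\Omega^{*}$, one estimates, for $t>0$,
\[
\tilde u(x_{0}+t\nu)=\sup\{\,u(y)+q\cdot(x_{0}-y)+t\,(q\cdot\nu)\,\}\le\tilde u(x_{0})+t\alpha,
\]
the supremum being over $y\in\Omega$ and $q\in\partial u(y)$, where I used $u(y)+q\cdot(x_{0}-y)\le\tilde u(x_{0})$ and $q\cdot\nu\le\alpha$. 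Combining with the subgradient inequality $\tilde u(x_{0}+t\nu)\ge\tilde u(x_{0})+t\,(p\cdot\nu)$ and dividing by $t>0$ gives $p\cdot\nu\le\alpha$, contradicting $p\cdot\nu>\alpha$. Hence $p\in\Conv(\tir{\Omega^{*}})$.

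I expect the first inclusion to be the main obstacle: one must upgrade the elementary identity $\chi_u(\Omega)=\Omega^{*}$ to the closure $\tir{\Omega^{*}}$, and this forces the use of both the closed-graph property of $\partial\tilde u$ and the compactness of $\tir{\Omega}$. Both ingredients rest on the continuity of $\tilde u$ on $\R^{d}$, which is precisely what the boundedness of $u$ and of $\Omega^{*}$ deliver through Lemma~\ref{extension-convex}. The remaining two inclusions are comparatively routine — one is trivial, and the other is a clean strict-separation argument exploiting that the affine minorants defining $\tilde u$ all have slopes in $\Omega^{*}$.
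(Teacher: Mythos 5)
Your proposal is correct, but your treatment of the final inclusion $\chi_u(\R^d)\subset\Conv(\tir{\Omega^*})$ is genuinely different from the paper's. The paper proceeds structurally: it first upgrades the defining supremum of $\tilde u$ to a maximum over $y\in\tir{\Omega}$ and $q\in\chi_u(y)\cap\tir{\Omega^*}$, then shows that at every point of differentiability $D\tilde u(x)\in\tir{\Omega^*}$, and finally invokes Rockafellar's representation of the subdifferential \cite[Theorem 25.6]{Rockafellar70} (the subdifferential at $x$ is the closed convex hull of limits of gradients $D\tilde u(x_k)$ at differentiability points $x_k\to x$, plus the normal cone, which is trivial here), together with closedness of convex hulls of compact sets. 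You instead argue by strict separation: if $p\in\chi_u(x_0)$ lay outside the compact convex set $\Conv(\tir{\Omega^*})$, a separating direction $\nu$ with $p\cdot\nu>\alpha\ge q\cdot\nu$ for all $q\in\Omega^*$ would make the subgradient inequality force $\tilde u$ to grow in direction $\nu$ at rate at least $p\cdot\nu$, while the definition \eqref{extension} of $\tilde u$ as a supremum of affine functions with slopes in $\Omega^*$ caps that growth rate at $\alpha$ --- a contradiction. Your route is more elementary and shorter: it needs only Hahn--Banach separation and the definition of the extension, and avoids both the differentiability-point analysis and the external theorem from \cite{Rockafellar70}. What the paper's heavier route buys is the intermediate structural facts themselves (the max representation of $\tilde u$ and the location of its a.e.\ gradients), though neither is needed for the inclusion as stated. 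Your arguments for the other two inclusions (the trivial middle one, and the compactness-plus-continuity limit argument for $\tir{\Omega^*}\subset\chi_u(\tir{\Omega})$) coincide with the paper's.
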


\begin{proof}

We have by Lemma \ref{extension-convex}, $\chi_u(\Omega) = \partial u (\Omega)  = \Omega^* \subset 
\tir{\Omega^*}$.  Next, for $x \in \R^d \setminus \Omega$, we have 
$$
\tilde{u}(x) = \lim_{n \to \infty} u(y_n) + q_n \cdot (x-y_n)= \lim_{n \to \infty} \tilde{u}(y_n) + q_n \cdot (x-y_n),
$$
for $y_n \in \Omega$ and $q_n \in \partial u(y_n) = \chi_u(y_n)$. Since $\partial u (\Omega)$ is bounded, both $y_n$ and $q_n$ are bounded sequences. Up to a subsequence $y_n \to y$ and $q_n \to q$ for some $y \in \tir{\Omega}$ and 
$q \in \tir{\Omega^*}$. It is immediate that $q \in \chi_u(y)$. By Lemma \ref{extension-convex}, $\tilde{u}$ is continuous on $\R^d$. Therefore for $x \in \R^d$
$$
\tilde{u}(x) = \max \{ \,  \tilde{u}(y) + q \cdot (x-y), y \in \tir{\Omega}, q \in \chi_u(y) \cap \tir{\Omega^*}  \, \}.
$$
We claim that at any point $x$ where $\tilde{u}$ is differentiable, $D \tilde{u}(x) \in  \tir{\Omega^*}$. Let  $y \in \tir{\Omega}$ and $q \in \chi_u(y) \cap \tir{\Omega^*}$ such that  $\tilde{u}(x) = \tilde{u}(y) + q \cdot (x-y)$. It is enough to show that $q \in \chi_{u}(x)$, which implies by the differentiability of $\tilde{u}$ at $x$, $\chi_{u}(x) = \{ \,q \, \} = \{ \, D \tilde{u}(x) \, \}$. For all $z \in \R^d$, we have
\begin{align*}
\tilde{u}(z) \geq \tilde{u}(y) + q\cdot(z-y)= \tilde{u}(x) - q \cdot (x-y) + q\cdot(z-y)=  \tilde{u}(x) + q \cdot (z-x) . 
\end{align*}
The claim is proved.

Note that the domain of $\tilde{u}$ is $\R^d$ and the normal cone to $\R^d$ at $x$, i.e. $\{ \, p \in \R^d, p \cdot (x-y) \leq 0, \forall y \in \R^d\, \}= \{ \, 0 \, \}$. Also, $\tilde{u}$ is lower semicontinuous as the supremum of a family of continuous functions. It is therefore a closed function, i.e. its epigraph $\{ \, (x,w) \in \R^d, \tilde{u}(x) \leq w \, \} $ is closed. By \cite[Theorem 25.6]{Rockafellar70}, for $x \in \R^d$, $\chi_u(x)$ is the closure of the convex hull of the set $S(x)$ of limits of sequences of the form $D \tilde{u}(x_k), x_k \to x$ and $\tilde{u}$ differentiable at $x_k$. The theorem is stated in terms of the sum of $S(x)$ and the normal cone to the domain of $\tilde{u}$ at $x$.  

We have shown that $S(x) \subset \tir{\Omega^*}$. Thus $\chi_u(x) = \tir{\Conv S(x)} \subset \tir{\Conv ( \tir{\Omega^*})} = \Conv ( \tir{\Omega^*})$ where we use the fact that the convex hull of a compact subset of $\R^d$ is closed. We conclude that $\chi_u(\R^d) \subset \Conv ( \tir{\Omega^*})$. 

Next, we show that $\tir{\Omega^*} \subset \chi_u ( \tir{\Omega})$. Let $p \in \tir{\Omega^*}$ and $p_n \in \Omega^*$ such that $p_n \to p$. Furthermore, let $x_n \in \Omega$ such that $p_n \in \partial u(x_n)=\chi_u(x_n)$. We have for all $z \in \R^d$
$\tilde{u}(z) \geq \tilde{u}(x_n) + p_n \cdot (z-x_n)$. Up to a subsequence $x_n \to x, x \in \tir{\Omega}$ and we have $p \in \chi_u(x)$, using the continuity of $\tilde{u}$. We conclude that
$$
\tir{\Omega^*} \subset  \chi_u ( \tir{\Omega}) \subset \chi_u(\R^d) \subset \Conv ( \tir{\Omega^*}).
$$
\Qed

\end{proof}

%A convex function on an open set will be assumed in this paper to be a convex function on $\R^d$ with the canonical extension described above.

\subsection{Convex envelopes}

For a function $u: \tir{\Omega} \to \R$, recall that the convex envelope $\Gamma(u)$ of $u$ is the largest convex function majorized on $\tir{\Omega}$ by $u$. %It is characterized by the formula \cite[Section 2]{Alvarez-Lions}
%\begin{multline} \label{Alvarez}
%\Gamma(u)(x) = \inf \{ \, \lambda_1 u(x_1) + \ldots +  \lambda_k u(x_k) \text{ such that }
%x = \lambda_1 x_1 + \ldots +  \lambda_k x_k, \\ \lambda_1  + \ldots +  \lambda_k=1, 
%\lambda_i >0 \text{ for all } i, k \leq d+1  \, \},
%\end{multline}
%for $x \in \tir{\Omega}$.
 If we assume that $u(x) \geq C$ for all $x\in \tir{\Omega}$ for a constant $C$, then $\Gamma(u) \geq C$ and thus $\Gamma(u)$ is a proper convex function on $\tir{\Omega}$. It can be shown that for all $x \in \Omega$
$$
\Gamma(u)(x) = \sup_{L \text{ affine} }\{ \, L(x): L(y) \leq u(y) \ \forall y \in \tir{\Omega}
\, \},
$$
using supporting hyperplanes $L(y) = v(x) + p \cdot (y-x)$ for $p \in \partial \Gamma(u)(x)$, $x \in \Omega$.

As in \cite{Nochetto19,NeilanZhang19} we consider the convex envelope of the mesh function $u_h$ defined %on $(\Conv (\mathcal{N}_h))^\circ$ 
by
\begin{align} \label{def-cvx}
\Gamma(u_h)(x) = \sup_{L \text{ affine} }\{ \, L(x): L(y) \leq u_h(y)  \text{ for all } y \in \mathcal{N}_h
%\mathcal{N}_h 
\, \}.
\end{align}
Without loss of generality we may assume that $\Omega_h$ is non empty. Thus for $x_0 \in \Omega_h$ we have
$\Gamma(u_h)(x_0) \leq u_h(x_0)$. Since $\Omega_h$ is finite, we can find $\alpha \in \R$ such that $\alpha \leq u_h(x) \ \forall x \in \Omega_h$. Thus $\Gamma(u_h)(x) \geq \alpha \ \forall x \in \R^d$. 
As the supremum of convex functions, $\Gamma(u_h)$ is convex on $\R^d$. We conclude that $\Gamma(u_h)$ is a proper convex function.
We have 
\begin{equation} \label{g-u}
\Gamma(u_h)(x) \leq u_h(x) \ \forall x \in \mathcal{N}_h. %\Omega_h. 
\end{equation}
If $x \in \Conv (\mathcal{N}_h)$, we can write $x=\sum_{i=1}^N \lambda_i y_i$ with $y_i \in  \mathcal{N}_h$ and $\lambda_i \in [0,1]$ for all $i$ and an integer $N$. We then have $\Gamma(u_h)(x) \leq \sum_{i=1}^N \lambda_i \Gamma(u_h)(y_i) 
\leq \sum_{i=1}^N \lambda_i u_h(y_i)$. This shows that $\Gamma(u_h)$ is bounded on $\Conv (\mathcal{N}_h)$. Note that the definition \eqref{def-cvx} of the convex envelope allows an ''infinite slope'' at points of $\R^d$ not in $\Conv (\mathcal{N}_h)$. %By an abuse of notation, 

We will denote by $\widetilde{\Gamma}(u_h)$ the extension to $\R^d$ of $\Gamma_h(u)$ with the procedure described by \eqref{extension}. See Figure \ref{extension-fig}.

\begin{figure}[tbp]
\begin{center}
\includegraphics[angle=0, height=4.5cm]{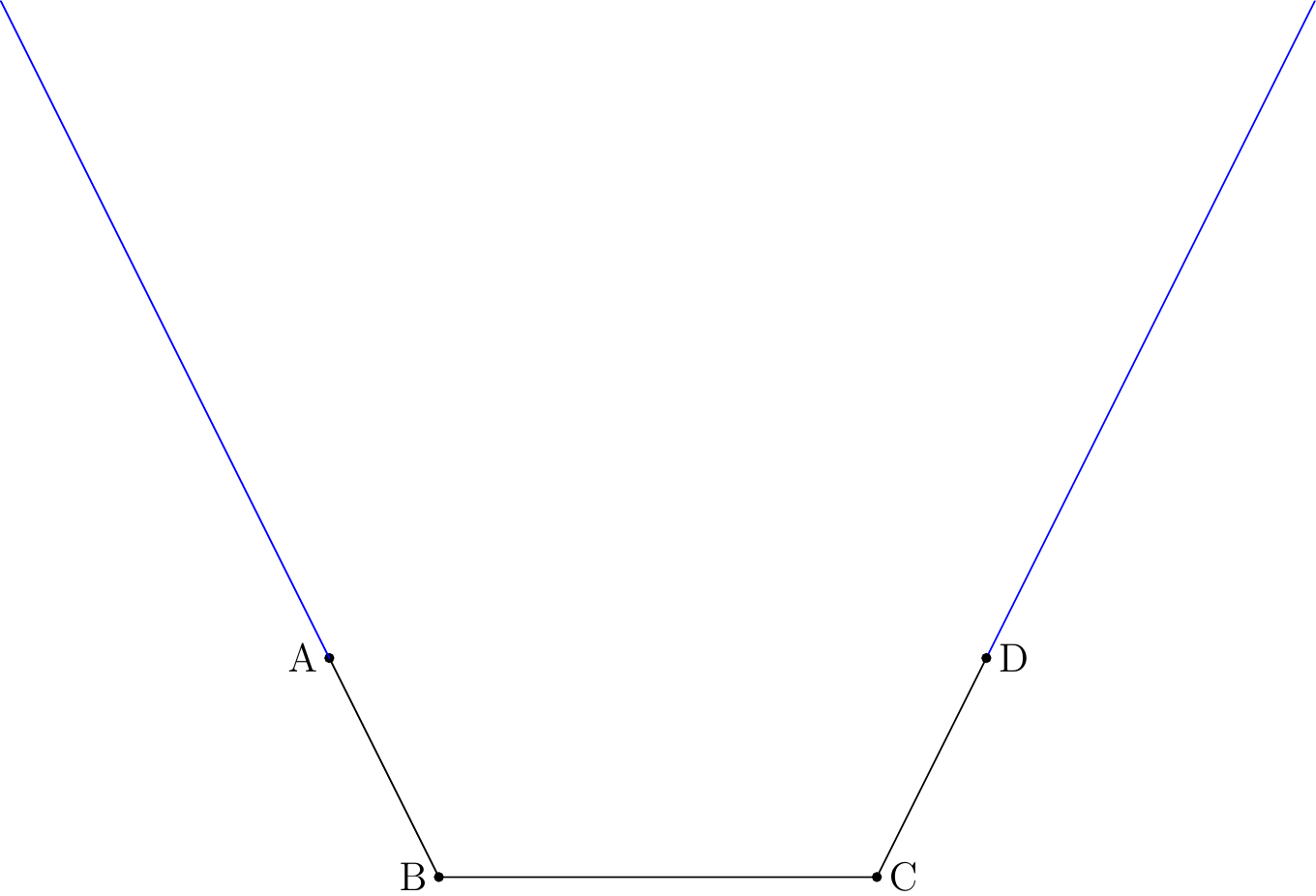}
\end{center}
\caption{ Convex envelope of the points $A(-1.5,1)$, $B(-1,0)$, $C(1,0)$ and $D(1.5,1)$ as a piecewise linear convex function on $[-1.5,1.5]$. The canonical extension as a piecewise linear convex function on $\R$ is shown. Without the extension, the convex envelope is infinite on $\R \setminus [-1.5,1.5]$. } 
\label{extension-fig}
\end{figure}

As a proper bounded convex function on $\Conv (\mathcal{N}_h)$, $\Gamma(u_h)$ is continuous on the interior of $\Conv (\mathcal{N}_h)$ by Lemma \ref{continuity-convex}. Now, recall that $\Conv (\mathcal{N}_h)$ is polyhedral. By \cite[Theorem 10.2]{Rockafellar70}, $\Gamma(u_h)$ is upper semicontinuous at any point of $\partial \Conv (\mathcal{N}_h)$. As a supremum of affine functions, $\Gamma(u_h)$ is lower semicontinuous on $\Conv (\mathcal{N}_h)$. We conclude that $\Gamma(u_h)$ is continuous and by Lemma \ref{extension-convex} $\Gamma(u_h)=\widetilde{\Gamma}(u_h)$ on $\Conv (\mathcal{N}_h)$.

  %We have \cite[Corollary 3]{Tawarmalani}
%We have

\begin{lemma} \label{cv-bd0}
For a mesh function $u_h$ such that $u_h=g$ on $\partial \Omega_h$ for a convex function $g$ on $\R^d$, we have  $\Gamma(u_h) = u_h$ on $\partial \Omega_h$. 
\end{lemma}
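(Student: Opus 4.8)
The inequality $\Gamma(u_h)(x_0)\le u_h(x_0)$ for $x_0\in\partial\Omega_h$ is already recorded in \eqref{g-u}, so the entire content of the lemma is the reverse inequality $\Gamma(u_h)(x_0)\ge u_h(x_0)=g(x_0)$. By the definition \eqref{def-cvx} of the convex envelope as a supremum over affine minorants, the plan is to exhibit, for each fixed $x_0\in\partial\Omega_h$, a single affine function $L$ with $L(y)\le u_h(y)$ for all $y\in\mathcal{N}_h$ and $L(x_0)=g(x_0)$. Such an $L$ is admissible in the supremum and forces $\Gamma(u_h)(x_0)\ge L(x_0)=g(x_0)$, which combined with \eqref{g-u} gives equality.

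The construction would use two ingredients. First, since $g$ is convex on $\R^d$ it admits a subgradient $p\in\partial g(x_0)$, so the affine function $L_0(y)=g(x_0)+p\cdot(y-x_0)$ satisfies $L_0\le g$ on all of $\R^d$ with $L_0(x_0)=g(x_0)$; in particular $L_0(y)\le g(y)=u_h(y)$ at every boundary node $y\in\partial\Omega_h$. The defect of $L_0$ is confined to the interior nodes $\Omega_h$, where $u_h$ is completely unconstrained and may lie far below $g$, so $L_0$ need not stay under $u_h$ there. This is the main obstacle. Second, since $x_0\in\partial\Omega_h\subset\partial\Conv(\mathcal{N}_h)$, I would take a supporting hyperplane of $\Conv(\mathcal{N}_h)$ at $x_0$, that is a vector $\nu$ with $\nu\cdot(y-x_0)\le 0$ for all $y\in\mathcal{N}_h$. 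The relation $\mathcal{N}_h\cap\partial\Conv(\mathcal{N}_h)=\partial\Omega_h$ recorded in the preliminaries is exactly what makes this inequality strict, $\nu\cdot(y-x_0)<0$, at every interior node $y\in\Omega_h$: no such node lies on $\partial\Conv(\mathcal{N}_h)$, hence none can lie on the supporting hyperplane.

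I would then remove the defect by tilting $L_0$, setting $L_t(y)=L_0(y)+t\,\nu\cdot(y-x_0)$ for a parameter $t\ge 0$. At $x_0$ the tilt vanishes, so $L_t(x_0)=g(x_0)$ for every $t$. On $\partial\Omega_h$ the tilt is nonpositive, so $L_t(y)\le L_0(y)\le u_h(y)$ for all $t\ge 0$. On the finite set $\Omega_h$ the strict inequality $\nu\cdot(y-x_0)<0$ drives $L_t(y)\to-\infty$ as $t\to\infty$, so a large enough $t$ (possible because $\Omega_h$ is finite and each $u_h(y)$ is finite) secures $L_t(y)\le u_h(y)$ there as well. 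For such a $t$, $L_t$ is admissible in \eqref{def-cvx}, yielding $\Gamma(u_h)(x_0)\ge L_t(x_0)=g(x_0)=u_h(x_0)$. The only delicate point is the strictness of the separation at interior nodes, which is precisely why the geometric relation $\mathcal{N}_h\cap\partial\Conv(\mathcal{N}_h)=\partial\Omega_h$ must be invoked; implicitly one also uses that $\Conv(\mathcal{N}_h)$ is full-dimensional, so that a supporting hyperplane meets it only along its boundary.
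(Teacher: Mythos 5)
Your proposal is correct, but it takes a genuinely different route from the paper. The paper's proof is structural: it picks a face $F$ of $\Conv(\mathcal{N}_h)$ containing the boundary node $x_0$, invokes a theorem of Tawarmalani--Sahinidis to write $F=\Conv(B)$ with $B\subset\partial\Omega_h$, and then uses their result that the restriction of $\Gamma(u_h)$ to $F$ equals the convex envelope of $u_h|_B$, which is $g$ itself on $B$ because $g$ is convex --- so the interior nodes never enter the argument at all. You instead neutralize the interior nodes by hand: a subgradient plane of $g$ at $x_0$ takes care of the boundary nodes, and tilting by a supporting hyperplane of $\Conv(\mathcal{N}_h)$ at $x_0$ --- strict at every node of $\Omega_h$ by the relation $\mathcal{N}_h\cap\partial\Conv(\mathcal{N}_h)=\partial\Omega_h$, combined with the finiteness of $\Omega_h$ --- pushes the plane below $u_h$ at the interior nodes while fixing its value at $x_0$, producing an admissible competitor in \eqref{def-cvx} that matches $g(x_0)$; together with \eqref{g-u} this gives equality. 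What each approach buys: yours is elementary and self-contained, using only the definition of the envelope, the existence of subgradients for finite convex functions on $\R^d$, and the supporting hyperplane theorem; the paper's is shorter modulo the cited envelope-restriction results and sidesteps the full-dimensionality caveat you correctly flag (which, in any case, is harmless: it follows from $\Omega_h\neq\emptyset$ together with the stated relation, since a lower-dimensional hull would coincide with its own boundary and force $\Omega_h=\emptyset$).
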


\begin{proof}

%By \eqref{g-u}, we have $\Gamma(u_h)(x) \leq u_h(x) \ \forall x \in \partial \Omega_h$. 
Recall that a face $F$ of a convex set $C$ is such that if $x \in F$ and $y, z \in C$ such that $x=\lambda y + (1-\lambda ) z$, $0<\lambda<1$, we have $y, z \in F$.

Let $x \in \partial \Omega_h$ and $F$ a face of $Conv (\mathcal{N}_h)$ such that $x \in F$. By  \cite[Theorem 1]{Tawarmalani}, 
$F = \Conv(B)$ for $B \subset \partial \Omega_h$. Let $\Gamma(u_h|_B)$ denote the convex envelope of $u_h$ over $B$, i.e. the largest convex function majorized by $u_h=g$ on $B$. Since $g$ is convex on $\R^d$, we have $\Gamma(u_h|_B)=g$ on $B$.  By \cite[Corollary 2]{Tawarmalani}, the restriction of $\Gamma(u_h)$ to $F$ is equal to $\Gamma(u_h|_B)$. This proves that $\Gamma(u_h) = u_h=g$ on $\partial \Omega_h$. \Qed

\end{proof}

We will also need the discrete convex envelope $\Gamma_h(u)$ of a function $u$ on $\tir{\Omega}$, i.e. for $x \in \tir{\Omega}$
\begin{equation} \label{disc-cv}
\Gamma_h(u)(x) = \sup_{L \text{ affine} }\{ \, L(x): L(y) \leq u(y) \ \forall y \in \mathcal{N}_h \, \}.
\end{equation} 
In other words, $\Gamma_h(u)(x)$ is the convex envelope of the restriction to $ \mathcal{N}_h$ of the function $u$. We reserve the notation $\Gamma(u)$ for the convex envelope of a function $u$ on $\tir{\Omega}$.

\begin{theorem} \label{discrete-cvx-envelope-thm}
Let $u$ be a  function on $\tir{\Omega}$ which is convex on $\tir{\Omega}$. Then
$$
u(y) = 
\Gamma(u)(y) = \Gamma_h(u)(y), y \in  \mathcal{N}_h. 
$$
\end{theorem}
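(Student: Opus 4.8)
The plan is to prove the two equalities separately, the first being essentially definitional and the second carrying the actual content.

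First I would establish $u=\Gamma(u)$ on $\tir{\Omega}$. Since $u$ is convex on $\tir{\Omega}$ and $u\le u$, the function $u$ itself belongs to the family of convex functions majorized on $\tir{\Omega}$ by $u$. The pointwise supremum of that family is convex (a supremum of convex functions), lies below $u$ by construction, and lies above $u$ because $u$ is one of its members; hence $\Gamma(u)=u$ on all of $\tir{\Omega}$, with no continuity hypothesis needed. In particular $u(y)=\Gamma(u)(y)$ for every $y\in\mathcal{N}_h$, which settles the first equality.

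For the second equality $u(y)=\Gamma_h(u)(y)$ on $\mathcal{N}_h$, fix $y_0\in\mathcal{N}_h$; recall that $y_0\in\Conv(\mathcal{N}_h)$. The inequality $\Gamma_h(u)(y_0)\le u(y_0)$ is immediate from \eqref{disc-cv}: since $y_0\in\mathcal{N}_h$, every affine $L$ admissible in the supremum satisfies $L(y_0)\le u(y_0)$. For the reverse inequality I would invoke the dual (lower convex hull) description of the discrete convex envelope: for $x\in\Conv(\mathcal{N}_h)$,
\[
\Gamma_h(u)(x)=\min\Big\{\sum_i \lambda_i u(y_i): y_i\in\mathcal{N}_h,\ \lambda_i\ge 0,\ \sum_i \lambda_i=1,\ \sum_i \lambda_i y_i=x\Big\},
\]
which holds by finite-dimensional linear programming duality, the minimum being attained (Carath\'eodory, since $\mathcal{N}_h$ is finite). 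Applying this at $x=y_0$ and using the convexity of $u$, every admissible representation $y_0=\sum_i \lambda_i y_i$ gives $\sum_i \lambda_i u(y_i)\ge u\big(\sum_i \lambda_i y_i\big)=u(y_0)$, so $\Gamma_h(u)(y_0)\ge u(y_0)$. Combining the two bounds yields $\Gamma_h(u)(y_0)=u(y_0)=\Gamma(u)(y_0)$.

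The step I expect to be the main obstacle is the reverse inequality at the boundary nodes $y_0\in\partial\Omega_h\subset\partial\Omega$. One cannot simply compare $\Gamma_h(u)$ with $\Gamma(u)$ through the affine-supremum formula, because at a boundary point a convex function may fail to possess a global supporting hyperplane (a convex $u$ can jump upward at $\partial\Omega$), so the supremum of affine functions lying below $u$ on all of $\tir{\Omega}$ can undershoot $u(y_0)$. The discreteness of $\mathcal{N}_h$ is precisely what rescues the argument: affine functions constrained only at the finitely many nodes may be taken arbitrarily steep, so the lower-hull value at a node equals the node value. This is why I route the reverse inequality through the convex-combination identity rather than through an inclusion of admissible affine functions; verifying that identity (or citing it from the convex-envelope literature) is the one genuinely non-formal ingredient.
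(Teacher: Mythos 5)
Your proof is correct, and for the substantive inequality $\Gamma_h(u)(y)\ge u(y)$ it takes a genuinely different route from the paper's. The paper argues by a sandwich: every affine $L$ with $L\le u$ on $\tir{\Omega}$ is admissible in \eqref{disc-cv}, hence $\Gamma(u)\le\Gamma_h(u)$ at the nodes, and since a convex $u$ is its own envelope, $u(y)=\Gamma(u)(y)\le\Gamma_h(u)(y)\le u(y)$. You instead identify $\Gamma_h(u)$ on $\Conv(\mathcal{N}_h)$ with the lower convex hull of the nodal values (strong LP duality, attainment by compactness) and finish with Jensen's inequality. The comparison is interesting precisely because of the boundary caveat in your last paragraph: the paper's middle inequality tacitly uses the affine-supremum representation of $\Gamma(u)$ at points of $\partial\Omega_h\subset\partial\Omega$, whereas the paper establishes that representation only for $x\in\Omega$. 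For a convex $u$ that is not lower semicontinuous at the boundary --- e.g.\ $u=0$ on $\Omega$, $u=1$ on $\partial\Omega$, which is convex on $\tir{\Omega}$ --- the supremum of affine functions lying below $u$ on all of $\tir{\Omega}$ is $\le 0$ at a boundary node, strictly below $u(y_0)=1$, so the family-inclusion argument by itself does not deliver $u(y_0)\le\Gamma_h(u)(y_0)$ there; the conclusion is still true, but only through an argument like yours, exploiting the arbitrarily steep affine functions that finitely many constraints allow. Your duality-plus-Jensen route never needs a global supporting hyperplane at the node, and therefore proves the theorem in the stated generality (no continuity of $u$ assumed) --- a generality that is not vacuous, since the theorem is applied in Lemma \ref{cvg-cvx} at boundary nodes to a limit $u$ known only to be convex and bounded. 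What the paper's proof buys is brevity and self-containedness, staying entirely within the two defining formulas; what yours buys is robustness exactly at the point where the paper's argument is thinnest, at the cost of importing one standard external fact (the LP-duality description of the finite convex envelope).
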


\begin{proof}
By definition  $\Gamma_h(u)(y) \leq u(y), y \in  \mathcal{N}_h$. Note that, a priori, this inequality does not necessarily hold on $\tir{\Omega}$.
If $L$ is affine with $L(y) \leq u(y)$ for all $y \in \tir{\Omega}$, we also have $L(y) \leq u(y)$ for all $ y \in \mathcal{N}_h$. Thus $ \Gamma(u)(x) \leq \Gamma_h(u)(x)$ for all $x \in \mathcal{N}_h$. But since $u$ is convex on $\tir{\Omega}$, $\Gamma(u)(x)=u(x)$ for all $x \in \tir{\Omega}$. Thus for all $y \in \mathcal{N}_h$, $u(y) = \Gamma(u)(y) \leq \Gamma_h(u)(y) \leq u(y)$. %Moreover, by Lemma \ref{cv-bd}, $\Gamma_h(u)=u$ on $\partial \Omega_h=\mathcal{N}_h \cap \partial \Conv (\mathcal{N}_h)$. 
This proves the result. \Qed

\end{proof}

\subsection{Discrete normal mapping and convexity} \label{bo-mesh}

For %a set $S \subset \mathbb{Z}^d_h$ and 
a mesh function $u_h \in \mathcal{U}_h$, the discrete normal mapping of $u_h$ at the point $x \in \Omega_h$ %on $S$ at the point $x \in S$ 
is defined as
\begin{align*}
%\partial_h u_h(x) = \{ \, p \in \R^d, p \cdot e \geq u_h(x) - u_h(x-e) \, \forall  e \in \mathbb{Z}^d_h \text{ such that } x - e \in \tir{\Omega} \cap\mathbb{Z}^d_h \, \}.
\partial_h u_h(x) = \{ \, p \in \R^d, u_h(y) \geq u_h(x) + p \cdot (y-x) \, \forall  y \in \mathcal{N}_h
%\mathcal{N}_h 
\, \}.
\end{align*}
%For convenience, we will often omit the mention that we need $x - e \in \tir{\Omega} \cap\mathbb{Z}^d_h$ in the definition of $\partial_h u_h (x )$.
We note that $\partial_h u_h(x)$ may be empty. As in \cite{Mirebeau15}, for $e \in \mathbb{Z}^d, v_h \in \mathcal{U}_h$ and $x \in \Omega_h$ we  define
\begin{equation} \label{Delta-e}
\Delta_e u_h (x) = \frac{2}{h^e_x + h^{-e}_x} \bigg( \frac{u_h(x+ h^e_x e ) - u_h(x)}{h^e_x}  + \frac{u_h(x- h^{-e}_x e ) - u_h(x)}{h^{-e}_x} \bigg).
\end{equation}
Recall that the set of directions $V$ used in the next definition was used for the definition of $\partial \Omega_h$.
\begin{definition} \label{discrete-convex}
We say that a mesh function $u_h$ is {\it discrete convex} if and only if $\Delta_e u_h(x) \geq 0$ for all $x \in \Omega_h$ and  $e \in V \subset \mathbb{Z}^d$. % where $V$ is a set of directions which includes the elements of the canonical basis of $\R^d$ and such that for $e \in V$, $-e \in V$. 
\end{definition}

The above definition is motivated by discretizations of the Monge-Amp\`ere operator for smooth convex functions, i.e. 
$\omega(1,u,E) = \int_E \det D^2 u(x) \ dx$. Here $\det D^2 u(x)$ is the determinant of $D^2 u(x)=\bigg( \partial^2 u(x)/\partial x_i \partial x_j \bigg)_{i,j=1,\ldots, d}$, $x=(x_1,\ldots,x_d)$. For the discretization proposed in \cite{Oberman2010a} the discrete Monge-Amp\`ere operator is taken as
$$
\mathcal{M}_h[u_h](x) = \inf_{ (e_1,\ldots,e_d) \in W } \prod_{i=1}^d \frac{ \max\{ \, \Delta_{e_i} u_h(x),0 \, \}}{|| e_i ||^2}, x \in \Omega_h,
$$
where 
$$
W = \{ \, (e_1,\ldots,e_d), e_i \in \mathbb{Z}^d, \, i=1,\ldots,d,  (e_1,\ldots,e_d) \, \text{is an orthogonal basis of} \, \R^d \, \}.
$$
%and we assume here that $u_h$ is discrete convex with an appropriate choice of $V$.
If $x \in \Omega_h, f(x)>0$ and $\mathcal{M}_h[u_h](x)=f(x)$,  it is necessary to have $\Delta_{e_i} u_h(x)>0$ for all $i$ with
$(e_1,\ldots,e_d) \in W$. 

%When $d=2$ and $(e_1,e_2) \in W$, this requires $\Delta_{e_1} u_h(x)$ and $\Delta_{e_2} u_h(x)$ have the same sign to guarantee $\mathcal{M}_h[u_h](x)=f(x) \geq 0$. For consistency, since we are interested in approximating a convex function, we are lead to the condition $\Delta_{e_i} u_h (x) \geq 0$ $i=1, 2$.

One may also define discrete convexity by requiring that $\partial_h u_h(x)\neq \emptyset$ for all $x \in \Omega_h$. For $p \in \partial_h u_h(x)$, we have $( u_h(x) - u_h(x- h^{-e}_x e ) )/h^{-e}_x \leq p \cdot e \leq (u_h(x+ h^e_x e ) - u_h(x))/h^{e}_x$. This implies that $\Delta_e u_h (x) \geq 0$. However, a mesh function may satisfy $\partial_h u_h(x) = \emptyset$ with $\Delta_e u_h (x) \geq 0$ for all $e \in \mathbb{Z}^d$. %Thus definition \ref{discrete-convex} is more general.

%\section{Discrete normal mapping and weak convergence} \label{discrete}
\section{Monge-Amp\`ere measures for mesh functions}  \label{discrete}

%The next two lemmas 
In this section we relate the discrete normal mapping of a mesh function to the  normal mapping of its convex envelope. 
Recall that for $x \in \R^d$, $\chi_{u}(x)$ denotes the subdifferential of the  extension of the convex function $u$. Also, $ \widetilde{\Gamma}(u_h)$ denotes the convex extension of $\Gamma(u_h)$.

\begin{lemma} \label{equi0}
If $x \in \Conv (\mathcal{N}_h)$,  for $p \in  \chi_{\Gamma(u_h)} (x)$ there exists $y \in \mathcal{N}_h$ such that $p \in \chi_{\Gamma(u_h)} (x) \cap \chi_{\Gamma(u_h)} (y)$ and $u_h(y)=\Gamma(u_h) (y)$.
\end{lemma}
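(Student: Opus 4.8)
The plan is to realize the subgradient $p$ as the slope of a supporting affine function and then transport the contact point from $x$ to a node of $\mathcal{N}_h$ at which $\Gamma(u_h)$ touches $u_h$. Concretely, I would set
\[
L(z) := \Gamma(u_h)(x) + p \cdot (z-x), \qquad z \in \R^d.
\]
Since $x \in \Conv(\mathcal{N}_h)$ we have $\widetilde{\Gamma}(u_h)(x) = \Gamma(u_h)(x)$, so the hypothesis $p \in \chi_{\Gamma(u_h)}(x)$ says precisely that $L(z) \le \widetilde{\Gamma}(u_h)(z)$ for all $z \in \R^d$, with equality at $z = x$.

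Next I would use the convex-hull description of the envelope on the polytope $\Conv(\mathcal{N}_h)$: the value $\Gamma(u_h)(x)$ is attained as
\[
\Gamma(u_h)(x) = \sum_{i=1}^N \lambda_i \, u_h(y_i), \qquad x = \sum_{i=1}^N \lambda_i y_i,
\]
with $y_i \in \mathcal{N}_h$, $\lambda_i > 0$, $\sum_i \lambda_i = 1$. Here the minimum of $\sum_i \lambda_i u_h(y_i)$ over barycentric representations of $x$ by the finite set $\mathcal{N}_h$ is attained by compactness of the coefficient polytope; that this minimum equals $\Gamma(u_h)(x)$ is the standard lower-convex-hull characterization, one inequality being exactly the estimate $\Gamma(u_h)(x) \le \sum_i \lambda_i u_h(y_i)$ already recorded just after \eqref{g-u}. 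For each $i$, combining $L \le \widetilde{\Gamma}(u_h)$ with $\widetilde{\Gamma}(u_h)(y_i) = \Gamma(u_h)(y_i)$ and \eqref{g-u} gives $L(y_i) \le \Gamma(u_h)(y_i) \le u_h(y_i)$.

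The decisive step is to force equality at the $y_i$. Since $L$ is affine and $x = \sum_i \lambda_i y_i$,
\[
\sum_{i=1}^N \lambda_i\, u_h(y_i) = \Gamma(u_h)(x) = L(x) = \sum_{i=1}^N \lambda_i\, L(y_i),
\]
so that $\sum_i \lambda_i\,(u_h(y_i) - L(y_i)) = 0$ with every summand nonnegative and every $\lambda_i > 0$; hence $L(y_i) = u_h(y_i)$, and therefore $\Gamma(u_h)(y_i) = u_h(y_i)$, for each $i$. Fixing any $y := y_i$ then settles both requirements: $u_h(y) = \Gamma(u_h)(y)$ holds by the previous line, while rewriting $L$ as $L(z) = \widetilde{\Gamma}(u_h)(y) + p \cdot (z - y)$ and using $L \le \widetilde{\Gamma}(u_h)$ on all of $\R^d$ shows $p \in \chi_{\Gamma(u_h)}(y)$; together with the hypothesis this yields $p \in \chi_{\Gamma(u_h)}(x) \cap \chi_{\Gamma(u_h)}(y)$.

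I expect the only delicate point to be the second step, namely the identification of $\Gamma(u_h)(x)$ with an \emph{attained} convex combination of node values $u_h(y_i)$ at which the envelope is active; once that representation is in hand, the transfer of the contact point is a short affineness-plus-positivity argument, and the membership $p \in \chi_{\Gamma(u_h)}(y)$ follows immediately from the defining subgradient inequality for $\widetilde{\Gamma}(u_h)$.
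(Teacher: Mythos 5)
Your proof is correct, but it takes a genuinely different route from the paper's. The paper argues variationally and stays entirely within the definition \eqref{def-cvx}: it sets $L_1(z) = \widetilde{\Gamma}(u_h)(x) + p\cdot(z-x)$, considers the gap $a = \min_{y \in \mathcal{N}_h}\,(u_h(y) - L_1(y)) \geq 0$ (attained because $\mathcal{N}_h$ is finite), and shows $a=0$ by contradiction: if $a>0$, the lifted affine function $L_1 + a/2$ still lies below $u_h$ on $\mathcal{N}_h$ yet exceeds $\Gamma(u_h)(x)$ at $x$, contradicting the definition of the envelope as a supremum of affine minorants. The node $y_0$ attaining $a=0$ is then the contact point, and $p \in \chi_{\Gamma(u_h)}(y_0)$ follows by the same direct computation you perform at the end. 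You instead invoke the identification of $\Gamma(u_h)$ on $\Conv(\mathcal{N}_h)$ with the lower convex hull of the node values, i.e.\ an \emph{attained} minimum over barycentric representations, and then force equality at every active node by affineness of $L$ together with nonnegativity of the terms $u_h(y_i) - L(y_i)$ and positivity of the weights. That identification is, as you say yourself, the crux: it is a standard fact (the epigraph of the lower convex hull is a closed polyhedron, so the function coincides with the supremum of its affine minorants, cf.\ \cite{Rockafellar70}), and it is precisely the ``observation'' on which the paper bases its later geometric treatment (the induced triangulations, Remark \ref{geo-cvx-envelope} and Theorem \ref{geo-proof}); but in the paper's logical order that fact is only asserted \emph{after} Lemma \ref{equi0}, and the paper's own inequality recorded after \eqref{g-u} gives only the easy direction $\Gamma(u_h)(x) \leq \sum_i \lambda_i u_h(y_i)$, not the attained equality you need. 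So a self-contained write-up would have to include the (short) polyhedral-biconjugation argument. What your route buys in exchange: the contact is obtained at \emph{all} nodes of the minimizing representation rather than a single one, which is essentially the content of Remark \ref{geo-cvx-envelope}, and your mechanism transfers verbatim to the geometric framework of Section 3; the paper's proof is more elementary and self-contained, needing only finiteness of $\mathcal{N}_h$.
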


\begin{proof}
%Let $x \in \Conv (\mathcal{N}_h)$ and $p \in \chi_{\Gamma(u_h)} (x)$. Recall that $\Gamma(u_h)(x) =\widetilde{\Gamma}(u_h)(x)$. We first show that there exists $y \in \Omega_h$ such that $p \in \chi_{\Gamma(u_h)} (x) \cap \partial \Gamma(u_h)(y)$.
Recall that $\Gamma(u_h)=\widetilde{\Gamma}(u_h)$ on $\Conv (\mathcal{N}_h)$.
Since $p \in \chi_{\Gamma(u_h)} (x)$, using \eqref{g-u} and with 
$$
L_1(y) = \widetilde{\Gamma}(u_h)(x) + p \cdot (y-x),
$$ 
we have
$$
u_h(y) \geq \Gamma(u_h)(y) =\widetilde{\Gamma}(u_h)(y) \geq L_1(y), \ \forall y \in \mathcal{N}_h.
$$
Define
$$
a = \min \{ \, u_h(y) - L_1(y), y \in \mathcal{N}_h \, \}.
$$
We have $a\geq 0$. %We must have $u_h(x) > \Gamma(u_h)(x)$. Otherwise, if $u_h(x) = \Gamma(u_h)(x)$, then by the first claim $ \partial_h u_h(x) = \partial \Gamma(u_h)(x) \neq \emptyset$ contradicting our assumption.  
Assume that $a>0$ and consider the linear function
$$
L_2(z) =  \frac{a}{2} + \widetilde{\Gamma}(u_h)(x)  + p \cdot (z-x). 
$$ 
We claim that $u_h \geq L_2$ on $\mathcal{N}_h$ but $L_2(x) = a/2 + \widetilde{\Gamma}(u_h)(x) > \widetilde{\Gamma}(u_h)(x)$. This gives a contradiction since we should have $\widetilde{\Gamma}(u_h)(x) = \Gamma(u_h)(x) \geq L_2(x)$.
We have
$$
u_h(y) - L_1(y) \geq a, \ \forall y \in \mathcal{N}_h,
$$
and thus
\begin{align*}
u_h(y) &\geq a+ \widetilde{\Gamma}(u_h)(x) + p \cdot (y-x) \\
 & \geq \frac{a}{2} +  \widetilde{\Gamma}(u_h)(x)  + p \cdot (y-x) = L_2(y).
\end{align*}
We conclude that $a=0$ and there exists $y_0 \in \mathcal{N}_h$ such that $a=u_h(y_0)- L_1(y_0)=0$. Note that this implies that
$u_h(y_0) = \Gamma(u_h)(y_0)$, that is
\begin{equation} \label{a0}
u_h(y_0) - L_1(y_0) = \Gamma(u_h)(y_0) - L_1(y_0)=0. 
\end{equation}
We now show that $p \in \chi_{\Gamma(u_h)}(y_0)$. Let $z \in \R^d$. We have using
\eqref{a0} and $p \in  \chi_{\Gamma(u_h)} (x)$
\begin{align*}
\Gamma(u_h)(y_0) + p \cdot (z-y_0) & =  \Gamma(u_h)(y_0) + p \cdot (z-x) +  p \cdot (x-y_0)  \\
& =  \Gamma(u_h)(y_0) + \Gamma(u_h)(x) + p \cdot (z-x) \\
 & \qquad \qquad \qquad \qquad \qquad \qquad \qquad -  \Gamma(u_h)(x) - p \cdot (y_0-x)  \\
& =   \Gamma(u_h)(y_0) + L_1(z) - L_1(y_0) = L_1(z) \leq \widetilde{\Gamma}(u_h)(z),
\end{align*}
which shows that $p \in \chi_{\Gamma(u_h)}(y_0)$. \Qed

\end{proof}

\begin{lemma} \label{equivalence}
Let $u_h \in \mathcal{U}_h$
\begin{enumerate}
\item If for $x \in \Omega_h$, $\Gamma(u_h)(x) = u_h(x)$, then $\partial \Gamma(u_h)(x) =  \partial_h u_h(x)$

\item If $x \in \Omega_h$ and $\Gamma(u_h)(x) \neq u_h(x)$, then $ \partial_h u_h(x)=\emptyset$ and for any $p \in \chi_{\Gamma(u_h)}(x)$ there exists $y \in \mathcal{N}_h, y \neq x$ such that $p \in \chi_{\Gamma(u_h)}(x) \cap \chi_{\Gamma(u_h)}(y)$

\item If $x \in (\Conv (\mathcal{N}_h))^{\circ}$ but $x \notin \mathbb{Z}^d_h $, then for $p \in  \chi_{\Gamma(u_h)} (x)$ there exists $y \in \mathcal{N}_h, y \neq x$ such that $p \in \chi_{\Gamma(u_h)} (x) \cap \chi_{\Gamma(u_h)}(y)$.

\end{enumerate}

\end{lemma}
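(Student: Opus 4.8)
The plan is to dispatch the three cases in order, letting Lemma \ref{equi0} carry the real weight in parts (2) and (3) while parts (1) and the emptiness claim in (2) are handled directly from the definitions.

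For part (1) I would argue the equality $\partial \Gamma(u_h)(x) = \partial_h u_h(x)$ by two inclusions. For ``$\subseteq$'', take $p \in \partial \Gamma(u_h)(x)$ and evaluate the subgradient inequality $\Gamma(u_h)(z) \geq \Gamma(u_h)(x) + p\cdot(z-x)$ at $z = y \in \mathcal{N}_h$; combining $\Gamma(u_h)(y) \leq u_h(y)$ from \eqref{g-u} with the hypothesis $\Gamma(u_h)(x) = u_h(x)$ yields $u_h(y) \geq u_h(x) + p\cdot(y-x)$, i.e. $p \in \partial_h u_h(x)$. For ``$\supseteq$'', take $p \in \partial_h u_h(x)$; then the affine function $L(z) = u_h(x) + p\cdot(z-x)$ satisfies $L \leq u_h$ on $\mathcal{N}_h$, so by the very definition \eqref{def-cvx} of $\Gamma(u_h)$ as the supremum of such affine minorants we get $\Gamma(u_h)(z) \geq L(z)$ for all $z$, and since $L(x) = u_h(x) = \Gamma(u_h)(x)$ this says precisely $p \in \partial \Gamma(u_h)(x)$.

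For part (2) I would first establish $\partial_h u_h(x) = \emptyset$ by contradiction: a hypothetical $p \in \partial_h u_h(x)$ would, exactly as in the ``$\supseteq$'' step above, give an affine minorant $L \leq u_h$ on $\mathcal{N}_h$ with $L(x) = u_h(x)$, forcing $\Gamma(u_h)(x) \geq u_h(x)$ and hence, together with \eqref{g-u}, the equality $\Gamma(u_h)(x) = u_h(x)$ that the hypothesis forbids. The remaining assertion follows from Lemma \ref{equi0}: since $x \in \Omega_h \subset \Conv(\mathcal{N}_h)$, for every $p \in \chi_{\Gamma(u_h)}(x)$ the lemma furnishes $y \in \mathcal{N}_h$ with $p \in \chi_{\Gamma(u_h)}(x) \cap \chi_{\Gamma(u_h)}(y)$ and $u_h(y) = \Gamma(u_h)(y)$; this $y$ is automatically distinct from $x$ because $u_h(x) \neq \Gamma(u_h)(x)$.

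For part (3), since $x \in (\Conv(\mathcal{N}_h))^{\circ} \subset \Conv(\mathcal{N}_h)$, Lemma \ref{equi0} again supplies $y \in \mathcal{N}_h$ with $p \in \chi_{\Gamma(u_h)}(x) \cap \chi_{\Gamma(u_h)}(y)$, so it only remains to rule out $y = x$. Splitting $\mathcal{N}_h = \Omega_h \cup \partial \Omega_h$: if $y \in \Omega_h \subset \mathbb{Z}^d_h$ then $y \neq x$ because $x \notin \mathbb{Z}^d_h$; if $y \in \partial \Omega_h$, then by the identity $\mathcal{N}_h \cap \partial \Conv(\mathcal{N}_h) = \partial \Omega_h$ recorded in the preliminaries, $y$ lies on $\partial \Conv(\mathcal{N}_h)$, whereas $x$ is interior, so again $y \neq x$. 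The substantive content is entirely inside Lemma \ref{equi0}, so the chief point requiring care is bookkeeping: keeping the two normal mappings straight, namely the subdifferential $\partial \Gamma(u_h)$ of the envelope itself in part (1), which coincides with $\chi_{\Gamma(u_h)}$ there because $\Omega_h \subset (\Conv(\mathcal{N}_h))^{\circ}$, versus the extension subdifferential $\chi_{\Gamma(u_h)}$ used in parts (2)--(3), and applying the boundary identity for $\mathcal{N}_h$ correctly when excluding $y = x$.
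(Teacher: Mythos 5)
Your proposal is correct and follows essentially the same route as the paper: both directions of part (1) via the affine minorant $L(z)=u_h(x)+p\cdot(z-x)$ and \eqref{g-u}, the emptiness claim in part (2) as the contrapositive of ``$\partial_h u_h(x)\neq\emptyset$ implies contact,'' and parts (2)--(3) delegated to Lemma \ref{equi0} with the distinctness of $y$ obtained from the non-contact hypothesis (resp.\ from $x\notin\mathcal{N}_h$, which your split $\mathcal{N}_h=\Omega_h\cup\partial\Omega_h$ makes explicit). The only nuance the paper adds is invoking the continuity of $\Gamma(u_h)$ on $\Conv(\mathcal{N}_h)$ when evaluating the subgradient inequality at nodes lying on $\partial\Conv(\mathcal{N}_h)$, a point your argument uses implicitly.
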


\begin{proof}

%{\it Proof of 1-}
We start with the proof of the first statement.  Assume that $x \in \Omega_h$ and $\Gamma(u_h)(x) = u_h(x)$. 

\noindent
Let $p \in  \partial_h u_h(x)$. We have $u_h(y) \geq u_h(x) + p \cdot (y-x) \, \forall  y \in \mathcal{N}_h$. Consider the affine function $L(z) = u_h(x) + p \cdot (z-x)$. Since $L(y) \leq u_h(y)$ for all $y \in \mathcal{N}_h$, we have for all $z \in \Conv (\mathcal{N}_h)$
$$
\Gamma(u_h)(z) \geq L(z) = u_h(x) + p \cdot (z-x) = \Gamma(u_h)(x) + p \cdot (z-x),
$$
i.e. $p \in \partial \Gamma(u_h)(x)$. Thus $ \partial_h u_h(x) \subset  \partial \Gamma(u_h)(x)$. 

\noindent
Conversely, if $p \in \partial \Gamma(u_h)(x)$, as $\Gamma(u_h)$ is continuous on $\Conv (\mathcal{N}_h)$,
we have for all $y \in \mathcal{N}_h$, $\Gamma(u_h)(y) \geq u_h(x) + p \cdot (y-x)$. 
By \eqref{g-u}, this gives $p \in  \partial_h u_h(x)$ and completes the proof of the first claim.

Next, we prove the second statement. We first show that if $x \in \Omega_h$ and $ \partial_h u_h(x)\neq\emptyset$, then $\Gamma(u_h)(x) = u_h(x)$. 

\noindent
Let $p \in  \partial_h u_h(x)$. % and assume that $\Gamma(u_h)(x) < u_h(x)$. 
Since $u_h(y) \geq L(y) = u_h(x) + p \cdot (y-x)$ for all $y \in \mathcal{N}_h$, we obtain $\Gamma(u_h)(x) \geq L(x) = u_h(x)$ which by \eqref{g-u} gives $\Gamma(u_h)(x) = u_h(x)$.

%and 
\noindent
To conclude the proof of the second statement, assume that $x\in \Omega_h$ and $\Gamma(u_h)(x) \neq u_h(x)$. %We must have $u_h(x) > \Gamma(u_h)(x)$. Otherwise $u_h(x) = \Gamma(u_h)(x)$ using \eqref{g-u}. By the first statement $ \partial_h u_h(x) = \partial \Gamma(u_h)(x) =\chi_{\Gamma(u_h)}(x) \neq \emptyset$ contradicting our assumption. 
By Lemma \ref{equi0} there exists $y \in \mathcal{N}_h$ such that $p \in  \chi_{\Gamma(u_h)}(x)  \cap \chi_{\Gamma(u_h)} (y)$ and $u_h(y)=\Gamma(u_h) (y)$. Thus $y \neq x$ since $\Gamma(u_h)(x) \neq u_h(x)$.

Finally the third statement follows from Lemma \ref{equi0} since for $x \in (\Conv (\mathcal{N}_h))^{\circ}$ and $x \notin \mathbb{Z}^d_h$ we have $x \notin \mathcal{N}_h$.  \Qed

\end{proof}

\begin{remark} If we define a mesh function $u_h$ to be nodal convex at $x \in \Omega_h$ when $ \partial_h u_h(x)\neq\emptyset$, then for a nodal convex mesh function at $x \in \Omega_h$, we have $\Gamma(u_h)(x) = u_h(x)$ by Lemma \ref{equivalence} (2). It is also proven in \cite[Lemma 2.1]{Nochetto19} that for a mesh function $u_h$ which is nodal convex at all $x \in \Omega_h$, $\partial \Gamma(u_h)(x) =  \partial_h u_h(x)$ for all $x \in \Omega_h$.

\end{remark}

For a subset $E \subset \Omega$, we define 
$$
\partial_h u_h(E) = \cup_{x \in E \cap  \mathbb{Z}^d_h} \partial_h u_h(x),
$$
and %formally define 
define the discrete $R$-curvature of $u_h$ as the set function
$$
\omega(R,u_h,E) = \int_{\partial_h u_h(E)} R(p) \ud p.
$$

\begin{theorem}  \label{equi-measures}
We have for $u_h \in \mathcal{U}_h$ and a subset $E \subset (\Conv (N_h))^\circ \subset \Omega$, $\partial_h u_h(E) = \partial \Gamma(u_h)(E)$ up to a set of measure 0 and thus
$$
\omega(R,u_h,E) = \omega(R,\Gamma(u_h),E).
$$
%Moreover $\chi_{\Gamma(u_h)}(\R^d)$ is uniformly bounded in $h$ when $\partial \Gamma(u_h)(\Omega_h)$ is.
%$$|\chi_{\Gamma(u_h)}(\R^d) | \leq C |\partial \Gamma(u_h)(\Omega)|,$$ with a constant $C$ independent of $h$.
\end{theorem}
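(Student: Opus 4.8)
The plan is to prove the set identity $\partial_h u_h(E) = \partial \Gamma(u_h)(E)$ modulo a Lebesgue-null set by splitting it into one exact inclusion and one inclusion that holds only up to a null set, and then to integrate $R$. Throughout I will use that $E \cap \mathbb{Z}^d_h = E \cap \Omega_h$ (because $E \subset \Omega$) and that on the open set $(\Conv(\mathcal{N}_h))^\circ \supseteq E$ the envelope agrees with its extension, so that $\partial \Gamma(u_h)(x) = \chi_{\Gamma(u_h)}(x)$ there by Lemma \ref{extension-convex}.

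First I would dispatch the easy inclusion $\partial_h u_h(E) \subseteq \partial \Gamma(u_h)(E)$. For $x \in E \cap \Omega_h$, if $\partial_h u_h(x) = \emptyset$ it contributes nothing; otherwise the contrapositive of Lemma \ref{equivalence}(2) gives $\Gamma(u_h)(x) = u_h(x)$, and then Lemma \ref{equivalence}(1) yields $\partial_h u_h(x) = \partial \Gamma(u_h)(x) \subseteq \partial \Gamma(u_h)(E)$. Taking the union over $x$ gives this inclusion exactly, with no null set needed.

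The substance is in the reverse inclusion. I would introduce the set $N$ of all $p \in \R^d$ that are subgradients of the proper convex extension $\widetilde{\Gamma}(u_h)$ at two distinct points, i.e. $p \in \chi_{\Gamma(u_h)}(x) \cap \chi_{\Gamma(u_h)}(y)$ for some $x \neq y$, and show $\partial \Gamma(u_h)(E) \setminus \partial_h u_h(E) \subseteq N$. Taking $p \in \partial \Gamma(u_h)(x)$ with $x \in E$ and $p \notin \partial_h u_h(E)$, I would distinguish three cases. If $x$ is a mesh point with $\Gamma(u_h)(x) = u_h(x)$, Lemma \ref{equivalence}(1) places $p$ in $\partial_h u_h(E)$, contradicting the assumption, so this case is void. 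If $x$ is a mesh point with $\Gamma(u_h)(x) \neq u_h(x)$, Lemma \ref{equivalence}(2) supplies a node $y \neq x$ with $p \in \chi_{\Gamma(u_h)}(x) \cap \chi_{\Gamma(u_h)}(y)$, so $p \in N$. If $x$ is not a mesh point, then $x \in (\Conv(\mathcal{N}_h))^\circ \setminus \mathbb{Z}^d_h$ and Lemma \ref{equivalence}(3) again produces such a $y \neq x$, so $p \in N$.

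It then remains to argue $|N| = 0$, which I expect to be the main obstacle, since everything above is bookkeeping on the earlier lemmas. This is the classical fact that the set of gradients of a finite convex function attained at two or more points is Lebesgue-null: passing to the conjugate of $v := \widetilde{\Gamma}(u_h)$, the relation $p \in \partial v(x) \cap \partial v(y)$ with $x \neq y$ is equivalent to $\{x,y\} \subseteq \partial v^*(p)$, so such $p$ are precisely the points of nondifferentiability of $v^*$, which form a null set because a convex function is differentiable almost everywhere. Combining the two inclusions yields $\partial_h u_h(E) \subseteq \partial \Gamma(u_h)(E) \subseteq \partial_h u_h(E) \cup N$, so the two images differ by a subset of $N$; as $R$ is locally integrable, integrating over each image gives $\omega(R,u_h,E) = \omega(R,\Gamma(u_h),E)$. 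The two subtleties I would watch are that $N$ must be defined through the genuinely finite extension $\widetilde{\Gamma}(u_h)$, so that the a.e.-differentiability of the conjugate applies to a proper convex function on $\R^d$, and that measurability of $\partial \Gamma(u_h)(E)$ is already guaranteed by the Borel $\sigma$-algebra framework recalled in the preliminaries.
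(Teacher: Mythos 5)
Your proposal is correct and follows essentially the same route as the paper's proof: the same exact forward inclusion $\partial_h u_h(E) \subset \partial \Gamma(u_h)(E)$ via Lemma \ref{equivalence}(1)--(2), the same case split (contact mesh point, non-contact mesh point, non-mesh point) reducing the reverse inclusion to the set of subgradients shared by two distinct points, and the same conclusion by integrating $R$ over images that agree up to a null set. The only divergence is that where the paper invokes \cite[Lemma 1.1.12]{Guti'errez2001} (applied on a bounded domain containing $\Omega$) for the measure-zero claim, you prove that claim directly by passing to the Legendre conjugate of $\widetilde{\Gamma}(u_h)$ and using a.e.\ differentiability of convex functions --- which is precisely the standard proof of that cited lemma, so the two arguments coincide in substance.
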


\begin{proof} It is enough to show that for $E \subset (\Conv (N_h))^\circ \subset \Omega$, up to a set of measure 0, we have
$\partial_h u_h(E) = \partial \Gamma(u_h)(E)$. By Lemma \ref{equivalence}, for $x \in \Omega_h$, either $\partial_h u_h(x)$ is empty or $\partial_h u_h(x)=\partial \Gamma(u_h)(x)$. Thus $\partial_h u_h(E) \subset  \chi_{\Gamma(u_h)}(E)$.

For the reverse inclusion, let $x \in E$. If $x \in \Omega_h$ and $\Gamma(u_h)(x)=u_h(x)$, by Lemma \ref{equivalence}, 
$\partial \Gamma(u_h)(x) = \partial_h u_h(x) \subset \partial_h u_h(E)$. If $x \in \Omega_h$ and $\Gamma(u_h)(x)\neq u_h(x)$ or $x \in (\Conv (N_h))^\circ \setminus \Omega_h$, by Lemma \ref{equivalence} %Theorem \ref{geo-proof}
$$
\partial \Gamma(u_h)(x) \subset \{ \, p \in \R^d, p \in  \chi_{\Gamma(u_h)}(y) \cap  \chi_{\Gamma(u_h)}(z), y, z \in \Conv (N_h), y \neq z\, \}.
$$ 
By Lemma \cite[1.1.12]{Guti'errez2001} (applied on a bounded domain $\widetilde{\Omega}$ such that $\Omega \subset U \subset \widetilde{\Omega}$ for an open set $U$), 
the set on the right of the above inclusion is contained in a set of measure 0. 
%Next, if $p \in \chi_{\Gamma(u_h)}(\R^d) \subset \Conv \big( \partial \Gamma(u_h)(\Omega_h) \big) $, 
%$p=\sum_{i=1}^M \lambda_i p_i$, $\sum_{i=1}^M \lambda_i =1$, $\lambda_i \in [0,1]$ for all $i$ and $p_i \in  \partial \Gamma(u_h)(\Omega_h)$. Thus if $\partial \Gamma(u_h)(\Omega_h)$ is bounded uniformly in $h$, so is $\chi_{\Gamma(u_h)}(\R^d) $. 
This completes the proof. \Qed

\end{proof}

%{\it measurability of $\partial_h u_h(E)$ based on properties of Monge-Amp\`ere measures?}

As in \cite{Nochetto19}, we define the contact set of the mesh function $u_h$ as 
$$
C_h^-(u_h) = \{ \, x \in \Omega_h, \Gamma(u_h)(x) = u_h(x) \, \}.
$$

By Theorem \ref{equi-measures}, $\cup_{x \in \Omega_h} \partial_h u_h(x) = \cup_{x \in C_h^-(u_h)} \partial \Gamma(u_h)(x)$ up to a set of measure 0. By Lemma \cite[1.1.12]{Guti'errez2001} we have for $x \neq y$, $|\partial \Gamma(u_h)(x) \cap \partial \Gamma(u_h)(y)|=0$. It thus follows from Lemma \ref{inter-meas} that $ \omega(1,u_h,\Omega_h) =  \sum_{x \in C_h^-(u_h)}  | \partial \Gamma(u_h)(x) |$ and thus by Lemma \ref{equivalence} 
%By  Lemma \ref{equivalence} and Theorem \ref{geo-proof}, we have
\begin{equation} \label{mass-gamma}
%\sum_{x \in C_h^-(u_h)}  |\partial_h u_h(x)| =
 \omega(1,u_h,\Omega_h) =  \sum_{x \in C_h^-(u_h)}  | \partial \Gamma(u_h)(x) | = \sum_{x \in C_h^-(u_h)}  |\partial_h u_h(x)| = \sum_{x \in \Omega_h}  |\partial_h u_h(x)|. %= \omega(1,\widetilde{\Gamma}(u_h),\R^d). 
 %\omega(1,u_h,\Omega_h) \leq \sum_{x \in \Omega_h}  |\partial_h u_h(x)| = \sum_{x \in C_h^-(u_h)}  |\partial_h u_h(x)|. 
\end{equation}
%The last equality is proven as in the continuous case \cite[Theorem 1.1.13]{Guti'errez2001}. For details, we refer to \cite{awanou2019uweakcvg}. 

The discrete Aleksandrov-Bakelman-Pucci maximum principle \cite{Nochetto14} can be stated as follows.

\begin{lemma} \label{d-stab}
Let $u_h \in \mathcal{U}_h$ such that $u_h \geq 0$ on $\partial \Omega_h$. Then for 
$x \in \Omega_h$ 
$$
 u_h(x) \geq - C(d) \bigg[ \diam(\Omega)^{d-1} d(x,\partial \Omega)  \omega(1,u_h,\Omega_h) \bigg]^{\frac{1}{d}},
$$
for a positive constant $C(d)$ which depends only on $d$. 
\end{lemma}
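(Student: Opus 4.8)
The plan is to transfer the statement to the convex envelope $\Gamma(u_h)$ and then invoke the classical pointwise Aleksandrov--Bakelman--Pucci estimate for convex functions. Since $\Gamma(u_h)(x) \leq u_h(x)$ for all $x \in \mathcal{N}_h$ by \eqref{g-u}, it suffices to produce the lower bound for $\Gamma(u_h)(x)$: any bound $\Gamma(u_h)(x) \geq -M$ gives at once $u_h(x) \geq \Gamma(u_h)(x) \geq -M$ (and when $\Gamma(u_h)(x)\geq 0$ the asserted inequality is trivial, its right-hand side being nonpositive). The first substantive step is to show that $\Gamma(u_h) \geq 0$ on $\partial \Conv(\mathcal{N}_h)$. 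Every boundary point of the polytope $\Conv(\mathcal{N}_h)$ lies in a proper face $F$, and by \cite[Theorem 1]{Tawarmalani} (exactly as in the proof of Lemma \ref{cv-bd0}) one has $F = \Conv(B)$ with $B \subseteq \partial\Omega_h$, the restriction of $\Gamma(u_h)$ to $F$ being the convex envelope of $u_h|_B$. Since $u_h \geq 0$ on $\partial\Omega_h$ and the convex envelope of a nonnegative function is nonnegative, this yields $\Gamma(u_h) \geq 0$ on $\partial\Conv(\mathcal{N}_h)$.

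Next I would apply the continuous pointwise ABP estimate to the proper convex function $v = \Gamma(u_h)$ on the bounded convex domain $\Omega' = \Conv(\mathcal{N}_h)$, on which it is continuous (by the discussion of convex envelopes preceding Lemma \ref{cv-bd0} together with Lemma \ref{continuity-convex}) and nonnegative on $\partial\Omega'$. This gives, for $x \in \Omega'$,
\[
\big(-\Gamma(u_h)(x)\big)_+^d \leq C(d)\, d(x,\partial\Omega')\,\diam(\Omega')^{d-1}\,\big|\partial\Gamma(u_h)(\Omega')\big|.
\]
The geometric factors are controlled by the ambient ones: since $\mathcal{N}_h \subset \tir{\Omega}$ and $\tir{\Omega}$ is convex we have $\Omega' \subseteq \tir{\Omega}$, whence $\diam(\Omega') \leq \diam(\Omega)$; and the largest ball about $x$ contained in $\Omega'$ is also contained in $\tir{\Omega}$, so $d(x,\partial\Omega') \leq d(x,\partial\Omega)$.

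It remains to bound the subdifferential mass $|\partial\Gamma(u_h)(\Omega')|$ by $\omega(1,u_h,\Omega_h)$, and this is where the machinery of Section \ref{discrete} enters and where I expect the main difficulty. Applying Theorem \ref{equi-measures} with $E = (\Conv(\mathcal{N}_h))^\circ$ shows $\partial\Gamma(u_h)(E) = \partial_h u_h(E)$ up to a null set, and since $E \cap \mathbb{Z}^d_h = \Omega_h$ this identifies the \emph{interior} subdifferential mass with $\omega(1,u_h,\Omega_h)$ via \eqref{mass-gamma}. The delicate point is that the ABP estimate a priori involves the subdifferential over all of $\Omega'$, including points of $\partial\Conv(\mathcal{N}_h)$, where a convex function may carry extra, unbounded subgradients. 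The resolution is that the cone construction underlying the pointwise estimate only uses bounded subgradients supporting $\Gamma(u_h)$ from below with contact in the region where $\Gamma(u_h) < 0$; because $\Gamma(u_h) \geq 0$ on $\partial\Omega'$ these contact points are interior, and by the piecewise affinity of $\Gamma(u_h)$ they may be taken to be interior contact nodes in $C_h^-(u_h) \subseteq \Omega_h$. Making this reduction precise, so that only the interior mass $\omega(1,u_h,\Omega_h)$ and not spurious boundary subgradients appears in the bound, is the crux of the argument; the remaining steps are bookkeeping with the inequalities already assembled.
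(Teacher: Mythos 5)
Your argument is correct, but note that the paper never proves Lemma \ref{d-stab} at all: it is stated as a quotation of the discrete Aleksandrov--Bakelman--Pucci estimate from \cite{Nochetto14}. What you have done is essentially reconstruct that reference's proof inside the paper's own framework: reduce to $\Gamma(u_h)$ via \eqref{g-u}; show $\Gamma(u_h)\geq 0$ on $\partial \Conv(\mathcal{N}_h)$ by the face argument of Lemma \ref{cv-bd0} (which, as you observe, only needs $u_h\geq 0$ on $\partial\Omega_h$, since the envelope of a nonnegative function over a face $F=\Conv(B)$, $B\subset\partial\Omega_h$, is nonnegative); apply the classical Aleksandrov estimate on the polytope; and identify the normal-image mass. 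One useful remark: the step you flag as the ``crux'' is not actually a difficulty. The continuous estimate (e.g. \cite[Theorem 1.4.2]{Guti'errez2001}, whose cone/contact-point proof goes through verbatim when the boundary values are merely nonnegative, precisely because a supporting plane slid up to first contact is nonpositive on the boundary and must therefore touch at an interior point) involves only the subdifferential image of the \emph{open} set $E=(\Conv(\mathcal{N}_h))^{\circ}$, so boundary subgradients never enter. Since $\mathcal{N}_h\cap\partial\Conv(\mathcal{N}_h)=\partial\Omega_h$, one has $E\cap\mathbb{Z}^d_h=\Omega_h$, and Theorem \ref{equi-measures} applied with this $E$ (together with \eqref{mass-gamma}, or directly from the definition of $\omega(1,u_h,\cdot)$) gives $|\partial\Gamma(u_h)(E)|=\omega(1,u_h,E)=\omega(1,u_h,\Omega_h)$ with no need to track contact nodes in $C_h^-(u_h)$. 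With that observation your proof closes completely; what it buys, compared with the paper's bare citation, is a self-contained derivation of the discrete ABP estimate from results already established in the paper.
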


Note that $\omega(R,u_h,E)=\omega(R,u_h,\{ \, x \, \})$ for $|E|$ sufficiently small and $x \in E$. We will make the abuse of notation
$$
\omega(R,u_h, x )  = \omega(R,u_h,\{ \, x \, \}).
$$

\begin{definition} \label{mass}
We refer to $\omega(1,v_h,\Omega_h)$ as  Monge-Amp\`ere mass of the mesh function $v_h$.
\end{definition}

It can be shown that the maximum of a discrete convex convex function occurs on $\partial \Omega_h$. Mesh functions with Monge-Amp\`ere masses uniformly bounded have a uniform lower bound by Lemma \ref{d-stab}. Therefore discrete convex mesh functions with uniformly bounded Monge-Amp\`ere masses are uniformly bounded when their boundary values are uniformly bounded.

%\begin{rem} 
We now give a geometric proof of a refined version of Lemma \ref{equivalence} (2)--(3) based on the observation that the graph of the convex envelope $\Gamma(u_h)$ coincides on $\Conv (\mathcal{N}_h)$ with the lower part of the convex polyhedron which is the convex hull $S$ of the points $(x, u_h(x)) \in \R^{d+1}, x \in \mathcal{N}_h$.
%\end{rem}
It follows immediately that $\Gamma(u_h)$ is continuous on $\Conv (\mathcal{N}_h)$.

The lower faces of $S$ form the graph of the convex envelope of $u_h$ over $\Conv (\mathcal{N}_h)$. 
We recall that a lower face $F$ is a face such that for $x \in F$, $x-(0,\ldots0,\lambda) \notin S$ for all $\lambda>0$.
The projections onto $\R^d$ of the vertices of the lower faces of $S$ form a convex decomposition of
$\Conv(\mathcal{N}_h)$. Each element of the decomposition is a convex hull of projections onto $\R^d$ of vertices of lower faces of $S$. Each vertex of the decomposition is a point $x \in \mathcal{N}_h$ where $\Gamma(u_h)(x) = u_h(x)$. Each element $C$ of the decomposition can be further subdivided into simplices with vertices among the vertices of $C$. This results in a triangulation of $\Conv(\mathcal{N}_h)$ on which $\Gamma(u_h)$ is piecewise linear. We will refer to such a triangulation as an induced triangulation.

%a triangulation of $\Conv(\mathcal{N}_h)$, called induced triangulation, with vertices points  Moreover $\Gamma(u_h)$ is piecewise linear over an induced triangulation. %Note that the elements of an induced triangulation are convex hulls of the projections onto $\R^d$ of the vertices of the lower faces of $S$. %If for example the graph of $\Gamma(u_h)$ is a constant function, all points of $\Omega_h$ can be taken as vertices of an induced triangulation. 

Let $\mathcal{T}(\Gamma(u_h))$ denote a triangulation induced on $\Conv (\mathcal{N}_h)$ by $\Gamma(u_h)$. We will need below the following theorem

\begin{theorem} \cite[Theorem 3]{ioffe2009theory} \label{extreme}
Let $S$ be a finite set and $f_s$ be convex for each fixed $s \in S$ on $X\subset \R^d$ and continuous in some neighborhood of $x_0 \in X$. Let
$
f(x) = \sup \{ \, f_s(x), s \in S \, \}
$ and define $S(x) =  \{ \,s \in S, f_s(x)=f(x)  \, \}$. %Put $f_s(x)=f(s,x)$. 
Then
$$
\partial f(x_0) = \tir{\Conv \bigg( \cup_{s \in S(x_0)} \partial f_s(x_0)} \bigg). 
$$

\end{theorem}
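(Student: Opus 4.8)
The plan is to prove the two inclusions of the claimed identity separately, the nontrivial direction resting on a separation argument combined with the support-function description of the one-sided directional derivative. Throughout I would write $K = \tir{\Conv ( \cup_{s \in S(x_0)} \partial f_s(x_0))}$. Since $S$ is finite the supremum is attained, so $f = \max_{s \in S} f_s$ is convex and continuous near $x_0$; consequently $\partial f(x_0)$ is nonempty, convex, and compact, and likewise each $\partial f_s(x_0)$ is nonempty, convex, and compact. Because a finite union of compact sets is compact and, in $\R^d$, the convex hull of a compact set is compact, $K$ is in fact compact, so the closure bar is redundant here but harmless.

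First I would dispatch the easy inclusion $K \subset \partial f(x_0)$. Fix an active index $s \in S(x_0)$, so that $f_s(x_0) = f(x_0)$, and take $p \in \partial f_s(x_0)$. Then for every $x$ one has $f(x) \geq f_s(x) \geq f_s(x_0) + p \cdot (x - x_0) = f(x_0) + p \cdot (x - x_0)$, whence $p \in \partial f(x_0)$. Thus $\cup_{s \in S(x_0)} \partial f_s(x_0) \subset \partial f(x_0)$, and since $\partial f(x_0)$ is closed and convex it contains $K$.

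The reverse inclusion $\partial f(x_0) \subset K$ is the substance of the theorem, and I would argue it by contradiction. Suppose $p \in \partial f(x_0) \setminus K$. As $K$ is compact and convex, the separation theorem yields a direction $v \in \R^d$ and $\alpha \in \R$ with $q \cdot v \leq \alpha < p \cdot v$ for all $q \in K$; in particular $\max_{q \in \partial f_s(x_0)} q \cdot v \leq \alpha$ for each active $s$. Recalling that for a convex function $g$ continuous near $x_0$ the directional derivative $g'(x_0; v) := \lim_{t \to 0^+} t^{-1}\big( g(x_0 + tv) - g(x_0) \big)$ equals $\max_{q \in \partial g(x_0)} q \cdot v$, this reads $f_s'(x_0; v) \leq \alpha$ for all $s \in S(x_0)$, while $p \in \partial f(x_0)$ forces $p \cdot v \leq f'(x_0; v)$. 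The contradiction closes once I establish the key identity
$$f'(x_0; v) = \max_{s \in S(x_0)} f_s'(x_0; v),$$
for then $p \cdot v \leq f'(x_0; v) = \max_{s \in S(x_0)} f_s'(x_0; v) \leq \alpha < p \cdot v$.

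It remains to prove this directional-derivative identity, which I regard as the main obstacle. The inequality $\geq$ is immediate: for active $s$ the difference quotient of $f$ dominates that of $f_s$, since $f \geq f_s$ with equality at $x_0$. For $\leq$ the crucial observation is that inactive indices are irrelevant along $v$ for small $t$: if $s \notin S(x_0)$ then $f_s(x_0) < f(x_0)$, and the continuity of the finitely many $f_s$ together with that of $f$ forces $f_s(x_0 + tv) < f(x_0 + tv)$ for all sufficiently small $t > 0$, so that $f(x_0 + tv) = \max_{s \in S(x_0)} f_s(x_0 + tv)$ on a one-sided neighborhood of $t = 0$. Dividing by $t$, subtracting $f(x_0) = f_s(x_0)$, and passing to the limit $t \to 0^+$ (legitimate for a finite maximum of convergent quotients) yields the identity. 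The delicate points are thus the compactness of $K$ that makes the separation clean, and the continuity-and-finiteness argument excluding inactive branches on a full one-sided neighborhood of $t = 0$; everything else is bookkeeping.
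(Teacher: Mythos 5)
Your proposal is correct, but note that the paper does not actually prove this statement: it is quoted verbatim from Ioffe--Tihomirov \cite[Theorem 3]{ioffe2009theory} and used as a black box, so there is no internal proof to compare against. Your argument is the classical one for this result (sometimes attributed to Dubovitskii--Milyutin): the easy inclusion via the subgradient inequality for active indices, and the hard inclusion by strict separation of a point $p \in \partial f(x_0)\setminus K$ from the compact convex set $K$, reduced to the directional-derivative identity $f'(x_0;v)=\max_{s\in S(x_0)} f_s'(x_0;v)$ together with the max formula $g'(x_0;v)=\max_{q\in\partial g(x_0)} q\cdot v$ for convex $g$ continuous at $x_0$. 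All the steps check out: finiteness of $S$ guarantees $S(x_0)\neq\emptyset$ and that $K$ is compact (so the closure is indeed redundant in $\R^d$ and separation is strict), continuity near $x_0$ gives nonempty compact subdifferentials and the validity of the max formula, and your exclusion of inactive indices on a one-sided neighborhood of $t=0$ correctly exploits that there are only finitely many of them, so the threshold in $t$ can be taken uniform. The one hypothesis you lean on silently is that $x_0$ lies in the interior of the set where all the $f_s$ are defined, which is exactly what ``continuous in some neighborhood of $x_0$'' supplies, so the points $x_0+tv$ are legitimate for small $t>0$. What your write-up buys, compared with the paper's citation, is a self-contained verification that also makes visible precisely where finiteness and continuity enter; what it costs is reliance on two standard convex-analysis facts (the max formula and compactness of the subdifferential at a point of continuity), which you would want to cite, e.g., from Rockafellar, to make the argument fully rigorous.
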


\begin{lemma}
We have
for $x \in \R^d$
\begin{equation} \label{induced-x}
\widetilde{\Gamma}(u_h)(x) = \max \{ \, L(x), L=\Gamma(u_h)|_T, T \in \mathcal{T}(\Gamma(u_h)) \, \}.
\end{equation}

\end{lemma}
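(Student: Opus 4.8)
The plan is to denote by $G$ the function on the right-hand side of \eqref{induced-x}, that is $G(x)=\max\{\,L_T(x): T\in\mathcal{T}(\Gamma(u_h))\,\}$, where for each full-dimensional simplex $T$ of the induced triangulation $L_T$ is the affine function on $\R^d$ obtained by extending $\Gamma(u_h)|_T$. As the maximum of finitely many affine functions, $G$ is convex and piecewise linear on all of $\R^d$, and I will establish the two inequalities $G\le\widetilde{\Gamma}(u_h)$ and $\widetilde{\Gamma}(u_h)\le G$ separately. Throughout I use the extension formula \eqref{extension} for $\widetilde{\Gamma}(u_h)$ with base points $y$ ranging over the open set $(\Conv(\mathcal{N}_h))^{\circ}$ and $q\in\partial\Gamma(u_h)(y)$, which is consistent with the already established identity $\Gamma(u_h)=\widetilde{\Gamma}(u_h)$ on $\Conv(\mathcal{N}_h)$.

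First I would record two elementary facts. For any $T$ and any point $y_0$ interior to $T$, the function $\Gamma(u_h)$ agrees with the affine map $L_T$ on a full-dimensional neighborhood of $y_0$, so it is differentiable there with $\nabla L_T\in\partial\Gamma(u_h)(y_0)$ and $\Gamma(u_h)(y_0)=L_T(y_0)$; convexity of $\Gamma(u_h)$ then forces $L_T\le\Gamma(u_h)$ on $\Conv(\mathcal{N}_h)$, while every point of $\Conv(\mathcal{N}_h)$ lies in some $T$ where $L_T=\Gamma(u_h)$. Hence $G=\Gamma(u_h)$ on $\Conv(\mathcal{N}_h)$. The lower bound is then immediate: the affine function in \eqref{extension} attached to an interior point $y_0$ of $T$ and the subgradient $\nabla L_T$ is exactly $\Gamma(u_h)(y_0)+\nabla L_T\cdot(x-y_0)=L_T(x)$, so each $L_T$ occurs among the supporting affine functions defining $\widetilde{\Gamma}(u_h)$, giving $L_T\le\widetilde{\Gamma}(u_h)$ and therefore $G\le\widetilde{\Gamma}(u_h)$ on $\R^d$.

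The substantive step is the reverse inequality, where I would show that each supporting affine function $\ell(x)=\Gamma(u_h)(y)+q\cdot(x-y)$ from \eqref{extension}, with $y\in(\Conv(\mathcal{N}_h))^{\circ}$ and $q\in\partial\Gamma(u_h)(y)$, satisfies $\ell\le G$. Because $\Gamma(u_h)$ coincides near the interior point $y$ with the finite maximum $\max_T L_T$ of continuous (affine) functions, Theorem \ref{extreme} applies and yields $\partial\Gamma(u_h)(y)=\Conv\{\,\nabla L_T: T\in S(y)\,\}$, where $S(y)=\{\,T: L_T(y)=\Gamma(u_h)(y)\,\}$. Writing $q=\sum_{T\in S(y)}\lambda_T\nabla L_T$ as a convex combination and using $\sum_{T\in S(y)}\lambda_T=1$ together with $L_T(y)=\Gamma(u_h)(y)$ for $T\in S(y)$, a direct computation collapses $\ell$ into $\sum_{T\in S(y)}\lambda_T L_T\le\max_T L_T=G$. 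Taking the supremum over all admissible $(y,q)$ gives $\widetilde{\Gamma}(u_h)\le G$, which combined with the lower bound proves \eqref{induced-x}.

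I expect the main obstacle to be the invocation of Theorem \ref{extreme} in the reverse inequality. One must restrict the base points $y$ to the interior of $\Conv(\mathcal{N}_h)$, since the representation $\Gamma(u_h)=\max_T L_T$ is only valid on $\Conv(\mathcal{N}_h)$ and Theorem \ref{extreme} requires the $L_T$ to be precisely the active pieces on a neighborhood of the base point; and one must identify the subdifferential of $\Gamma(u_h)$ at the interior point $y$ with that of the globally defined convex function $G$, which is legitimate because subgradients of a convex function are determined by its local behavior. Points of $\partial\Conv(\mathcal{N}_h)$ never arise as base points, so no separate boundary analysis is needed.
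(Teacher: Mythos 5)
Your proof is correct and follows essentially the same route as the paper's: both establish $G\le\widetilde{\Gamma}(u_h)$ by noting each piece $L_T$ is a supporting affine function of $\Gamma(u_h)$ at interior points of $T$, and both prove the reverse inequality by invoking Theorem \ref{extreme} to write any subgradient $q\in\partial\Gamma(u_h)(y)$, $y\in(\Conv(\mathcal{N}_h))^{\circ}$, as a convex combination of the gradients of the active pieces and then collapsing the supporting affine function into $\sum\lambda_T L_T\le G$. Your explicit justification that subgradients of $\Gamma(u_h)$ at interior points coincide with those of the globally defined max (locality of subgradients) is a point the paper leaves implicit, but it is the same argument.
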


\begin{proof}
The left hand side of \eqref{induced-x} defines a convex function $F(x)$ on $\R^d$. By construction we have
$$
\widetilde{\Gamma}(u_h)(x) = \Gamma(u_h)(x) = F(x), x \in \Conv (\mathcal{N}_h).
$$
By the definition \eqref{extension} of convex extension, we have
\begin{equation} \label{ind-01}
F(x) \leq \widetilde{\Gamma}(u_h)(x) \ \forall x \in \R^d.
\end{equation}
Let $y \in (\Conv (\mathcal{N}_h))^\circ$ and $p \in \partial \Gamma(u_h)(y)$. %Since $ \partial \Gamma(u_h)(y)\neq \emptyset$, $u_h$ is nodal convex at $y$. {\bf This would be when $y \in \Omega_h$} By \cite[Lemma 2.4]{Nochetto19}, % or \cite[]{Bakelman1994}, 
Recall that if $Z \subset \R^d$ is compact, $\Conv(Z)$ is closed.  
By Theorem  \ref{extreme}, $p$ is in the convex hull of the constant gradients $D \Gamma(u_h)|_T$ for $T \in  \mathcal{T}(\Gamma(u_h))$ containing $y$. That is, we can find elements $T_i, i=1,\ldots,N$,
$T_i \in  \mathcal{T}(\Gamma(u_h))$ for all $i$ and $\lambda_i \in [0,1], i=1,\ldots,N$ such that
$\sum_{i=1}^N \lambda_i =1$ and
$$
p=\sum_{i=1}^N \lambda_i D \Gamma(u_h)|_{T_i}.
$$
We have for $x \in \R^d$
\begin{align*}
\Gamma(u_h)(y) + p \cdot (x-y) & = \Gamma(u_h)(y) + \sum_{i=1}^N \lambda_i D \Gamma(u_h)|_{T_i} \cdot (x-y)  \\
& = \sum_{i=1}^N \lambda_i \bigg(\Gamma(u_h)(y) + D \Gamma(u_h)|_{T_i} \cdot (x-y) \bigg) \\
& \leq \sum_{i=1}^N \lambda_i  F(x) = F(x),
\end{align*}
where we observed that $\Gamma(u_h)(y) + D \Gamma(u_h)|_{T_i} \cdot (x-y)$ is the restriction to $T_i$ of $\Gamma(u_h)$.
By \eqref{ind-01} and the definition of convex extension, we obtain $F(x) \leq \widetilde{\Gamma}(u_h)(x) \leq F(x) \ \forall x \in \R^d$.

\Qed
\end{proof}

\begin{theorem} \label{geo-proof}
$\forall x\in (\Conv (N_h))^\circ$ and $p \in  \partial \Gamma(u_h)(x)$,  $\exists y \in \mathcal{N}_h$ such that $p \in   \chi_{\Gamma(u_h)}(x) \cap \chi_{\Gamma(u_h)}(y)$ and $\Gamma(u_h)(y) = u_h(y)$.  %and $|x-y|<\epsilon$. 
Thus $\partial \Gamma(u_h) ((\Conv (N_h))^\circ) \subset \chi_{\Gamma(u_h)}(\mathcal{N}_h)$.
If $x \in \Omega_h$ and $\Gamma(u_h)(x) \neq u_h(x)$, we can choose $y \in \mathcal{N}_h$ such that $y \neq x$. 
Moreover
 $\chi_{\Gamma(u_h)}(\R^d) \subset \Conv \big( \chi_{\Gamma(u_h)}(\mathcal{N}_h) \big)$.
% = \Conv \big( \partial_h u_h(\Omega_h) \big)$. % =  \partial \Gamma(u_h)(\Omega_h)$.

\end{theorem}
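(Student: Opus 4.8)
The plan is to combine the piecewise-linear representation \eqref{induced-x} with the elementary geometry of supporting hyperplanes of $\widetilde{\Gamma}(u_h)$, and to reduce the final inclusion to Lemma \ref{ext-subd}. First I would record that since $x \in (\Conv (\mathcal{N}_h))^{\circ}$, Lemma \ref{extension-convex} (applied with $\Omega$ replaced by $(\Conv(\mathcal{N}_h))^{\circ}$ and $u=\Gamma(u_h)$) gives $\partial \Gamma(u_h)(x) = \chi_{\Gamma(u_h)}(x)$, so $p \in \chi_{\Gamma(u_h)}(x)$ and the affine function $L(z) = \Gamma(u_h)(x) + p\cdot(z-x)$ is a global supporting hyperplane of the extension: $\widetilde{\Gamma}(u_h)(z) \geq L(z)$ for all $z \in \R^d$, with equality at $x$. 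I then introduce the contact set $K_0 = \{\, z \in \Conv(\mathcal{N}_h) : \widetilde{\Gamma}(u_h)(z) = L(z)\,\}$, which is a nonempty (it contains $x$) compact convex subset of the polytope $\Conv(\mathcal{N}_h)$.

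The heart of the argument is to show that $K_0$ has an extreme point lying in $\mathcal{N}_h$ at which $\Gamma(u_h)=u_h$. Using \eqref{induced-x}, on each simplex $T$ of the induced triangulation $\mathcal{T}(\Gamma(u_h))$ the function $\Gamma(u_h)$ is affine and dominates $L$, so $K_0 \cap T = \{\, z \in T : (\Gamma(u_h)|_T - L)(z) = 0 \,\}$ is the zero set of a nonnegative affine function on a simplex, hence a face of $T$. Thus $K_0$ is a union of faces of the triangulation, and any extreme point $y$ of the compact convex set $K_0$ is extreme in each $K_0 \cap T$ containing it, i.e. a vertex of a simplex of $\mathcal{T}(\Gamma(u_h))$. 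By the construction of the induced triangulation such a vertex is a node $y \in \mathcal{N}_h$ with $\Gamma(u_h)(y)=u_h(y)$. Since $y \in K_0$, the hyperplane $L$ also supports $\widetilde{\Gamma}(u_h)$ at $y$, which is exactly $p \in \chi_{\Gamma(u_h)}(y)$, giving the first assertion; the inclusion $\partial \Gamma(u_h)((\Conv (\mathcal{N}_h))^\circ) \subset \chi_{\Gamma(u_h)}(\mathcal{N}_h)$ then follows immediately. For the refinement, if $x \in \Omega_h$ with $\Gamma(u_h)(x) \neq u_h(x)$, then $x$ is not a triangulation vertex, so every extreme point of $K_0$ (all of which are such vertices) differs from $x$, and I may choose $y \neq x$.

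For the final inclusion I would invoke Lemma \ref{ext-subd} with $\Omega$ replaced by $(\Conv(\mathcal{N}_h))^{\circ}$ and $u=\Gamma(u_h)$, which is proper, bounded and convex there. Writing $\Omega^* = \partial \Gamma(u_h)((\Conv(\mathcal{N}_h))^{\circ})$, the second assertion gives $\Omega^* \subset \chi_{\Gamma(u_h)}(\mathcal{N}_h)$. Since $\mathcal{N}_h$ is finite and each $\chi_{\Gamma(u_h)}(y)$ is compact and convex, $\chi_{\Gamma(u_h)}(\mathcal{N}_h)$ is compact, whence $\Omega^*$ is bounded and $\overline{\Omega^*} \subset \chi_{\Gamma(u_h)}(\mathcal{N}_h)$. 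Lemma \ref{ext-subd} then yields $\chi_{\Gamma(u_h)}(\R^d) \subset \Conv(\overline{\Omega^*}) \subset \Conv(\chi_{\Gamma(u_h)}(\mathcal{N}_h))$, as required.

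I expect the delicate step to be the claim that the extreme points of the contact set $K_0$ are genuine nodes of $\mathcal{N}_h$ where $\Gamma(u_h)=u_h$, rather than arbitrary points of $\partial \Conv(\mathcal{N}_h)$; this is precisely where the piecewise-linear structure supplied by \eqref{induced-x} and the combinatorics of $\mathcal{T}(\Gamma(u_h))$ are indispensable, since one must argue that $K_0$ is a subcomplex whose extreme points are triangulation vertices. A secondary point to confirm is that the extension produced by Lemma \ref{ext-subd} coincides with $\widetilde{\Gamma}(u_h)$; this is immediate because $\Gamma(u_h)=\widetilde{\Gamma}(u_h)$ on $\Conv(\mathcal{N}_h)$ and both arise from the procedure \eqref{extension}. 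I also note that one could reach the convex-combination representation $p = \sum_i \lambda_i D\Gamma(u_h)|_{T_i}$ by applying Theorem \ref{extreme} to \eqref{induced-x}, but the contact-set route has the advantage of landing $p$ itself at a single node rather than only its extremal generators.
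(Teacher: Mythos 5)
Your proposal is correct. It shares the paper's overall skeleton: both proofs rest on the induced triangulation $\mathcal{T}(\Gamma(u_h))$, whose vertices are nodes of $\mathcal{N}_h$ at which $\Gamma(u_h)=u_h$, both use the representation \eqref{induced-x}, and both conclude the final inclusion $\chi_{\Gamma(u_h)}(\R^d) \subset \Conv\big(\chi_{\Gamma(u_h)}(\mathcal{N}_h)\big)$ in essentially the identical way, via boundedness of $\partial \Gamma(u_h)\big((\Conv(\mathcal{N}_h))^\circ\big)$, Lemma \ref{ext-subd}, and closedness of the finite union $\chi_{\Gamma(u_h)}(\mathcal{N}_h)$. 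Where you genuinely diverge is the core step producing the node $y$. The paper argues locally at $x$: it takes a simplex $T$ of the induced triangulation containing $x$ and splits into cases; if $x \in T^\circ$ then $p$ is the gradient of $\Gamma(u_h)|_T$ and any vertex of $T$ serves, while if $x \in \partial T$ it invokes Theorem \ref{extreme} together with the inclusion of the active sets $\omega(x) \subset \omega(y)$ for a vertex $y$ of $T$. You instead argue globally: the contact set $K_0$ of the supporting hyperplane $L$ is a nonempty compact convex set meeting each simplex in a face (the zero set of a nonnegative affine function on a simplex), hence a union of faces of the triangulation, so its extreme points are triangulation vertices; any extreme point is the desired node, and it automatically differs from $x$ when $\Gamma(u_h)(x) \neq u_h(x)$ because such an $x$ cannot be a vertex. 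Your route buys a cleaner, case-free argument that dispenses with Theorem \ref{extreme} at this step (only \eqref{induced-x} is needed), and it is in effect a geometric counterpart of the paper's analytic Lemma \ref{equi0}, which locates a contact node by minimizing $u_h - L$ over $\mathcal{N}_h$; what the paper's local argument buys instead is that $y$ may be taken to be a vertex of a simplex containing $x$, i.e.\ geometrically near $x$, a locality your extreme point need not enjoy (harmless here, since the theorem does not ask for it). The one point both you and the paper treat lightly, and which you at least flag explicitly, is the identification of the extension of $\Gamma(u_h)$ from $(\Conv(\mathcal{N}_h))^\circ$ via \eqref{extension} with $\widetilde{\Gamma}(u_h)$, so that Lemmas \ref{extension-convex} and \ref{ext-subd} apply with $\chi_u = \chi_{\Gamma(u_h)}$; your resolution through \eqref{induced-x} is adequate.
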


\begin{proof}
%Let $S$ denote the convex hull of the points $(x, u_h(x)) \in \R^{d+1}, x \in \mathcal{N}_h$. It is known that $S$ is a convex polyhedron. 

%Define $$D=\Conv \{ \, D  \Gamma(u_h)|_{T}, T \in \mathcal{T}(\Gamma(u_h)) \, \}.$$

Let $x \in \Omega_h$ such that $\Gamma(u_h)(x) \neq u_h(x)$. %Note that elements of an induced triangulation have at least one interior vertex. 
Let $y$ be a vertex of an element $T$ of an induced triangulation such that $x \in T$. By construction $y \in  \mathcal{N}_h$. Since $\Gamma(u_h)(x) \neq u_h(x)$ and $x \in \Omega_h$, $x$ cannot be a vertex of $T$. Thus $x \neq y$. If $x \in T^\circ$ then for $p \in \partial \Gamma(u_h)(x)$, $p$ is the gradient of the linear function on $T$ which is the restriction on $T$ of the convex envelope. We conclude that $p \in \partial \Gamma(u_h)(x) \cap \chi_{\Gamma(u_h)}(y)$. If $x \in \partial T$ and $x$ is not a vertex of $T$,
we have 
$$
\Gamma(u_h)(x) = \max \{\,  L(x), L=\Gamma(u_h)|_T, T \in \omega(x) \, \},
$$
where $\omega(x)$ is the union of the elements of $\mathcal{T}(\Gamma(u_h))$ which contain $x$. But
$\omega(x) \subset \omega(y)$ for a vertex $y$. Thus $p \in \partial \Gamma(u_h)(x) \cap \chi_{\Gamma(u_h)}(y)$ as well using Theorem \ref{extreme}.

%Moreover, since $y$ is an interior vertex, $\Gamma(u_h)(y)=u_h(y)$ and by \cite[Lemma 2.4]{Nochetto19}, $\partial \Gamma(u_h)(y) \subset C$. Therefore $\partial \Gamma(u_h)(x) \subset C$ as well.

If  $x \in \Omega_h$, we can choose $y=x$. %Moreover, if $\Gamma(u_h)(x) = u_h(x)$, $\partial \Gamma(u_h)(x) \subset C$.
If $x \in (\Conv (N_h))^\circ$ and $x \notin \Omega_h$, the same argument gives the existence of a vertex $y \in  \mathcal{N}_h$ such that for $p \in \partial \Gamma(u_h)(x)$, $p \in \partial \Gamma(u_h) (x) \cap \chi_{\Gamma(u_h)}(y)$. %We next consider the case $x \in \partial \Omega_h$. 
%Again here $\partial \Gamma(u_h)(x) \subset C$.
%Recall that at an interior vertex $y$, $\Gamma(u_h)(y) = u_h(y)$ and by Lemma \ref{equivalence} (1), $ \partial \Gamma(u_h)(y) = \partial_h u_h(y)$. 
We conclude that
\begin{equation*}
\partial \Gamma(u_h) ((\Conv (N_h))^\circ) \subset   \chi_{\Gamma(u_h)}(\mathcal{N}_h). % \subset C. 
\end{equation*}

Using the representation \eqref{induced-x} and Theorem \ref{extreme}, we have $$ \chi_{\Gamma(u_h)}(\mathcal{N}_h) \subset \Conv \{ \, D  \Gamma(u_h)|_{T}, T \in \mathcal{T}(\Gamma(u_h)) \, \}.$$

Therefore $\partial \Gamma(u_h) ((\Conv (N_h))^\circ)$ is bounded and by Lemma \ref{ext-subd}
%\begin{multline*}
$$
\chi_{\Gamma(u_h)}(\R^d) \subset \Conv \bigg( \tir{\partial \Gamma(u_h) ((\Conv (N_h))^\circ)} \bigg) \subset \Conv \big(  \tir{ \chi_{\Gamma(u_h)}(\mathcal{N}_h)} \big). %\\ \subset \Conv \{ \, D  \Gamma(u_h)|_{T}, T \in \mathcal{T}(\Gamma(u_h)) \, \}.
$$
%\end{multline*}
But for each $x \in \Omega_h$, $\chi_{\Gamma(u_h)}(x)$ is a closed set and thus $\chi_{\Gamma(u_h)}(\mathcal{N}_h)$ is closed as a finite union of closed sets and hence compact. Therefore 
$
\chi_{\Gamma(u_h)}(\R^d) \subset \Conv \big( \chi_{\Gamma(u_h)}(\mathcal{N}_h) \big)$. This completes the proof.

%\noindent
%We conclude that for $x \in  \Conv (N_h)$, $\exists y \in \Omega_h$ such that $p \in  \chi_{\Gamma(u_h)}(x) \cap \partial \Gamma(u_h)(y)$.

%Let now $x \in \R^d \setminus \Conv (N_h)$. If $p \in \chi_{\Gamma(u_h)}(x)$, %then by \eqref{induced-x} $p=D L,  L=\Gamma(u_h)|_T$ for some $T \in \mathcal{T}(\Gamma(u_h))$. Thus $p \in \partial \Gamma(u_h)(y)$ for $y$ a vertex of $T$. 
%by Lemma \ref{ext-subd}, there is $z \in \Conv (\Omega_h)$ such that $p \in \chi_{\Gamma(u_h)}(z)$ and the result follows from the case $x \in \Conv (N_h)$.

\Qed

\end{proof}

%\begin{remark} The discussion on piecewise linear convex functions and the associated convex decompositions in \cite{Yau2013}, seems to indicate that $ \partial \Gamma(u_h)(\Omega_h)$ is convex. \end{remark}

\begin{remark} \label{geo-cvx-envelope}
Let $x \in \Conv (\Omega_h)$ and $T$ an element of an induced triangulation $\mathcal{T}(\Gamma(u_h))$ such that $x \in T$ and $\Gamma(u_h)$ is linear on $T$. Then if $x_i, i=1,\ldots,d$ denote the vertices of $T$, we have $x=\sum_{i=1}^{d+1} \lambda_i x_i$, $ \lambda_i \geq 0$, $ \sum_{i=1}^{d+1} \lambda_i=1$. %Recall that $\Gamma(u_h)(x_i) = u_h(x_i)$ for all $i$. And 
Moreover $\Gamma(u_h)(x_i)=u_h(x_i)$ for all $i$. 
We have $\Gamma(u_h)(x)=\sum_{i=1}^{d+1} \lambda_i \Gamma(u_h)(x_i)$ since $\Gamma(u_h)$ is linear on $T$.

\end{remark}

\section{Convergence of mesh functions and their convex envelopes} \label{interplay}

In this section we study the connection between the convergence of mesh functions and the convergence of their convex envelopes.
\begin{definition} \label{def-cv-mesh}
Let $u_h \in \mathcal{U}_h$ for each $h >0$. We say that $u_h$ converges to a function $u$ uniformly on $\tir{\Omega}$ if and only if for each sequence $h_k \to 0$ and for all $\epsilon >0$, there exists $h_{-1} >0$ such that for all $h_k$, $0< h_k < h_{-1}$, we have
$$
\max_{x \in \mathcal{N}_{h_k}} |u_{h_k}(x) - u(x)| < \epsilon.
$$
\end{definition}

We note that if $u_{h_k}$ converges uniformly on $\tir{\Omega}$ to a bounded function $u$, then $\Gamma(u_{h_k})$ is locally uniformly Lipschitz for $h_k$ sufficiently small.  

Put $m=\min \{\, u(x), x \in \tir{\Omega} \, \}$ and $M=\max \{\, u(x), x \in \tir{\Omega} \, \}$. For $h_k$ sufficiently small, we have
$m-1 \leq u_{h_k}(x) \leq M+1$ for all $x \in \mathcal{N}_{h_k}$. As $m-1$ is convex we get $\Gamma(u_{h_k}) \geq m-1$. If $L$ is affine and $L(x) \leq M+1$ for all $x \in \mathcal{N}_{h_k}$, then $L(x) \leq M+1$ for all $x\in \R^d$. This gives
$\Gamma(u_{h_k}) \leq M+1$. Thus $\Gamma(u_{h_k})$ is uniformly bounded. Since $\Gamma(u_{h_k})$ is convex, $\Gamma(u_{h_k})$ is locally uniformly Lipschitz, c.f. for example \cite[Theorem C]{Roberts}.

\begin{lemma} \label{cvg-cvx}
Assume that $u_h$ converges to a convex function $u$ uniformly on $\tir{\Omega}$. %compact subsets of $\Omega$, 
Assume also that $u$ is bounded. % and that the family $\widetilde{\Gamma}(u_h)$ is locally equicontinuous uniformly in $h$. 
Then 
$\Gamma(u_h)$ converges to $u$
%$\Gamma(u)$ 
uniformly on compact subsets of $\Omega$. % compact subsets of $\Omega$.
\end{lemma}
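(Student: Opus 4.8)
The plan is to prove the two one-sided bounds
\[
\liminf_{h\to 0}\Gamma(u_h)(x)\ge u(x)\qquad\text{and}\qquad \limsup_{h\to 0}\Gamma(u_h)(x)\le u(x)
\]
at each interior point $x$, and then to upgrade these pointwise statements to uniform convergence on compact subsets by a compactness argument. Set $\epsilon_h=\max_{y\in\mathcal{N}_h}|u_h(y)-u(y)|$, so that $\epsilon_h\to 0$ by Definition \ref{def-cv-mesh}. As established in the discussion preceding the statement, the hypotheses (that $u$ is bounded and $u_h\to u$ uniformly) guarantee that the family $\Gamma(u_h)$ is uniformly bounded and locally uniformly Lipschitz for $h$ small; moreover $u$ is continuous on $\Omega$ by Lemma \ref{continuity-convex}.

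For the lower bound, fix $x_0\in\Omega$ and choose $p\in\partial u(x_0)$, which exists by Lemma \ref{continuity-convex}. Passing to the canonical extension $\tilde{u}$ of \eqref{extension} and using Lemma \ref{extension-convex} to reach the boundary nodes, the affine function $L(z)=u(x_0)+p\cdot(z-x_0)$ satisfies $L\le u$ on $\overline{\Omega}$, hence $L\le u\le u_h+\epsilon_h$ on $\mathcal{N}_h$. Thus $L-\epsilon_h$ is an affine minorant of $u_h$ on $\mathcal{N}_h$, and the definition \eqref{def-cvx} of the convex envelope yields $\Gamma(u_h)(x_0)\ge L(x_0)-\epsilon_h=u(x_0)-\epsilon_h$. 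Since $\epsilon_h$ is independent of $x_0$, this gives $\Gamma(u_h)\ge u-\epsilon_h$ throughout $\Omega$.

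The upper bound is the delicate step, because $\Gamma(u_h)(x)$ is in general governed by nodes far from $x$ (the induced simplices need not shrink as $h\to 0$), so a direct estimate writing $x$ as a convex combination of nearby nodes is unavailable. I would instead argue by compactness. By Arzel\`a--Ascoli, any subsequence of $\Gamma(u_h)$ has a further subsequence converging uniformly on compact subsets of $\Omega$ to a limit $v$, which is convex and hence continuous on $\Omega$. The lower bound already forces $v\ge u$. For the reverse inequality, fix $x\in\Omega$ and pick nodes $y_h\in\mathcal{N}_h$ with $y_h\to x$, which is possible once $x\in(\Conv (\mathcal{N}_h))^\circ$. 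Then \eqref{g-u} gives $\Gamma(u_h)(y_h)\le u_h(y_h)$, where $u_h(y_h)\to u(x)$ by the uniform convergence $u_h\to u$ together with continuity of $u$, while $\Gamma(u_h)(y_h)\to v(x)$ by the uniform convergence of the chosen subsequence together with continuity of $v$. Hence $v(x)\le u(x)$, and therefore $v=u$.

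Since every subsequence of $\Gamma(u_h)$ thus admits a further subsequence converging, in the metrizable topology of uniform convergence on compact subsets, to the single limit $u$, the full sequence converges uniformly on compact subsets of $\Omega$ to $u$, which is the claim. The only genuinely nontrivial point is the passage to $v\le u$: it is exactly here that the density of the node set $\mathcal{N}_h$ and the clean pointwise inequality \eqref{g-u} at mesh nodes supply what the global (possibly long-range) structure of the envelope would otherwise obstruct.
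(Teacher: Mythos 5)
Your proof is correct, but it follows a genuinely different route from the paper's. The paper proves both one-sided bounds quantitatively at the discrete level: the upper bound $\Gamma(u_{h_k})\le u+\epsilon/2$ at mesh nodes comes from \eqref{g-u} together with the discrete convex envelope \eqref{disc-cv} and Theorem \ref{discrete-cvx-envelope-thm}, while the lower bound $u<\Gamma(u_{h_k})+\epsilon/2$ on $\Conv(\Omega_h)$ comes from the contact-point structure of the induced triangulation (Remark \ref{geo-cvx-envelope}) and Jensen's inequality; it then upgrades this node-wise convergence to uniform convergence on compacts by a contradiction argument using the local equi-Lipschitz property. You replace both ingredients: your lower bound uses supporting hyperplanes of $u$ (nonempty subdifferential via Lemma \ref{continuity-convex}), giving the clean global estimate $\Gamma(u_h)\ge u-\epsilon_h$ on all of $\Omega$ without any reference to the triangulation structure; your upper bound is obtained softly, by Arzel\`a--Ascoli (as in Theorem \ref{mass-compact03}) plus identification of every subsequential limit $v$ through \eqref{g-u} at nodes $y_h\to x$, and the subsequence-uniqueness principle in the topology of local uniform convergence. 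What the paper's route buys is a two-sided quantitative estimate tied directly to the mesh-function error, reusing the structural results of its Section 3; what your route buys is economy of means — only \eqref{g-u}, the pre-established uniform boundedness and local equi-Lipschitz property, and elementary convex analysis — at the price of losing any rate on the upper-bound side.

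One point deserves care in your lower bound: Lemma \ref{extension-convex} gives $L\le\tilde u$ on $\R^d$ and $\tilde u=u$ on $\Omega$, but to conclude $L\le u_h+\epsilon_h$ at nodes of $\partial\Omega_h$ you need $\tilde u\le u$ on $\partial\Omega$, which is not automatic for a function merely convex on the open set $\Omega$ (boundary values could dip below the convex closure, and then the lemma's conclusion itself fails). It does hold when $u$ is convex on $\tir\Omega$, since for $y\in\partial\Omega$ and $z\in\Omega$, $q\in\partial u(z)$, comparing $u$ along the segment $[z,y]$ gives $u(y)\ge u(z)+q\cdot(y-z)$. This is the same reading of the hypothesis that the paper's own proof uses (its inequality \eqref{u-3} and Theorem \ref{discrete-cvx-envelope-thm} also require convexity up to the boundary), so it is not a gap relative to the paper, but it should be stated explicitly rather than attributed to Lemma \ref{extension-convex} alone.
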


\begin{proof} Let $\epsilon >0$ and $h_{-1} >0$ such that for all $h_k$, $0< h_k < h_{-1}$ and $x \in \mathcal{N}_{h_k}$
\begin{equation} \label{u-1}
u_{h_k}(x) - \frac{\epsilon}{2} < u(x) <  u_{h_k}(x) + \frac{\epsilon}{2}.
\end{equation}
From \eqref{g-u}, we get
$$
\Gamma(u_{h_k})(x) - \frac{\epsilon}{2} \leq u_{h_k}(x) - \frac{\epsilon}{2} < u(x), x \in \mathcal{N}_{h_k}.
$$
But $\widetilde{\Gamma}(u_{h_k}) - \epsilon/2$ is a convex function. Thus, by definition of the discrete convex envelope \eqref{disc-cv}, we obtain
\begin{equation} \label{u-2}
\Gamma(u_{h_k})(x)  - \frac{\epsilon}{2}  \leq \Gamma_h(u)(x), x \in \mathcal{N}_{h_k}.
\end{equation}

Recall from Remark \ref{geo-cvx-envelope} that for every $x \in \Conv(\Omega_h)$, we can find $x_i \in \mathcal{N}_{h_k}, i=1,\ldots, d+1$ such that $u_{h_k}(x_i) = \Gamma(u_{h_k})(x_i)$ for all $i$ and  $x=\sum_{i=1}^{d+1} \lambda_i x_i$, $ \lambda_i \geq 0$, $ \sum_{i=1}^{d+1} \lambda_i=1$. Moreover, $\Gamma(u_{h_k})$ is linear on the convex hull of $\{ x_i, i=1,\ldots, d+1 \, \}$. Thus
$$
\Gamma(u_{h_k})(x) = \sum_{i=1}^{d+1} \lambda_i \Gamma(u_{h_k})(x_i) = \sum_{i=1}^{d+1} \lambda_i u_{h_k}(x_i). 
$$
It follows from \eqref{u-1} and the convexity of $u$ that %for $x \in \mathcal{N}_{h_k}$
\begin{equation} \label{u-3}
u(x) \leq \sum_{i=1}^{d+1} \lambda_i  u(x_i) < \sum_{i=1}^{d+1} \lambda_i  u_{h_k}(x_i) + \frac{\epsilon}{2} = \Gamma(u_{h_k})(x) + \frac{\epsilon}{2}.
\end{equation}
By Theorem \ref{discrete-cvx-envelope-thm}, since $u$ is convex, for $x \in \mathcal{N}_{h_k}$, $u(x) = \Gamma(u)(x) =  \Gamma_h(u)(x)$. We conclude from \eqref{u-2} and \eqref{u-3} that
$$
\Gamma(u_{h_k})(x)  - \frac{\epsilon}{2}  \leq u(x) < \Gamma(u_{h_k})(x) + \frac{\epsilon}{2}.
$$
Thus $u-\Gamma(u_{h_k})$ converge uniformly to 0 on $\tir{\Omega}$, as mesh functions, i.e. in the sense of Definition \ref{def-cv-mesh}. We next prove that $u-\Gamma(u_{h_k})$ converges uniformly to 0 on compact subsets of $\Omega$ as functions on $\tir{\Omega}$. 

Let $K$ be a compact subset of $\Omega$. We may assume that $K \subset (\Conv(\mathcal{N}_{h_k}   ))^\circ$. By Lemma \ref{continuity-convex}, $u$ is continuous on $K$ and hence uniformly continuous on $K$. Given $\delta >0$, there exists $\alpha >0$ such that  
$$
|y-x| < \alpha \implies |u(y) - u(x)| < \frac{\delta}{4}, x, y \in K.
$$
Assume now that the convergence is not uniform on $K$. We may assume that the sequence $h_k$ is chosen such that for all $x \in \Conv(\mathcal{N}_{h_k})$, one can find $y \in \mathcal{N}_{h_k}$ such that $|y-x| < \alpha$. By assumption, there exists $\delta >0$ such that for all $k$, there exists $x_k \in K$ %\Conv(\mathcal{N}_{h_k})$ 
with $|u(x_k)-\Gamma(u_{h_k})(x_k)|> \delta$. Let $y_k \in \mathcal{N}_{h_k} \cap K$ such that $|y_k - x_k| < \alpha$. By the local equicontinuity of $\Gamma(u_{h_k})$, we may assume that 
$$
|\Gamma(u_{h_k}) (x_k) - \Gamma(u_{h_k})(y_k) | \leq  \frac{\delta}{4}.
$$ 

Put $a=u(y_k) -\Gamma(u_{h_k}) (y_k)$, $b=u(x_k) - u(y_k)$ and $c=\Gamma(u_{h_k})  (y_k) - \Gamma(u_{h_k}) (x_k)$. Since $|a| \geq |a+b+c| - |b| - |c|$, we get
\begin{multline*}
|u(y_k) -  \Gamma(u_{h_k}) (y_k)| \geq |u(x_k)  -  \Gamma(u_{h_k}) (x_k)| - |u(x_k) - u(y_k)| \\
 - |\Gamma(u_{h_k})  (y_k) - \Gamma(u_{h_k}) (x_k)| \geq \delta -  \frac{\delta}{4} -  \frac{\delta}{4} =  \frac{\delta}{2}. 
\end{multline*}
A contradiction. This completes the proof. \Qed
\end{proof}

\begin{theorem} \label{main0}
Assume that $u_h$ converges to a convex function $u$ uniformly on $\tir{\Omega}$. %compact subsets of $\Omega$, 
Assume also that $u$ is bounded. % and $\omega(1,u_h,\Omega) \leq C$ for a constant $C$ independent of $h$. 
Then
$\omega(R,u_h,.)$ weakly converges to $\omega(R,u,.)$.
\end{theorem}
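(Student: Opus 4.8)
The plan is to reduce the statement entirely to the two facts already proved about the convex envelope, and to run the argument through the sequence $\Gamma(u_h)$. First I would invoke Lemma \ref{cvg-cvx}: since $u_h \to u$ uniformly on $\tir{\Omega}$ in the sense of Definition \ref{def-cv-mesh} and $u$ is bounded and convex, the convex envelopes $\Gamma(u_h)$ converge to $u$ uniformly on compact subsets of $\Omega$. Each $\Gamma(u_h)$ is convex and $u$ is convex, so this is precisely the hypothesis of Theorem \ref{weak-cvg}. Applying that theorem to the sequence of convex functions $\Gamma(u_h)$ yields at once that $\omega(R,\Gamma(u_h),.)$ converges weakly to $\omega(R,u,.)$.

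The second ingredient is Theorem \ref{equi-measures}, which identifies the discrete Monge-Amp\`ere measure with that of the envelope, namely $\omega(R,u_h,E) = \omega(R,\Gamma(u_h),E)$ for every Borel set $E \subset (\Conv(\mathcal{N}_h))^\circ$. Because $\Omega_h \subset (\Conv(\mathcal{N}_h))^\circ$ (this is the identity $\mathcal{N}_h \cap \partial\Conv(\mathcal{N}_h) = \partial\Omega_h$ recorded in Section \ref{as}), and because $\omega(R,u_h,B)$ only registers the interior lattice nodes $B \cap \Omega_h$, the set function $\omega(R,u_h,.)$ coincides with $\omega(R,\Gamma(u_h),.)$ on every Borel set $B$ with $\tir{B} \subset \Omega$, once $h$ is small enough that $\tir{B} \subset (\Conv(\mathcal{N}_h))^\circ$. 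Combining this with the first step would give $\omega(R,u_h,B) \to \omega(R,u,B)$ for every such relatively compact $B$ with $\omega(R,u,\partial B)=0$.

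The delicate point, and the step I expect to be the main obstacle, is reconciling this with the definition of weak convergence, which tests against \emph{every} Borel $B \subset \Omega$ with $\omega(R,u,\partial B)=0$, including sets that approach $\partial\Omega$. On such $B$ the two set functions $\omega(R,u_h,.)$ and $\omega(R,\Gamma(u_h),.)$ need not agree: the envelope $\Gamma(u_h)$ carries subdifferential mass along the chords of $\partial\Conv(\mathcal{N}_h)$ cutting through $\Omega$ near the boundary, whereas $\omega(R,u_h,.)$ sums only over $\Omega_h$. My plan here is to observe that the discrepancy is supported in the boundary layer $\Omega \setminus (\Conv(\mathcal{N}_h))^\circ$, which shrinks to $\partial\Omega$ as $h \to 0$, and then to exhaust a general admissible $B$ from the inside by relatively compact continuity sets, controlling the boundary contribution in the limit. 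This last control is exactly where care is required, since the limit measure $\omega(R,u,.)$ may fail to have finite mass near $\partial\Omega$ when the gradients of $u$ blow up there.
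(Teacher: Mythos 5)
Your first two paragraphs are precisely the paper's proof: Lemma \ref{cvg-cvx} gives $\Gamma(u_h)\to u$ uniformly on compact subsets of $\Omega$, Theorem \ref{weak-cvg} applied to the convex functions $\Gamma(u_h)$ gives weak convergence of $\omega(R,\Gamma(u_h),.)$ to $\omega(R,u,.)$, and Theorem \ref{equi-measures} transfers this to $\omega(R,u_h,.)$. The only point the paper leaves implicit is the one you spell out: a fixed test set is eventually contained in $(\Conv(\mathcal{N}_h))^{\circ}$, so the identification of the two set functions applies for $h$ small.

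Your third paragraph, however, chases an obstacle that is not there, and the stronger statement you try to reach in it is actually false. In the paper's definition of weak convergence, a test set $B$ must satisfy $\omega(R,u,\partial B)=0$; since $\omega(R,u,.)$ is a measure on Borel subsets of $\Omega$, this presupposes $\partial B\subset\Omega$, and then $\tir{B}=B\cup\partial B$ is a closed bounded subset of $\Omega$, hence compact. So \emph{every} admissible test set is of the relatively compact type you already handled in your second paragraph, and sets whose closure meets $\partial\Omega$ are never tested. This restriction is essential, not cosmetic: take $d=1$, $\Omega=(0,1)$, $u\equiv 0$, $R\equiv 1$, and $u_h=0$ at every node of $\mathcal{N}_h$ except $u_h(1)=\sqrt{h}$. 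Then $u_h\to u$ uniformly on $\tir{\Omega}$ in the sense of Definition \ref{def-cv-mesh}, but $\partial_h u_h(1-h)=[0,h^{-1/2}]$, so $\omega(1,u_h,(1/2,1))\geq h^{-1/2}\to\infty$ while $\omega(1,u,(1/2,1))=0$, even though $\omega(1,u,\{1/2\})=0$. Thus the "boundary contribution" you hope to control by exhausting $B$ from inside is genuinely uncontrollable under the hypotheses of the theorem (the same blow-up occurs for $\omega(R,\Gamma(u_h),.)$ along the chords you describe), and Theorem \ref{weak-cvg} itself is only an interior statement for the same reason. Deleting the third paragraph leaves a complete and correct proof, identical to the paper's.
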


\begin{proof} %By Lemma \ref{mass-compact}, $\widetilde{\Gamma}(u_h)$ is locally equicontinuous.
 From Lemma \ref{cvg-cvx}, $\Gamma(u_h)$ converges to $u$ uniformly on compact subsets of $\Omega$. The result then follows from Theorems \ref{equi-measures} and \ref{weak-cvg}.
\Qed
\end{proof}

If $u_h$ only converges uniformly on compact subsets, we give below conditions under which $\omega(R,u_h,.)$ weakly converges. In some situations discussed in section \ref{compact} one deals with sequences for which both $u_{h}$ and $\Gamma(u_h)$ converge uniformly on compact subsets. We recall that for a family of sets $A_k$
$$
\limsup_k A_k  = \cap_{n} \cup_{k \geq n} A_k \text{ and }  \liminf_k A_k  = \cup_{n} \cap_{k \geq n} A_k.
$$

For Lemmas \ref{lem-weak01} and \ref{lem-weak02} below, we consider a compact $K \subset \Omega$ and an open set $U$ such that $K \subset U  \subset \tir{U} \subset \Omega$. Let $\Gamma_U(u_h)$ denote the convex envelope of $u_h$ on $\tir{U}$. If $u_h \to u$ on $ \tir{U}$,
by  Lemma \ref{cvg-cvx},  $\Gamma_U(u_h)$ converges uniformly to $u$ on $K$. We have  
\begin{equation} \label{gamma-U}
 \Gamma( u_{h_k}) \leq \Gamma_U( u_{h_k}) \text{ on } U \text{ and on } U \cap \mathcal{N}_{h_k}, \Gamma( u_{h_k}) \leq \Gamma_U( u_{h_k}) \leq u_{h_k}.
 \end{equation}

\begin{lemma} \label{lem-weak01}
Assume that $u_h \to u$ uniformly on compact subsets of $\Omega$, with $u$ convex and continuous. Then for $K \subset \Omega$ compact and any sequence $h_k \to 0$, up to a set of measure zero
$$
\limsup_{h_k \to 0} \partial \Gamma( u_{h_k})(K) \subset \partial u(K).
$$
\end{lemma}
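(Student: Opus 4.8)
The plan is to characterize membership in the limsup, reduce everything to the behaviour of a locally uniform limit of the envelopes, and then split into a contact case handled by a touching-plane argument and a non-contact case controlled by an overlap (null-set) lemma. First I would unwind the definition: a vector $p$ lies in $\limsup_{h_k\to 0}\partial\Gamma(u_{h_k})(K)$ precisely when, after passing to a subsequence (not relabelled), there are points $x_k\in K$ with $p\in\partial\Gamma(u_{h_k})(x_k)$. Since $K$ is compact I extract $x_k\to x^*\in K$. Using \eqref{gamma-U}, the envelopes are bounded above on $K$ by $\Gamma_U(u_{h_k})\to u$, so, after a further subsequence and a compactness argument for convex functions, I may assume $\Gamma(u_{h_k})\to v$ locally uniformly on $\Omega$ with $v$ convex and $v\le u$. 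Passing to the limit in the subgradient inequality for $\Gamma(u_{h_k})$ at $x_k$ then yields $p\in\partial v(x^*)$ together with $v(x^*)=\lim_k\Gamma(u_{h_k})(x_k)$.

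The argument splits on whether $x^*$ is a contact point of $v$. If $v(x^*)=u(x^*)$, the supporting plane $L(z)=v(x^*)+p\cdot(z-x^*)$ satisfies $L\le v\le u$ with $L(x^*)=u(x^*)$, so $L$ touches $u$ at $x^*$ and $p\in\partial u(x^*)\subset\partial u(K)$, as desired. The real work is the non-contact case $v(x^*)<u(x^*)$, where I claim the corresponding $p$ fill only a null set. Here I would use the induced-triangulation description of the envelope from Remark \ref{geo-cvx-envelope}: $x_k$ lies in a simplex $T_k$ on which $\Gamma(u_{h_k})$ is affine and whose vertices are contact nodes. If $\diam(T_k)\to 0$ along a subsequence, the vertices converge to $x^*$, are contact nodes, and the convexity of $u$ together with the uniform convergence on compact subsets forces $v(x^*)=u(x^*)$, contradicting the non-contact assumption. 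Hence $\diam(T_k)$ stays bounded below, the simplices converge to a nondegenerate face $F^*$, and testing midpoints of points of $F^*\cap\Omega$ against the affineness of $\Gamma(u_{h_k})$ on $T_k$ shows that $v$ is affine on $F^*\cap\Omega$, a set containing $x^*$ in its relative interior. Therefore $p$ is a subgradient of $v$ at $x^*$ and at a second, distinct point of $F^*\cap\Omega$, so $p$ belongs to the set of vectors that are subgradients of (the extension of) $v$ at two distinct points, which has measure zero by \cite[1.1.12]{Guti'errez2001}. Collecting the two cases gives the stated inclusion up to this null set.

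I expect the main obstacle to be precisely this non-contact case, and within it two points need care. The geometric lever, namely ruling out shrinking simplices and extracting a genuine affine piece of the limit $v$ through $x^*$, is the heart of the matter, and it must be carried out so that the second subgradient point is honestly distinct from $x^*$ even when $x^*$ sits on the relative boundary of the face, or when some vertices of $F^*$ escape to $\partial\Omega$, where the uniform convergence of $u_{h_k}$ is unavailable. The second, more bookkeeping, difficulty is that the limit $v$, and hence the exceptional null set produced by \cite[1.1.12]{Guti'errez2001}, depends on the extracted subsequence; to obtain a single null set valid for the whole limsup I would first fix one locally uniformly convergent subsequence of $\{\Gamma(u_{h_k})\}$ and argue that every escaping, non-contact slope is captured by the overlap set of its limit, so that the measure-zero exceptional set can be taken independent of $p$.
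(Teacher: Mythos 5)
Your proof has a genuine gap at its foundation: the compactness step. Under the hypotheses of the lemma, $u_h$ is controlled only on compact subsets of $\Omega$; its values at nodes near $\partial \Omega$, and in particular on $\partial \Omega_h$, are completely unconstrained, and these values enter the definition \eqref{def-cvx} of the global envelope. Consequently $\Gamma(u_{h_k})$ is bounded above on $K$ by \eqref{gamma-U}, but it need not be bounded below nor locally equi-Lipschitz, and no subsequence need converge to a finite convex function $v$. Concretely, take $\Omega=(0,1)$, $u \equiv 0$, $u_h = 0$ at all interior nodes and $u_h = -1/h$ at the two boundary nodes $0,1$: then $u_h \to u$ uniformly on compact subsets of $\Omega$, yet $\Gamma(u_h) \equiv -1/h$ on $[0,1]$, which diverges to $-\infty$ everywhere. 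Your entire case analysis (contact $v(x^*)=u(x^*)$ versus non-contact $v(x^*)<u(x^*)$) is predicated on the existence of the limit $v$ and therefore collapses; the conclusion of the lemma still holds in this example (the only slope is $0$), but your argument does not establish it. This is why the paper never takes a limit of the global envelopes: it introduces the \emph{local} envelope $\Gamma_U(u_{h_k})$ over a fixed $U$ with $K \subset U \subset \tir{U} \subset \Omega$, which by \eqref{gamma-U} and Lemma \ref{cvg-cvx} does converge to $u$, so that \cite[Lemma 1.2.2]{Guti'errez2001} applies to it.

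The second difficulty you flag is not mere bookkeeping, and your proposed fix does not close it. The exceptional null set produced by \cite[Lemma 1.1.12]{Guti'errez2001} in your argument is attached to a particular subsequential limit $v$; distinct slopes $p$ in the limsup may recur only along distinct subsequences with distinct limits, of which there can be uncountably many, and an uncountable union of null sets need not be null. Fixing one convergent subsequence in advance cannot capture slopes that appear only along others. The repair is to produce the two-point subgradient coincidence at the \emph{discrete} level, for each fixed $k$ — which is exactly the paper's route: for $x \in K$ and $p \in \partial\Gamma(u_{h_k})(x)$, either $\Gamma(u_{h_k})(x)=\Gamma_U(u_{h_k})(x)$, in which case $p \in \partial\Gamma_U(u_{h_k})(K)$; or, by Theorem \ref{geo-proof}, there is a node $y$ with $\Gamma(u_{h_k})(y)=u_{h_k}(y)$ and $p \in \chi_{\Gamma(u_{h_k})}(x)\cap\chi_{\Gamma(u_{h_k})}(y)$, and then either $y \in K$ (so again $p\in\partial\Gamma_U(u_{h_k})(K)$) or $y \neq x$ and $p$ lies in a null set $N_k$ depending only on $k$. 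The union $\cup_k N_k$ is a single null set, and the limsup inclusion follows from the result for $\Gamma_U$. Your instinct to use the induced triangulation is sound, but it is already packaged in Theorem \ref{geo-proof} and must be exploited per $k$, not through a limit face $F^*$ of simplices — an argument that, as you yourself note, also runs into trouble when $x^*$ sits on the relative boundary of the face or vertices escape to $\partial\Omega$.
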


\begin{proof}

Let $U$ be open such that $K \subset U  \subset \tir{U} \subset \Omega$.  By \cite[Lemma 1.2.2]{Guti'errez2001}
$$
\limsup_{h_k \to 0} \partial \Gamma_U( u_{h_k})(K) \subset \partial u(K).
$$
 Let $x \in K$ and $p \in \partial \Gamma( u_{h_k})(x)$. 
If $\Gamma( u_{h_k})(x) = \Gamma_U( u_{h_k})(x)$, using \eqref{gamma-U} we obtain
$p \in  \partial \Gamma_U( u_{h_k})(x) \subset \partial \Gamma_U( u_{h_k})(K)$. 

Assume that $\Gamma( u_{h_k})(x) \neq \Gamma_U( u_{h_k})(x)$. 
By Theorem \ref{geo-proof}, there exists $y \in \mathcal{N}_h$ such that $p \in 
\partial \Gamma( u_{h_k})(x) \cap \partial \chi_{\Gamma( u_{h_k})}(y)$ and $\Gamma(u_{h_k})(y) = u_{h_k}(y)$. If $y \in K$, this implies by \eqref{gamma-U} that $\Gamma( u_{h_k})(y) = \Gamma_U( u_{h_k})(y)$ and hence $p \in  \partial \Gamma_U( u_{h_k})(K)$.  If $y \notin K$, 
$x \neq y$ and $p$ is contained in a set of measure 0. Thus, up to a set of measure zero, 
$\partial \Gamma ( u_{h_k})(K)\subset \partial \Gamma_U( u_{h_k})(K)$. The result then follows.

\Qed
\end{proof}

Recall from Lemma \ref{extension-convex} that the extension $\tilde{u}$ of $u$ given by \eqref{extension} satisfies $\tilde{u}=u$ on $\Omega$ and  that for all $x \in \Omega$, $\chi_u(x) = \partial u(x)$.

\begin{lemma} \label{lem-weak02}
Assume that $u_h \to u$ uniformly on compact subsets of $\Omega$, with $u$ convex and continuous. Assume that $K$ is compact and $U$ is open with $K \subset U  \subset \tir{U} \subset \Omega$ and that for any sequence $h_k \to 0$, a subsequence $k_j$ 
and $z_{k_j} \in \mathcal{N}_{h_{k_j}}$ with $z_{k_j} \to z_0 \in \partial \Omega$, we have
\begin{equation} \label{bc-condition}
\liminf_{j \to \infty} \tilde{u}(z_{k_j}) \leq \limsup_{j \to \infty} u_{h_{k_j}} (z_{k_j}).
\end{equation}
Then, up to a set of measure zero, for any sequence $h_k \to 0$
$$
 \partial u(K) \subset \liminf_{h_k \to 0}\partial \Gamma(u_{h_k})(U).
$$

\end{lemma}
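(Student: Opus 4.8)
The plan is to establish the reverse inclusion to Lemma \ref{lem-weak01} by the classical ``sliding supporting plane'' argument, adapted to the convex envelope, with the behaviour of the contact points near $\partial\Omega$ controlled by the hypothesis \eqref{bc-condition}. Fix an arbitrary sequence $h_k\to 0$. By Lemma \cite[1.1.12]{Guti'errez2001} (applied to $\tilde u$ on a bounded neighborhood of $\tir{\Omega}$) the set $Z$ of $p$ which lie in $\chi_u(z)$ for two distinct points $z$ has Lebesgue measure zero, so it suffices to treat $p\in\partial u(K)\setminus Z$. For such a $p$ there is a unique point, necessarily some $x_0\in K$, with $p\in\chi_u(x_0)$. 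Set $\ell(x)=u(x_0)+p\cdot(x-x_0)$; since $p\in\chi_u(x_0)$ and, by Lemma \ref{extension-convex}, $\tilde u(x_0)=u(x_0)$, we have $\tilde u\geq\ell$ on $\R^d$ with equality exactly at $x_0$.

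First I would produce, for each $k$, a contact point carrying the slope $p$. Because $\mathcal{N}_{h_k}$ is finite, $\delta_k:=\min_{y\in\mathcal{N}_{h_k}}\big(u_{h_k}(y)-\ell(y)\big)$ is attained at some $y_k\in\mathcal{N}_{h_k}$. The affine function $\ell+\delta_k$ lies below $u_{h_k}$ on $\mathcal{N}_{h_k}$, hence below $\Gamma(u_{h_k})$ on $\Conv(\mathcal{N}_{h_k})$ by \eqref{def-cvx}; evaluating at $y_k$ and using \eqref{g-u} forces $\Gamma(u_{h_k})(y_k)=u_{h_k}(y_k)=\ell(y_k)+\delta_k$, so that $\Gamma(u_{h_k})(x)\geq\Gamma(u_{h_k})(y_k)+p\cdot(x-y_k)$ on $\Conv(\mathcal{N}_{h_k})$, i.e. $p\in\partial\Gamma(u_{h_k})(y_k)$. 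Choosing a mesh point $x_k\in\Omega_{h_k}$ with $x_k\to x_0$ and using the uniform convergence $u_{h_k}\to u$ on compact sets together with continuity of $u$ gives $\delta_k\leq u_{h_k}(x_k)-\ell(x_k)\to u(x_0)-\ell(x_0)=0$, whence $\limsup_k\delta_k\leq 0$.

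It then remains to show $y_k\in U$ for all large $k$, which yields $p\in\partial\Gamma(u_{h_k})(U)$ and hence $p\in\liminf_k\partial\Gamma(u_{h_k})(U)$ (recall for large $k$ that $\tir{U}\subset(\Conv(\mathcal{N}_{h_k}))^\circ$). I would argue by contradiction: if $y_{k_j}\notin U$ along a subsequence, extract $y_{k_j}\to y_*\in\tir{\Omega}\setminus U$, so $y_*\neq x_0$. If $y_*\in\Omega$, uniform convergence on a compact neighborhood gives $\delta_{k_j}=u_{h_{k_j}}(y_{k_j})-\ell(y_{k_j})\to u(y_*)-\ell(y_*)>0$, contradicting $\limsup_k\delta_k\leq 0$. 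If $y_*\in\partial\Omega$, I would apply \eqref{bc-condition} with $z_{k_j}=y_{k_j}\to z_0=y_*$: using continuity of $\tilde u$ (Lemma \ref{extension-convex}) and $u_{h_{k_j}}(z_{k_j})=\delta_{k_j}+\ell(z_{k_j})$, the inequality becomes $\tilde u(z_0)\leq\limsup_j\delta_{k_j}+\ell(z_0)\leq\ell(z_0)$, so $\tilde u(z_0)=\ell(z_0)$; this equality forces $\tilde u(w)\geq\tilde u(z_0)+p\cdot(w-z_0)$ for all $w$, i.e. $p\in\chi_u(z_0)$, and uniqueness gives $z_0=x_0\in K\subset U$, contradicting $z_0\in\partial\Omega$.

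The main obstacle is precisely this last step: keeping the contact points $y_k$ from drifting to the boundary. The interior case is routine, but ruling out a boundary accumulation point is exactly what the somewhat technical compatibility condition \eqref{bc-condition} and the canonical extension $\tilde u$ are engineered to do; making the $\limsup$/$\liminf$ bookkeeping align with the direction of \eqref{bc-condition} and with $\tilde u\geq\ell$ is the delicate point, while the measure-zero reduction via \cite[1.1.12]{Guti'errez2001} is what lets us assume the supporting plane $\ell$ touches $\tilde u$ at a single interior point.
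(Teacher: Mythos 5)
Your proposal is correct in substance but takes a genuinely different route from the paper's. The paper first passes to the local convex envelope $\Gamma_U(u_{h_k})$ of $u_{h_k}$ over $\tir{U}$, invokes Lemma \ref{cvg-cvx} and \cite[Lemma 1.2.2]{Guti'errez2001} to produce a contact point $x_k \in U$ with $p \in \partial\Gamma_U(u_{h_k})(x_k)$, and then shows that the supporting affine function at $x_k$, which a priori minorizes $u_{h_k}$ only on $\tir{U} \cap \mathcal{N}_{h_k}$, in fact minorizes $u_{h_k}$ on all of $\mathcal{N}_{h_k}$; violating mesh points accumulating in the interior are excluded by local uniform convergence, those accumulating on $\partial \Omega$ by \eqref{bc-condition} combined with lower semicontinuity of $\tilde{u}$ and the strict supporting property at the unique contact point $x_0$. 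You instead slide the plane $\ell$ globally: you minimize $u_{h_k}-\ell$ over all of $\mathcal{N}_{h_k}$, obtain a contact point $y_k$ with $p \in \partial\Gamma(u_{h_k})(y_k)$ essentially for free, and then must show $y_k \in U$ for large $k$ --- which you do by exactly the same interior/boundary dichotomy, with \eqref{bc-condition} playing the same role. Your route is more self-contained (no $\Gamma_U$, no Lemma \ref{cvg-cvx}, no \cite[Lemma 1.2.2]{Guti'errez2001}); the paper's route never needs a minimizer to exist, since its contradiction argument only requires one violating mesh point per bad index.

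Two caveats. First, your construction needs the minimum of $u_{h_k}-\ell$ over $\mathcal{N}_{h_k}$ to be attained. This is automatic if $\mathcal{N}_{h_k}$ is finite, an assumption the paper makes implicitly in several places (e.g.\ $\chi_{\Gamma(u_h)}(\mathcal{N}_h)$ is treated as a finite union of closed sets in Theorem \ref{geo-proof}); but with the choice $V=\mathbb{Z}^d$ advocated for the convergence study, the set $\partial \Omega_h$ defined by \eqref{boundary-domain} is in general infinite, and then your infimum need not be attained, nor even be finite, since \eqref{bc-condition} constrains only sequences with $h_{k_j}\to 0$ and says nothing about a single fixed mesh function on an infinite boundary set. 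The paper's proof is insensitive to this point, so your argument is strictly less robust there. Second, in the boundary case you invoke the continuity of $\tilde{u}$ via Lemma \ref{extension-convex}; that lemma yields continuity only when $u$ is bounded with $\partial u(\Omega)$ bounded, neither of which is a hypothesis of the present lemma. What your step actually requires is only $\tilde{u}(z_0) \leq \liminf_{j\to\infty} \tilde{u}(z_{k_j})$, i.e.\ lower semicontinuity of $\tilde{u}$, which holds for free because $\tilde{u}$ is a supremum of affine functions --- this is precisely what the paper uses. With these two repairs (or under the paper's implicit finiteness convention), your argument is complete.
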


\begin{proof} The proof is analogous to the proof of \cite[Lemma 3.3]{GutierrezNguyen07}.  By \cite[Lemma 1.1.3]{Guti'errez2001}, $\partial u(\tir{U})$ is bounded, and
 since $u$ is bounded on $\tir{U}$ and $\Omega$ bounded, $\tilde{u}$ is finite on $\tir{U}$. Define
$$
W = \{ \, p \in \R^d, p \in \partial \tilde{u}(x_1) \cap \partial \tilde{u}(x_2), \, \text{for some} \, x_1, x_2 \in \R^d, x_1 \neq x_2 \, \}.
$$
By \cite[Lemma 1.1.12]{Guti'errez2001}, $|W|=0$. Let $p \in \partial v(K) \setminus W$. By definition of $W$, 
there exists a unique $x_0 \in K$ such that $p \in \partial \tilde{u}(x_0)$  and for all $x \in \R^d, x \neq x_0$ we have $p \notin \partial \tilde{u}(x)$. We claim that
\begin{equation} \label{part-ineq0}
\tilde{u}(x) > \tilde{u}(x_0) + p \cdot (x-x_0), x \in \R^d, x \neq x_0,
\end{equation}
that is, $\tilde{u}(x_0) + p \cdot (x-x_0)$ is a strictly supporting hyperplane to the graph of $\tilde{u}$ at $x_0$. Otherwise, the plane would touch the graph at another point $x_1$ and would be a support at $x_1$ as well, contradicting the assumption that $p \notin W$. We refer to
\cite[p. 7]{Guti'errez2001} for an analytical proof. 

By \cite[Lemma 1.2.2]{Guti'errez2001}
$$
 \partial u(K) \setminus W \subset \liminf_{h_k \to 0}\partial \Gamma_U(u_{h_k})(U).
$$
This means that there exists $k_0$ such that for all $ k \geq k_0 $, one can find $x_k \in U$ and $p \in \partial \Gamma_U(u_{h_k})(x_k)$. Thus
\begin{equation} \label{part-ineq1}
 u_{h_k}(x) \geq \Gamma_U(u_{h_k})(x) \geq  \Gamma_U(u_{h_k})(x_k) + p \cdot (x-x_k), \forall x \in \tir{U} \cap \mathcal{N}_{h_k},
 \end{equation}
where we used the continuity of $\Gamma_U(u_{h_k})$ and \eqref{gamma-U}. We now claim that \eqref{part-ineq1} actually holds for all $x \in \mathcal{N}_{h_k}$ when $k \geq k_0$. Otherwise one can find a subsequence $k_j$ and $z_{k_j} \in (\tir{\Omega} \setminus \tir{U}) \cap \mathcal{N}_{h_{k_j}}$ such that
\begin{equation} \label{part-ineq2}
u_{h_{k_j}}(z_{k_j}) < \Gamma_U(u_{h_{k_j}})(x_{k_j}) + p \cdot (z_{k_j}-x_{k_j}).
\end{equation}
Since $\Omega$ is bounded, up to a subsequence, we may assume that $z_{k_j} \to z_0 \in \tir{\Omega} \setminus U$. We show that
\begin{equation} \label{part-ineq3}
\tilde{u}(z_0) \leq  \tilde{u}(x_0) + p \cdot (z_0-x_0).
\end{equation} 

{\it Case 1}: $z_0 \in \Omega \setminus U$. Let $Q$ be a compact subset such that $\tir{U} \subset Q$ and $z_{k_j} \in Q$ for $j$ sufficiently large. 
Using the uniform convergence of $u_h$ to $u$ on $Q$, the uniform continuity of $u$ on $Q$, the uniform convergence of $\Gamma_Q(u_h)$ to $u$ on $Q$, and taking limits in \eqref{part-ineq2}, we obtain 
$u(z_0) \leq u(x_0) + p \cdot (z_0-x_0) $. Since both $x_0$ and $z_0$ are in $\Omega$, this gives \eqref{part-ineq3}.

{\it Case 2}: $z_0 \in \partial \Omega \setminus U$. Now we have 
$$
\limsup_{j \to \infty} u_{h_{k_j}}(z_{k_j}) \leq \tilde{u}(x_0) + p \cdot (z_0-x_0),
$$
where we used the uniform convergence of $\Gamma_U(u_h)$ to $u$ on $\tir{U}$.
Note that $\tilde{u}$ is lower semi-continuous as the supremum of affine functions. Thus, using the assumption \eqref{bc-condition}, we obtain
\begin{align*}
\limsup_{j \to \infty} u_{h_{k_j}}(z_{k_j}) \geq  \liminf_{j \to \infty} \tilde{u}_{}(z_{k_j}) \geq \tilde{u}(z_0).
\end{align*}
Hence \eqref{part-ineq3} also holds in this case.

Finally we  note that \eqref{part-ineq3} contradicts \eqref{part-ineq0} and therefore \eqref{part-ineq2} cannot hold, i.e. \eqref{part-ineq1} actually holds for all $x \in \mathcal{N}_{h_k}$ when $k \geq k_0$. But this implies that $\Gamma(u_{h_k})(x) \geq \Gamma_U(u_{h_k})(x_k) + p \cdot (x-x_k), \forall x \in \Conv (\mathcal{N}_{h_k})$ and $k \geq k_0$. In particular, for $x=x_k$, we have $\Gamma(u_{h_k})(x_k) \geq \Gamma_U(u_{h_k})(x_k) \geq \Gamma(u_{h_k})(x_k)$, where we used \eqref{gamma-U}. We conclude that $\Gamma(u_{h_k})(x) \geq \Gamma(u_{h_k})(x_k) + p \cdot (x-x_k), \forall x \in \Conv (\mathcal{N}_{h_k})$ and $k \geq k_0$, i.e. $p \in \cup_n \cap_{k \geq n} \partial \Gamma(u_{h_k})(U)$. This concludes the proof.

\Qed
\end{proof}

%\begin{rem} In the above proof, would it have been enough to define $v$ on $\Omega$. Because $v$ can take infinite values.  \end{rem}

\begin{corollary} \label{bc-cor}
Assume that $u_h \to u$ uniformly on $\tir{\Omega}$, with $u$ convex and continuous on $\tir{\Omega}$. Assume that $K$ is compact and $U$ is open with $K \subset U  \subset \tir{U} \subset \Omega$. Then, up to a set of measure zero, for any sequence $h_k \to 0$
$$
 \partial u(K) \subset \liminf_{h_k \to 0} \partial \Gamma(u_{h_k})(U).
$$
\end{corollary}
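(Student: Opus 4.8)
The plan is to deduce this directly from Lemma~\ref{lem-weak02}. All of the structural hypotheses of that lemma are already assumed here, so the only thing to check is the boundary condition \eqref{bc-condition}; once it is verified, the conclusion is immediate.

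First I would record what the hypotheses give. Uniform convergence on $\tir{\Omega}$ in the sense of Definition~\ref{def-cv-mesh} restricts to uniform convergence on compact subsets of $\Omega$, so the convergence hypothesis of Lemma~\ref{lem-weak02} holds; the assumptions that $u$ is convex and continuous and that $K \subset U \subset \tir{U} \subset \Omega$ are assumed verbatim. Since $u \in C(\tir{\Omega})$ is convex, Lemma~\ref{extension-convex} gives that its canonical extension satisfies $\tilde{u} = u$ on $\tir{\Omega}$.

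The only real content is verifying \eqref{bc-condition}. Given any sequence $h_k \to 0$, a subsequence $k_j$, and nodes $z_{k_j} \in \mathcal{N}_{h_{k_j}}$ with $z_{k_j} \to z_0 \in \partial \Omega$, I would argue that both sides of \eqref{bc-condition} equal $u(z_0)$. Since $z_{k_j} \in \mathcal{N}_{h_{k_j}} \subset \tir{\Omega}$, the identity $\tilde{u} = u$ on $\tir{\Omega}$ together with the continuity of $u$ yields $\tilde{u}(z_{k_j}) = u(z_{k_j}) \to u(z_0)$. Because the $z_{k_j}$ are genuine nodes, Definition~\ref{def-cv-mesh} gives $|u_{h_{k_j}}(z_{k_j}) - u(z_{k_j})| \to 0$, so $u_{h_{k_j}}(z_{k_j}) \to u(z_0)$ as well. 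Hence $\liminf_j \tilde{u}(z_{k_j}) = u(z_0) = \limsup_j u_{h_{k_j}}(z_{k_j})$ and \eqref{bc-condition} holds, in fact with equality.

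With \eqref{bc-condition} in hand, Lemma~\ref{lem-weak02} applies and gives, up to a set of measure zero, $\partial u(K) \subset \liminf_{h_k \to 0}\partial \Gamma(u_{h_k})(U)$, which is the assertion. The one point I would be careful about is that uniform convergence in Definition~\ref{def-cv-mesh} is phrased nodewise, via $\max_{x \in \mathcal{N}_{h_k}}$, so in the boundary estimate it is essential that each $z_{k_j}$ be a node of $\mathcal{N}_{h_{k_j}}$; this is precisely what makes $|u_{h_{k_j}}(z_{k_j}) - u(z_{k_j})|$ small and hence lets the boundary hypothesis of the previous lemma be met automatically. No other obstacle arises, since the corollary is strictly a specialization of Lemma~\ref{lem-weak02} to the case of uniform convergence up to the boundary.
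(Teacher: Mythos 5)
Your proposal is correct and follows essentially the same route as the paper: both reduce the corollary to Lemma~\ref{lem-weak02} and then verify \eqref{bc-condition} using the nodewise uniform convergence (so that $u_{h_{k_j}}(z_{k_j}) - u(z_{k_j}) \to 0$) together with the identity $\tilde{u} = u$ on $\tir{\Omega}$ from Lemma~\ref{extension-convex}. The only cosmetic difference is that the paper invokes the inequality $\limsup_j(a_j+b_j) \geq \limsup_j a_j + \liminf_j b_j$ without identifying the limits, whereas you use continuity of $u$ at $z_0$ to show both sides of \eqref{bc-condition} equal $u(z_0)$; both verifications are valid under the stated hypotheses.
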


\begin{proof}
By Lemma  \ref{lem-weak02}, it is enough to show that \eqref{bc-condition} holds. The proof follows from \cite[Remark 3.2]{GutierrezNguyen07}. Put $a_j=u_{h_{k_j}} (z_{k_j}) - u_{} (z_{k_j})$ and $b_j=u_{} (z_{k_j})$. Recall that
$\limsup_{j} (a_j+b_j) \geq \limsup_j a_j + \liminf_j b_j$. This gives
$$
\limsup_{j \to \infty} u_{h_{k_j}} (z_{k_j}) \geq \limsup_{j \to \infty} (u_{h_{k_j}} - u) (z_{k_j}) +
\liminf_{j \to \infty} u_{} (z_{k_j}) = \liminf_{j \to \infty} u_{} (z_{k_j}).
$$
Since $u \in C(\tir{\Omega})$, we have $\tilde{u}=u$ on $\tir{\Omega}$. This completes the proof.

\Qed
\end{proof}

It follows immediately from Corollary \ref{bc-cor}, Lemmas \ref{lem-weak01} and \ref{lem-weak02},  an equivalence criteria of weak convergence of measures, c.f. for example  \cite[Theorem 1, section 1.9]{Evans-Gariepy}, and Theorem \ref{equi-measures}, 
that if $u_h \to u$ uniformly on $\tir{\Omega}$, with $u$ convex and continuous on $\tir{\Omega}$, %Assume furthermore that $u_h(x)  = r_h(\tilde{g})(x),  x \in \partial \Omega_h$ and $u(x)=g(x), x \in \Omega$. 
then $\omega(R,u_h,.) = \omega(R,\Gamma(u_h),.)$ tend to $\omega(R,u,.)$ weakly.

\begin{theorem} \label{mass-thm}
Assume that $u_h \to u$ uniformly on compact subsets of $\Omega$, with $u$ convex and continuous. We also assume that
$\omega(1,u_h,\Omega_h) \leq C$ for a constant $C$ independent of $h$, and $u_h=g$ on $\partial \Omega_h$ where $g  \in C(\tir{\Omega})$ is convex. Then \eqref{bc-condition} holds and as a consequence $\omega(R,u_h,.)$ tend to $\omega(R,u,.)$ weakly.
\end{theorem}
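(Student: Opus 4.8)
The plan is to prove condition \eqref{bc-condition} directly; once it is available, the weak convergence follows verbatim from the paragraph preceding the theorem. Indeed, granting \eqref{bc-condition}, Lemma \ref{lem-weak02} gives $\partial u(K)\subset\liminf_{h_k\to0}\partial\Gamma(u_{h_k})(U)$ up to a null set, Lemma \ref{lem-weak01} gives $\limsup_{h_k\to0}\partial\Gamma(u_{h_k})(K)\subset\partial u(K)$, and these two inclusions together with Theorem \ref{equi-measures} and the equivalence criterion for weak convergence of measures (\cite[Theorem 1, section 1.9]{Evans-Gariepy}) yield $\omega(R,u_h,.)=\omega(R,\Gamma(u_h),.)\to\omega(R,u,.)$ weakly. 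So I concentrate on \eqref{bc-condition}: fixing $h_k\to0$, a subsequence $k_j$, and $z_{k_j}\in\mathcal N_{h_{k_j}}$ with $z_{k_j}\to z_0\in\partial\Omega$, I will sandwich both sides of \eqref{bc-condition} by $g(z_0)$, showing $\limsup_j u_{h_{k_j}}(z_{k_j})\ge g(z_0)$ and $\liminf_j\tilde u(z_{k_j})\le g(z_0)$.

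For the lower bound on the right-hand side, which is the only place the hypotheses $\omega(1,u_h,\Omega_h)\le C$ and the convexity of $g$ are used, I first extend $g$ by $+\infty$ outside $\tir\Omega$; as $g\in C(\tir\Omega)$ is convex, this extension is proper, convex and closed, hence equal to the supremum of its affine minorants. Thus for each $\e>0$ there is an affine $L$ with $L\le g$ on $\tir\Omega$ and $L(z_0)>g(z_0)-\e$. Since $\partial\Omega_h\subset\partial\Omega$ and $u_h=g$ there, $u_h-L\ge0$ on $\partial\Omega_h$; and because subtracting the affine $L$ merely translates the discrete normal mapping by $DL$, one has $\omega(1,u_h-L,\Omega_h)=\omega(1,u_h,\Omega_h)\le C$. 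Applying the discrete Aleksandrov-Bakelman-Pucci estimate (Lemma \ref{d-stab}) to $u_h-L$ gives, for every $x\in\Omega_h$,
\[
u_h(x)\ge L(x)-C(d)\big[\diam(\Omega)^{d-1}\,d(x,\partial\Omega)\,C\big]^{1/d}.
\]
Taking $x=z_{k_j}$ at interior nodes (and using $u_{h_{k_j}}(z_{k_j})=g(z_{k_j})\ge L(z_{k_j})$ at boundary nodes) and letting $j\to\infty$, the distance term vanishes while $L(z_{k_j})\to L(z_0)$, so $\liminf_j u_{h_{k_j}}(z_{k_j})\ge L(z_0)>g(z_0)-\e$; letting $\e\to0$ gives $\limsup_j u_{h_{k_j}}(z_{k_j})\ge g(z_0)$.

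For the upper bound on the left-hand side I would use that $u_h=g$ at the boundary nodes, with $g$ convex, to pin the boundary trace of the limit. It suffices to show $\tilde u(z_0)\le g(z_0)$ and that $\tilde u$ restricted to $\tir\Omega$ does not exceed this value in the limit along $z_{k_j}$. Choosing a boundary point $z_0'\ne z_0$ with the open segment $(z_0',z_0)\subset\Omega$ and writing an interior point on it as $x=(1-t)z_0'+t z_0$, convexity of $u$ along this chord bounds $u(x)$ by the corresponding convex combination of the one-dimensional traces of $u$ at $z_0'$ and $z_0$. The trace at $z_0$ is itself controlled by the data: interior nodes near $z_0$ on such a chord satisfy, by the discrete convexity (chord) inequality through the two boundary nodes where $u_h=g$, an upper bound tending to $g(z_0)$ as the node approaches $z_0$, using the continuity of $g$. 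Passing to the limit in $h$ and then $x\to z_0$ gives $\tilde u(z_0)\le g(z_0)$ and hence $\liminf_j\tilde u(z_{k_j})\le g(z_0)$, which combined with the previous step is exactly \eqref{bc-condition}.

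The hard part is the lower bound of the second paragraph: it is where the bounded Monge-Amp\`ere mass is exploited through Lemma \ref{d-stab}, and the crucial device is the selection, for the boundary point $z_0$, of an affine minorant of the convex datum $g$ whose value at $z_0$ is within $\e$ of $g(z_0)$; convexity of $g$ is precisely what guarantees such a near-supporting plane exists while keeping $u_h-L\ge0$ on all of $\partial\Omega_h$, so that the ABP estimate applies with the uniform mass bound $C$. The upper bound is comparatively routine, the only delicate points being the justification of the chord inequality pinning the trace of $u$ by $g$ and the behaviour of the extension $\tilde u$ up to $\partial\Omega$ (which hinges on $\partial u(\Omega)$, equivalently the gradients of $u$, being suitably controlled near the boundary).
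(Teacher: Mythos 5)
Your overall architecture is the paper's: reduce the theorem to \eqref{bc-condition}, then get weak convergence from Lemmas \ref{lem-weak01} and \ref{lem-weak02}, Theorem \ref{equi-measures} and \cite[Theorem 1, section 1.9]{Evans-Gariepy}. Your lower bound $\limsup_{j} u_{h_{k_j}}(z_{k_j}) \ge g(z_0)$ is also essentially the paper's argument: the paper produces an affine $L \le g$ with $L(z_0) \ge g(z_0)-\epsilon$ from Theorem \ref{L-bd} (Hartenstine) rather than from your Fenchel--Moreau extension-by-$+\infty$ device, and then applies the discrete ABP estimate (Lemma \ref{d-stab}) to $u_h - L$ together with the uniform mass bound, exactly as you do. That half of your proof is sound.

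The other half, $\liminf_j \tilde{u}(z_{k_j}) \le g(z_0)$, is where you have genuine gaps. First, your chord argument invokes a ``discrete convexity (chord) inequality'' for $u_h$, but discrete convexity is not among the stated hypotheses, and without some such hypothesis the bound is simply false: take $\Omega=(-1,1)$, $g \equiv 0$, $u_h \equiv 1$ on $\Omega_h$ and $u_h = 0$ on $\partial\Omega_h$; then every $\partial_h u_h(x)$ is empty, so $\omega(1,u_h,\Omega_h)=0$, $u_h \to u \equiv 1$ uniformly on compact subsets, yet $\tilde{u} \equiv 1$ and \eqref{bc-condition} fails along boundary nodes. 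The paper proves this half only by importing Theorem \ref{main2}, whose hypotheses do include discrete convexity, and there the mechanism is entirely different from yours: $\Delta_h u_h \ge 0$, comparison with the discrete harmonic function $w_h$ via the discrete maximum principle, and convergence of $w_h$ to the viscosity solution of the Laplace equation on a Lipschitz domain (the barrier analysis of \cite{del2018convergence} reviewed in the appendix), which yields $u \in C(\tir{\Omega})$ and $u=g$ on $\partial\Omega$. Second, even granting discrete convexity, your concluding inference is a non sequitur: from $\tilde{u}(z_0) \le g(z_0)$ you cannot deduce $\liminf_j \tilde{u}(z_{k_j}) \le g(z_0)$, because $\tilde{u}$ is only lower semicontinuous, and lower semicontinuity gives $\liminf_j \tilde{u}(z_{k_j}) \ge \tilde{u}(z_0)$ --- the useless direction. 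What is actually needed is $\limsup_{\Omega \ni x \to z} u(x) \le g(z)$ for all boundary points $z$ near $z_0$, i.e. precisely the boundary continuity that Theorem \ref{main2} delivers (after which $\tilde{u}=u$ on $\tir{\Omega}$ and $\liminf_j \tilde{u}(z_{k_j}) = g(z_0)$); a radial bound along a single chord into $z_0$ is strictly weaker, and upgrading it requires radial bounds at all nearby boundary points plus a further convexity argument. Third, and less seriously, the chord inequality itself is only sketched: a chord $(z_0',z_0)$ is in general not a mesh line, so one must approximate by mesh directions with $V=\mathbb{Z}^d$ and control the exit points, as in Lemmas \ref{univariate} and \ref{unif-disc-conv}.
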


\begin{proof} We prove in Theorem \ref{main2} below, that under the assumptions of the theorem, $u \in C(\tir{\Omega})$ and $u=g$ on $\partial \Omega$. (Note that the discrete convexity assumption of $u_h$ and the uniform boundedness of $u_h$ in Theorem \ref{main2} only allows to have a subsequence which converges uniformly on compact subsets of $\Omega$ to a continuous convex function).

Thus, for $z_{k_j} \in \mathcal{N}_{h_{k_j}}$ with $z_{k_j} \to z_0 \in \partial \Omega$, we have $\liminf_{j \to \infty} u(z_{k_j}) = g(z)$. 
As in the proof of Theorem \ref{main2} and using the continuity of $g$ and $u_h=g$ on $\partial \Omega_h$
\begin{align*}
%g(z) & \leq \limsup \{ \, u_{h_{k_j}} (z_{k_j}),  z_{k_j} \in \Omega_{h_{k_j}}\, \} \\
%g(z) & \leq \limsup \{ \, u_{h_{k_j}} (z_{k_j}),  z_{k_j} \in \partial \Omega_{h_{k_j}} \, \}. \\
g(z) &\leq 
\begin{cases}  \limsup \{ \, u_{h_{k_j}} (z_{k_j}),  z_{k_j} \in \Omega_{h_{k_j}}\, \} \\
  \limsup \{ \, u_{h_{k_j}} (z_{k_j}),  z_{k_j} \in \partial \Omega_{h_{k_j}} \, \}. 
\end{cases}
\end{align*}
%$$= \limsup_{j \to \infty} u_{h_{k_j}} (z_{k_j}).$$
We conclude that $\liminf_{j \to \infty} u(z_{k_j}) = g(z) \leq \limsup_{j \to \infty} u_{h_{k_j}} (z_{k_j})$, i.e. \eqref{bc-condition} holds, 
as $u \in C(\tir{\Omega})$ and thus $\tilde{u}=u$ on $\tir{\Omega}$.

The weak convergence result then follows from Lemmas \ref{lem-weak01} and \ref{lem-weak02},  and an equivalence criteria of weak convergence of measures, c.f. for example  \cite[Theorem 1, section 1.9]{Evans-Gariepy}.

\Qed
\end{proof}

\section{Compactness of mesh functions with Monge-Amp\`ere masses uniformly bounded} \label{compact}

We use $C$ for a constant which may change at occurrences. In this section, we are interested in conditions under which a family of mesh functions has a subsequence which converges uniformly on $\tir{\Omega}$ or uniformly on compact subsets of $\Omega$. In both cases, we will also have uniform convergence on compact subsets of the convex envelopes.

We note that if $|u_h| \leq M$ for a constant $M$ independent of $h$, $|\Gamma(u_h)|\leq M$ on $\Conv(\mathcal{N}_h)$. See section \ref{interplay} for a similar argument. 

\begin{theorem} \label{mass-compact03}
Assume that $|u_h| \leq M$ for a constant $M$ independent of $h$. Then, there is a subsequence $h_k$ such that
 $\Gamma(u_{h_k})$ converges uniformly on compact subsets to a convex function $v$ on $\Omega$.

\end{theorem}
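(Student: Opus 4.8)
The plan is to treat this as an Arzelà--Ascoli compactness argument for the family of convex envelopes, exploiting the fact that uniformly bounded convex functions are locally uniformly Lipschitz. First I would record the uniform bound already observed just before the statement: since $|u_h| \leq M$, we have $|\Gamma(u_h)| \leq M$ on $\Conv(\mathcal{N}_h)$. Next, fix a compact set $K \subset \Omega$. Because the nodes $\Omega_h = \Omega \cap \mathbb{Z}_h^d$ become dense in $\Omega$ as $h \to 0$, the convex hulls $\Conv(\mathcal{N}_h)$ fill out $\Omega$; concretely, choosing $\delta>0$ with $K_\delta = \{\, x : d(x,K) \leq \delta \,\} \subset \Omega$, there is $h_0>0$ such that $K_\delta \subset (\Conv(\mathcal{N}_h))^\circ$ for all $0 < h < h_0$.

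On $K_\delta$ each $\Gamma(u_h)$ is convex and bounded in absolute value by $M$, so the standard local Lipschitz estimate for uniformly bounded convex functions (c.f.\ \cite[Theorem C]{Roberts}) shows that the restrictions $\Gamma(u_h)|_K$ are Lipschitz with one common constant $L_K$ depending only on $M$, $K$ and $\delta$ (of the order $2M/\delta$). Hence the family $\{\,\Gamma(u_h)\,\}_{0<h<h_0}$ is uniformly bounded and equicontinuous on $K$. I would then exhaust $\Omega$ by an increasing sequence of compacts $K_1 \subset K_2 \subset \cdots$ with $K_n \subset (K_{n+1})^\circ$ and $\bigcup_n K_n = \Omega$. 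Applying Arzelà--Ascoli successively on $K_1, K_2, \ldots$ and passing to the diagonal subsequence $h_k$, the functions $\Gamma(u_{h_k})$ converge uniformly on each $K_n$, and therefore uniformly on every compact subset of $\Omega$, to a function $v$. Finally, $v$ is bounded by $M$ and convex: for $x,y \in \Omega$ and $\lambda \in [0,1]$, passing to the limit in $\Gamma(u_{h_k})(\lambda x + (1-\lambda)y) \leq \lambda \Gamma(u_{h_k})(x) + (1-\lambda)\Gamma(u_{h_k})(y)$ yields the convexity of $v$ on $\Omega$.

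The step I expect to require the most care is the quantitative equicontinuity, i.e.\ producing a Lipschitz constant on $K$ that is uniform in $h$. This hinges on having the uniform bound $|\Gamma(u_h)| \leq M$ on a \emph{fixed} neighborhood $K_\delta \subset \Omega$ of $K$, and hence on verifying that $K_\delta \subset \Conv(\mathcal{N}_h)$ for all small $h$ — that the convex hulls of the nodal sets exhaust $\Omega$ from the inside. Once this geometric fact and the local Lipschitz bound are in place, the diagonal extraction and the convexity of the limit are routine.
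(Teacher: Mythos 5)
Your proposal is correct and follows essentially the same route as the paper's own proof: uniform boundedness of $\Gamma(u_h)$, the local Lipschitz property of uniformly bounded convex functions, an exhaustion of $\Omega$ by compact sets (the paper uses $D_i=(\Conv(\Omega_{h_i}))^\circ$ where you use generic compacts $K_n$), Arzel\`a--Ascoli with a diagonal extraction, and convexity of the limit as a uniform limit of convex functions. The geometric fact you flag as delicate --- that $\Conv(\mathcal{N}_h)$ eventually contains any fixed compact subset of $\Omega$ --- is exactly the fact the paper also invokes (without further proof) when asserting $\cup_i D_i=\Omega$.
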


\begin{proof} Let $h_n=1/2^n$, a decreasing sequence such that $h_n \to 0$. 
Put $D_i = (\Conv (\Omega_{h_i}))^\circ$. For $i > j$, 
$D_j \subsetneq D_i$ and $\cup_{i=1}^\infty D_i = \Omega$. Since $\Gamma(u_h)$ is uniformly bounded and convex, for each $i$, $\Gamma(u_{h_j})$ is uniformly Lipschitz on  $\tir{D_i}$ for $j>i+1$. By the Arzela-Ascoli theorem, 
there is a subsequence $h_{n_i(k)}$ such that $\Gamma(u_{ h_{n_i(k)} })$ converges uniformly on $\tir{D_i}$ as $k \to \infty$. We may assume that $h_{n_i(k)}$ is a subsequence of $h_{n_{i-1}(k)}$. For the diagonal sequence $h_{n_k(k)}$, $\Gamma(u_{ h_{n_k(k)} })$ converges uniformly on compact subsets of $\Omega$ to a function which is convex on $\Omega$ as a uniform limit of convex functions.

\Qed
\end{proof}

The next lemma gives conditions under which there is a subsequence whose convex envelopes converge uniformly on $\tir{\Omega}$. The assumption that $\chi_{\Gamma(u_h)}(\mathcal{N}_h)$ is uniformly bounded can be verified for certain discretizations of the second boundary value problem for the Monge-Amp\`ere equation. 

Let $\widetilde{\Omega}$ be a rectangular domain and $U$ an open set 
such that 
$$
\tir{\Omega}  \subset U \subset \widetilde{\Omega}.
$$

\begin{theorem} \label{mass-compact02}
Assume that
$|u_h| \leq M$ for a constant $M$ independent of $h$ and $\chi_{\Gamma(u_h)}(\mathcal{N}_h)$ is uniformly bounded. Then, $\widetilde{\Gamma}(u_h)$ is uniformly bounded on $\tir{\Omega}$ and uniformly Lipschitz on $\tir{\Omega}$. As a consequence, there is a subsequence $h_k$ such that
 $\Gamma(u_{h_k})$ converges uniformly to a convex function $v$ on $\tir{\Omega}$.
\end{theorem}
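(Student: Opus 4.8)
The plan is to derive the result from two uniform estimates on the extension $\widetilde{\Gamma}(u_h)$: a uniform Lipschitz bound supplied by the hypothesis on $\chi_{\Gamma(u_h)}(\mathcal{N}_h)$, together with a uniform sup-bound supplied by $|u_h|\le M$. Once both are available, Arzel\`a--Ascoli and the stability of convexity under uniform limits conclude the argument.

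First I would establish the uniform Lipschitz estimate. Since $\Gamma(u_h)$ is proper, convex and bounded on $\Conv(\mathcal{N}_h)$ and its subdifferential is bounded, Lemma \ref{extension-convex} shows that $\widetilde{\Gamma}(u_h)$ is finite and continuous on all of $\R^d$, so $\chi_{\Gamma(u_h)}(x)\neq\emptyset$ for every $x$. By Theorem \ref{geo-proof}, $\chi_{\Gamma(u_h)}(\R^d)\subset \Conv\big(\chi_{\Gamma(u_h)}(\mathcal{N}_h)\big)$, and the hypothesis provides a constant $K$, independent of $h$, with $\chi_{\Gamma(u_h)}(\mathcal{N}_h)\subset B(0,K)$; as the convex hull of a subset of a ball remains in the ball, $\chi_{\Gamma(u_h)}(x)\subset B(0,K)$ for all $x\in\R^d$ and all $h$. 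Because a convex function whose subgradients are bounded in norm by $K$ is $K$-Lipschitz (combine the two subgradient inequalities at $x$ and at $y$), every $\widetilde{\Gamma}(u_h)$ is $K$-Lipschitz on $\R^d$, uniformly in $h$.

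Next I would upgrade the bound $|\Gamma(u_h)|\le M$ on $\Conv(\mathcal{N}_h)$ --- recorded just before the statement --- to a uniform bound on all of $\tir{\Omega}$. The subtlety is that $\Conv(\mathcal{N}_h)$ need not exhaust $\tir{\Omega}$ near the boundary, so $\widetilde{\Gamma}(u_h)$ must be controlled off $\Conv(\mathcal{N}_h)$ through the Lipschitz estimate. For $x\in\tir{\Omega}$ I would take the nearest point $y\in\Conv(\mathcal{N}_h)$ (which is nonempty since $\Omega_h\neq\emptyset$ and is a compact convex polytope), so that, using $\widetilde{\Gamma}(u_h)=\Gamma(u_h)$ on $\Conv(\mathcal{N}_h)$ and $x,y\in\tir{\Omega}$, one gets $|\widetilde{\Gamma}(u_h)(x)|\le |\Gamma(u_h)(y)|+K|x-y|\le M+K\,\diam(\Omega)$. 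This yields a uniform sup-bound on $\tir{\Omega}$.

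Finally, the family $\{\widetilde{\Gamma}(u_h)\}$ is then uniformly bounded and uniformly equi-Lipschitz on the compact set $\tir{\Omega}$, so Arzel\`a--Ascoli produces a subsequence $h_k$ along which $\widetilde{\Gamma}(u_{h_k})$ --- hence $\Gamma(u_{h_k})$, the two agreeing on $\Conv(\mathcal{N}_{h_k})$ --- converges uniformly on $\tir{\Omega}$ to a limit $v$, which is convex as a uniform limit of convex functions. I expect the only genuinely delicate point to be the boundary gap described above, namely transferring the bound on $\Conv(\mathcal{N}_h)$ to all of $\tir{\Omega}$; this is exactly where the \emph{global} (rather than merely interior) Lipschitz control of the extension $\widetilde{\Gamma}(u_h)$ is indispensable.
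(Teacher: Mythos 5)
Your proposal is correct, and its overall skeleton (uniform sup-bound plus uniform Lipschitz bound, then Arzel\`a--Ascoli and stability of convexity under uniform limits) matches the paper's. The two arguments differ, however, in how and in which order the two estimates are obtained. The paper first uses only the weaker inclusion $\partial \Gamma(u_h)\big((\Conv(\mathcal{N}_h))^\circ\big)\subset \chi_{\Gamma(u_h)}(\mathcal{N}_h)$ from Theorem \ref{geo-proof} to bound the slopes appearing in the extension formula \eqref{extension}; this makes $\widetilde{\Gamma}(u_h)$ uniformly bounded on the \emph{enlarged} domain $\widetilde{\Omega}\supset\tir{\Omega}$, and the uniform Lipschitz property on $\tir{\Omega}$ then follows from the standard fact that a convex function uniformly bounded on an open neighborhood of a compact set is uniformly Lipschitz on that set. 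You instead invoke the stronger conclusion $\chi_{\Gamma(u_h)}(\R^d)\subset \Conv\big(\chi_{\Gamma(u_h)}(\mathcal{N}_h)\big)$ of Theorem \ref{geo-proof} to get a global subgradient bound, deduce the $K$-Lipschitz property on all of $\R^d$ directly from the two subgradient inequalities, and only afterwards propagate the bound $|\Gamma(u_h)|\le M$ from $\Conv(\mathcal{N}_h)$ to $\tir{\Omega}$ across the boundary gap. Both routes are sound and of comparable length; yours has the merit of isolating cleanly the only delicate point (the gap between $\Conv(\mathcal{N}_h)$ and $\partial\Omega$) and is in fact the mechanism the paper itself uses later, in the proof of Theorem \ref{mass-compact04}, via \cite[Lemma 1.1.6]{Guti'errez2001}; the paper's route for the present theorem is marginally more economical in that it never needs the subdifferential inclusion beyond the interior of the convex hull.
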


\begin{proof}

Recall from Theorem \ref{geo-proof} that $\partial \Gamma(u_h) ((\Conv (N_h))^\circ) \subset \chi_{\Gamma(u_h)}(\mathcal{N}_h)$. Since $\chi_{\Gamma(u_h)}(\mathcal{N}_h)$ is uniformly bounded, there is a constant $C$ independent of $h$ such that for all $p \in \partial \Gamma(u_h) ((\Conv (N_h))^\circ)$, $||p||\leq C$. As $|u_h| \leq M$ for a constant $M$ independent of $h$, $\Gamma(u_h)$ is uniformly bounded. From the definition \eqref{extension} of convex extension, $\widetilde{\Gamma}(u_h)$ is uniformly bounded on $\widetilde{\Omega}$. As a convex function, $\widetilde{\Gamma}(u_h)$ is uniformly Lipschitz on $\tir{\Omega}$. The result follows from the Arzela-Ascoli theorem as in the proof of Theorem \ref{mass-compact03}.

\Qed
\end{proof}

We now discuss uniform convergence properties of  a subsequence of $u_h$. We make the additional assumption that $u_h$ is discrete convex. We first construct an interpolant of $u_{h_k}$ defined on $\tir{\Omega}$. % which converges to $v$ on $\tir{\Omega}$. %Recall that $(r_1,\ldots,r_d)$ denotes the canonical basis of $\R^d$ and that $\Conv(\mathcal{N}_h) \subset \tir{\Omega}$. 

We recall that a triangulation is conforming if it is a partition of $\tir{\widetilde{\Omega}}$ into convex hulls of $d+1$ points, and the intersection of two elements is the convex hull of $0 \leq k < d+1$ points.

\begin{lemma} \label{triangulation-lem}
For a mesh function $u_h$, one can find a conforming triangulation $\widetilde{\mathcal{T}}_h$ of $\widetilde{\Omega}$ such that each element of $\mathcal{N}_h$ is a vertex of an element of $\widetilde{\mathcal{T}}_h$ and $\Conv(\mathcal{N}_h) $ is the union of elements of $\widetilde{\mathcal{T}}_h$ and any boundary vertex of $\Conv(\mathcal{N}_h) $ is an element of $\mathcal{N}_h$.  Moreover $\widetilde{\Gamma}(u_h)$ is piecewise linear on $\widetilde{\mathcal{T}}_h$.  

\end{lemma}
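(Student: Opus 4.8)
The goal is to construct a conforming triangulation $\widetilde{\mathcal{T}}_h$ of the rectangular domain $\widetilde{\Omega}$ that refines the induced subdivision on $\Conv(\mathcal{N}_h)$ and extends it to all of $\widetilde{\Omega}$, so that $\widetilde{\Gamma}(u_h)$ is piecewise linear on it. The plan is to build the triangulation in two stages: first triangulate $\Conv(\mathcal{N}_h)$ so that $\widetilde{\Gamma}(u_h) = \Gamma(u_h)$ is piecewise linear there, then extend conformingly to the exterior region $\widetilde{\Omega} \setminus \Conv(\mathcal{N}_h)$.

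For the first stage, I would invoke the induced triangulation $\mathcal{T}(\Gamma(u_h))$ discussed before Theorem \ref{extreme}. Recall that the lower faces of the convex hull $S$ of the points $(x,u_h(x)) \in \R^{d+1}$, $x \in \mathcal{N}_h$, project to a convex decomposition of $\Conv(\mathcal{N}_h)$ whose vertices lie in $\mathcal{N}_h$ and on which $\Gamma(u_h)$ is linear on each cell; subdividing each cell into simplices using only its own vertices produces a triangulation with vertex set contained in $\mathcal{N}_h$, and by \eqref{induced-x}, $\widetilde{\Gamma}(u_h)$ agrees with the piecewise linear function determined by these simplices. The care needed here is to perform the simplicial subdivision of the polyhedral cells \emph{consistently across shared faces} so that the resulting triangulation is conforming; this is a standard fact (e.g.\ a pulling/placing triangulation, or an inductive subdivision respecting a fixed global ordering of the vertices) and can be cited or sketched rather than grinding through.

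For the second stage, I would extend the triangulation of $\Conv(\mathcal{N}_h)$ to the whole rectangle $\widetilde{\Omega}$. Since $\Conv(\mathcal{N}_h)$ is a polytope contained in $\widetilde{\Omega}$, one can triangulate the closure of the complementary region $\widetilde{\Omega} \setminus \Conv(\mathcal{N}_h)$ into simplices whose vertices are among the vertices of $\Conv(\mathcal{N}_h)$ together with the corners of $\widetilde{\Omega}$ (and, if necessary, auxiliary points on $\partial\widetilde{\Omega}$), matching the already-fixed simplicial faces along the shared boundary $\partial \Conv(\mathcal{N}_h)$ so that the global triangulation stays conforming. The claim that every boundary vertex of $\Conv(\mathcal{N}_h)$ lies in $\mathcal{N}_h$ follows from the identity $\mathcal{N}_h \cap \partial\Conv(\mathcal{N}_h) = \partial\Omega_h$ recorded in the Preliminaries, so no new vertices are introduced on $\partial\Conv(\mathcal{N}_h)$ that fail to belong to $\mathcal{N}_h$. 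Finally, $\widetilde{\Gamma}(u_h)$ is affine on each simplex of the exterior region by the representation \eqref{induced-x}, which writes $\widetilde{\Gamma}(u_h)$ as a maximum of the finitely many affine pieces $\Gamma(u_h)|_T$, $T \in \mathcal{T}(\Gamma(u_h))$; a maximum of finitely many affine functions is piecewise linear on a suitable polyhedral subdivision, and one refines the exterior triangulation (again conformingly) so that each exterior simplex lies in a single region where one affine piece attains the maximum.

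The main obstacle I anticipate is purely combinatorial: guaranteeing conformity both within $\Conv(\mathcal{N}_h)$ and across the interface $\partial\Conv(\mathcal{N}_h)$ while refining the exterior so that $\widetilde{\Gamma}(u_h)$ is linear on each simplex. None of the individual steps is deep, but one must choose the simplicial subdivisions coherently so that adjacent cells agree on shared faces; the cleanest route is to fix a single global ordering of all vertices and apply the induced (lexicographic) triangulation throughout, which automatically yields a conforming simplicial complex and respects the piecewise-linear structure of $\widetilde{\Gamma}(u_h)$.
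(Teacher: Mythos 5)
There is a genuine gap, and it concerns precisely the property of the lemma that the rest of the paper needs: that \emph{every} element of $\mathcal{N}_h$ is a vertex of $\widetilde{\mathcal{T}}_h$. The induced triangulation $\mathcal{T}(\Gamma(u_h))$ has as vertices only the contact points, i.e.\ the points $x \in \mathcal{N}_h$ with $\Gamma(u_h)(x) = u_h(x)$; a mesh point $x \in \Omega_h$ with $\Gamma(u_h)(x) < u_h(x)$ sits strictly above the graph of the convex envelope, so $(x,u_h(x))$ is not on a lower face of the hull $S$ and $x$ is never a vertex of the lower-face decomposition. Your stage-one construction therefore yields a triangulation whose vertex set is, as you yourself write, \emph{contained in} $\mathcal{N}_h$ --- in general strictly smaller --- and nothing in stage two adds the missing points. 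This is not a cosmetic omission: the interpolant $I(u_h)$ defined immediately after Lemma \ref{triangulation-lem} must equal $u_h$ at every point of $\mathcal{N}_h$, and this is impossible if some non-contact mesh point lies in the interior of a simplex (or of a face) on which $I(u_h)$ is linear, since there $I(u_h)$ is already determined by its values at the simplex vertices. The paper closes exactly this step: it further subdivides the elements of $\mathcal{T}(\Gamma(u_h))$ using the points of $\mathcal{N}_h$ that are not already vertices, producing a conforming triangulation $\mathcal{T}_h$ of $\Conv(\mathcal{N}_h)$ with vertex set \emph{equal} to $\mathcal{N}_h$; since $\Gamma(u_h)$ is linear on each element being subdivided, it remains piecewise linear on the refinement. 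Your proof needs this refinement step inserted.

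Apart from this, your construction of the exterior part differs from the paper's in order of operations: the paper starts from a triangulation induced by $\widetilde{\Gamma}(u_h)$ on all of $\widetilde{\Omega}$ (which exists because, by \eqref{induced-x}, $\widetilde{\Gamma}(u_h)$ is a maximum of finitely many affine functions) and then subdivides it, so that linearity of $\widetilde{\Gamma}(u_h)$ on each exterior simplex is automatic; the extra vertices used are those of $\tir{\widetilde{\Omega}} \cap \mathbb{Z}^d_h$ together with the boundary vertices of $\mathcal{T}_h$. You instead triangulate $\widetilde{\Omega} \setminus \Conv(\mathcal{N}_h)$ first and then refine to respect the linearity regions; this can be made to work, but note that a linearity region of $\widetilde{\Gamma}(u_h)$ can straddle $\partial \Conv(\mathcal{N}_h)$ (see Figure \ref{extension-fig}), so the conformity of your refinement with the already-fixed interior triangulation along the interface still requires an argument, whereas in the paper's ordering it comes for free.
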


\begin{proof} Let $\widetilde{\mathcal{T}}(\Gamma(u_h))$ denote a triangulation induced by $\widetilde{\Gamma}(u_h)$ on $\widetilde{\Omega}$. Recall that $\widetilde{\Gamma}(u_h)$ is piecewise linear on $\widetilde{\mathcal{T}}(\Gamma(u_h))$. Recall also that $\mathcal{T}(\Gamma(u_h))$ denote a triangulation induced by  $\Gamma(u_h)$ onto $\Conv(\mathcal{N}_h) $. %Note that $\Gamma(u_h) = u_h$ on the set of boundary nodes of $\mathcal{T}(\Gamma(u_h))$. %If $x \in \mathcal{N}_h$ and $x$ is not a vertex of $\mathcal{T}(\Gamma(u_h))$, then $x$ must 
We use the points of $\mathcal{N}_h$ which are not vertices of $\mathcal{T}(\Gamma(u_h))$ to create a conforming triangulation of $\Conv(\mathcal{N}_h) $ by subdividing elements of  $\mathcal{T}(\Gamma(u_h))$. This gives a triangulation $\mathcal{T}_h$ of $\Conv(\mathcal{N}_h) $ with set of vertices equal to $\mathcal{N}_h$ and $\Gamma(u_h)$ is piecewise linear on $\mathcal{T}_h$. Finally elements of $\widetilde{\mathcal{T}}(\Gamma(u_h))$ are subdivided to form a triangulation of $\widetilde{\Omega} \setminus \Conv(\mathcal{N}_h)$ with vertices in the union of $\tir{\widetilde{\Omega}} \cap \mathbb{Z}^d_h$ and the boundary vertices of $\mathcal{T}_h$. This results in the desired triangulation $\widetilde{\mathcal{T}}_h$ of $\widetilde{\Omega}$. 

\Qed

\end{proof}

Let $I(u_h)$ be the piecewise linear continuous function which is equal to $u_h$ on $\mathcal{N}_h$ and equal to $\widetilde{\Gamma}(u_h)$ at the other vertices of $\widetilde{\mathcal{T}}_h$. By construction $I(u_h) = \Gamma(u_h)$ on the boundary of $\Conv(\mathcal{N}_h) $ when $u_h=\Gamma(u_h)$ on $\partial \Omega_h$. We note that $I(u_h)$ may not be convex.

%We may assume that the interpolant $I(u_h)$ is piecewise linear along the coordinate axes, i.e. the line segments in $\tir{\Omega}$ through $x \in \Omega_h$ and directions $e \in \{ \, r_1,\ldots,r_d\, \}$ the canonical basis of $\R^d$. 

\begin{lemma} \label{univariate}

Let $u_{h}$ be discrete convex. The univariate piecewise linear interpolant $U$ of $u_h$ on the line $L$ through $x_0  \in \Omega_h$ and direction $e \in V$ is convex. 
\end{lemma}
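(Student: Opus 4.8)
The plan is to use the elementary fact that a univariate piecewise linear function is convex if and only if its slopes are non-decreasing across its breakpoints, i.e. at every interior breakpoint the slope of the piece to the right is at least the slope of the piece to the left. So I would first describe the nodes of $\mathcal{N}_h$ lying on the line $L = \{\, x_0 + \tau e : \tau \in \R \,\}$, parametrised by the coefficient $\tau$ of $e$. Since $\Omega$ is convex and $x_0 \in \Omega$ is interior, the set of $\tau$ with $x_0 + \tau e \in \Omega$ is an open interval, so the interior nodes $\Omega_h \cap L$ form a contiguous string of grid points $x_0 + jhe$, $j_{\min} \le j \le j_{\max}$, at parameter spacing $h$. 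Stepping once past each end in the directions $\pm e \in V$ leaves $\tir{\Omega}$, and by the definition of $h^{\pm e}_x$ together with \eqref{boundary-domain}, the two boundary crossings $x_{\max} + h^{e}_{x_{\max}} e$ and $x_{\min} - h^{-e}_{x_{\min}} e$ are exactly the two outermost nodes of $\mathcal{N}_h$ on $L$. Thus $U$ is piecewise linear with breakpoints precisely at these nodes, the two boundary nodes being the endpoints.

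The core step is to recognise that at each interior breakpoint $x = x_0 + jhe \in \Omega_h$ the discrete convexity inequality $\Delta_e u_h(x) \ge 0$ is \emph{identical} to the slope inequality for $U$. The key observation is that the two neighbours of $x$ along $L$ inside $\mathcal{N}_h$ are exactly the points $x + h^{e}_x e$ and $x - h^{-e}_x e$ appearing in \eqref{Delta-e}: for an interior node not adjacent to $\partial\Omega$ one has $h^{e}_x = h^{-e}_x = h$ and these are the neighbouring grid points, while for a node adjacent to the boundary one of them is the boundary node at parameter distance $h^{e}_x$ (resp.\ $h^{-e}_x$). Since parameter distance along $L$ is the coefficient of $e$, the right and left slopes of $U$ at $x$ are $\big(u_h(x + h^{e}_x e) - u_h(x)\big)/h^{e}_x$ and $\big(u_h(x) - u_h(x - h^{-e}_x e)\big)/h^{-e}_x$, and their difference equals $\tfrac{1}{2}(h^{e}_x + h^{-e}_x)\,\Delta_e u_h(x)$. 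As $e \in V$ and $x \in \Omega_h$, Definition \ref{discrete-convex} yields $\Delta_e u_h(x) \ge 0$, so the right slope dominates the left slope at every interior breakpoint, which is exactly convexity of $U$ (convexity being invariant under the affine reparametrisation $\tau \mapsto x_0 + \tau e$).

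The main obstacle I anticipate is the bookkeeping at the boundary-adjacent nodes: I must verify that the next point past $x_{\max}$ in direction $e$ (and past $x_{\min}$ in direction $-e$) is genuinely the node $x_{\max} + h^{e}_{x_{\max}} e \in \partial\Omega_h$ and that no grid node is skipped. This is where the convexity of $\Omega$ (the line meets $\partial\Omega$ in exactly two points, so at most one boundary node at each end) and the cap $r \in [0,1]$ in the definition of $h^{e}_x$ are essential, since they guarantee that the neighbour in direction $e$ is either the next grid node, when it lies in $\tir{\Omega}$, or the boundary crossing, when that comes first. I would also note $h^{\pm e}_x > 0$ because $x \in \Omega_h \subset \Omega$ is interior, so no degenerate zero-length piece arises. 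The endpoints of $U$ impose no convexity constraint, so the two boundary nodes need no condition, and the argument is complete.
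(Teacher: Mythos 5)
Your proposal is correct and follows essentially the same route as the paper: both proofs hinge on the observation that $\Delta_e u_h(x) \ge 0$ at each interior node $x$ is precisely the statement that the slope of $U$ on the piece to the right of $x$ (with neighbours $x \pm h^{\pm e}_x e$) dominates the slope on the piece to the left. The only difference is the packaging of the last step: the paper chains these slope inequalities to exhibit an explicit element of a one-dimensional subdifferential of $U$ at every point of $L \cap \Omega$ and concludes convexity from nonemptiness of that subdifferential, whereas you invoke the standard local criterion that a univariate piecewise linear function with non-decreasing slopes across its breakpoints is convex.
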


\begin{proof} 
We define for $x \in L \cap \Omega$
$$
\partial U(x)  =\{ \, q \in \R, U(y) - U(x) \geq q e \cdot (y-x) \text{ for all } y \in L \cap \Omega \, \}.
$$

For $x \in L\cap \Omega_h$, put $x_+=x+h_x^e e$ and $x_- = x-h_x^{-e} e$. We claim that %for $x \in L\cap \Omega_h$
\begin{align*}
\partial U(x) & = \frac{1}{||e||^2}\bigg[  \frac{u_h(x)-u_h(x_- ) }{h^{-e}_x} , \frac{u_h(x_+ ) - u_h(x)}{h^e_x}\bigg],  x \in  L\cap \Omega_h \\
\partial U(y) & = \frac{1}{h_x^e ||e||^2 }(u_h(x_+) - u_h(x)) \ \text{ for } y \in (x, x_+), x \in  L\cap \Omega_h\\
\partial U(y) & = \frac{1}{h_x^{-e} ||e||^2 }(u_h(x) - u_h(x_-)) \ \text{ for } y \in (x_-, x), x \in  L\cap \Omega_h.
\end{align*}
%The second assertion is immediate since $U$ is linear on $[x, x_+]$. For the first, \ldots
Since for $x \in L\cap \Omega_h$, $U$ is linear on $[x,x_+]$ and $[x_-, x]$, for $y \in (x, x_+)$ or $y \in (x_-, x)$, $\partial U(y)$ is reduced to at most one point. %It is therefore enough to show that  $(u_h(x_+) - u_h(x))/(h_x^e ||e||^2) \in \partial U(x)$, i.e. 
%The second assertion follows from the first. 
Let $x \in L\cap \Omega_h$. We show that $p=(u_h(x_+) - u_h(x))/(h_x^e ||e||^2) \in \partial U(x)$. The other cases are similar. 

Let $k, l \geq 0$ be integers such that $x+kh e \in \Omega_h$, $x+(k+1)h e \not \in \Omega_h$, $x-lh e \in \Omega_h$ and $x-(l+1)h e \not \in \Omega_h$. We have by discrete convexity
\begin{multline*}
\frac{u_h(x+kh e + h^e_{x+kh e} e ) - u_h(x+kh e) }{h^e_{x+kh e}} \geq \frac{ u_h(x+kh e) -u_h(x+(k-1)h e)}{h} \geq \\ \ldots \geq 
\frac{u_h(x+h) - u_h(x) }{h} \geq \frac{u_h(x) - u_h(x-h) }{h}  \geq \ldots \geq \\ \frac{u_h(x-(l-1)h e) - u_h(x- l h e)}{h} 
\geq \frac{u_h(x- l h e) - u_h(x- l h e -h^{-e}_{x-lhe} e) }{ h^{-e}_{x-lhe} }.
\end{multline*}
Using the above equation and an induction argument we have for $0 \leq r \leq k$, $u_h(x+rh e) - u_h(x) \geq k (u_h(x+h)-u_h(x))$ and for $0 \leq s \leq l$, $u_h(x-sh) - u_h(x) \geq (-s) (u_h(x+h)-u_h(x))$.  This gives for all $z \in L \cap \Omega_h$, $u_h(z)-u_h(x) \geq p e \cdot(z-x)$. With a similar argument, this also holds for $z=x+kh e + h^e_{x+kh e} e$ and $z=x- l h e -h^{-e}_{x-lhe} e$. We conclude that
$p=(u_h(x_+) - u_h(x))/(h_x^e ||e||^2) \in \partial U(x)$.

Since  $\partial U(x) \neq \emptyset$ for all $x \in L \cap \Omega$, $U$ is convex. To see this, let $x_1, x_2 \in L \cap \tir{\Omega}$ and $\lambda \in (0,1)$. Put $x=\lambda x_1 + (1-\lambda) x_2$ and choose $p \in \partial U(x)$. We have
\begin{align*}
U(x_1) & \geq U(x)+ p e \cdot (x_1-x) \\
U(x_2) & \geq U(x)+ p e \cdot (x_2-x). 
\end{align*}
Therefore $\lambda U(x_1) + (1- \lambda) U(x_2) \geq U(x)$. Since $U$ is continuous, this also holds for $\lambda \in [0,1]$.

\Qed

\end{proof}

\begin{theorem} \label{loc-u}

 Let $u_h$ be a family of discrete convex functions  such that $|u_h| \leq M$ for a constant $M$ independent of $h$. The interpolant $I(u_h)$ is locally uniformly Lipschitz on $\Omega$. As a consequence there is a subsequence $h_k$ such that both
 $\Gamma(u_{h_k})$ and $u_{h_k}$ converge uniformly on compact subsets of $\Omega$ to continuous convex functions on $\Omega$.

\end{theorem}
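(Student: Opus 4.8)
The plan is to establish the two assertions in order: first that the interpolant $I(u_h)$ is uniformly bounded and locally uniformly Lipschitz on $\Omega$, and then to extract the convergent subsequences via Arzela--Ascoli exactly as in Theorem \ref{mass-compact03}. Throughout I work on $\Conv(\mathcal{N}_h) \supset \Omega$, where the vertices of the triangulation are the nodes $\mathcal{N}_h$ and $I(u_h)=u_h$ on them. Uniform boundedness is immediate: at any point of $\Conv(\mathcal{N}_h)$ the interpolant is a convex combination of nodal values $u_h(y)$, all bounded by $M$, so $|I(u_h)|\le M$.

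The heart of the matter is a bound on the lattice-direction difference quotients of $u_h$. First I would fix a compact $K\subset\Omega$ and $\delta>0$ with $d(K,\partial\Omega)>2\delta$. By Lemma \ref{univariate}, for each $e\in V$ the univariate interpolant $U$ of $u_h$ along the line through an interior node in direction $e$ is convex and bounded by $M$; since the slopes of a bounded convex function are controlled in the interior (at a point at distance $\ge\delta$ from the endpoints the slope is at most $2M/\delta$ in absolute value), I obtain
$$
|u_h(x+he)-u_h(x)|\le \frac{2M\sqrt d}{\delta}\,h
$$
for all $x$ in the $\delta$-neighbourhood of $K$, every $e\in\{0,\pm1\}^d\subset V$, and all $h$ small. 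Next I would bound $|\nabla I(u_h)|$ on the simplices of $\widetilde{\mathcal{T}}_h$ meeting $K$; for $h$ small these lie in the interior of $\Conv(\mathcal{N}_h)$ and have all vertices in $\Omega_h$. Using the freedom in the construction of Lemma \ref{triangulation-lem}, I would commit to a subdivision whose interior simplices are shape regular with edge vectors of the form $he$, $e\in\{0,\pm1\}^d$; then $|\nabla I(u_h)|_T|\le C(d)\max_e |u_h(x+he)-u_h(x)|/h\le C(d,M,\delta)$, and a piecewise linear function with gradient bounded by $L$ on a convex neighbourhood of $K$ is $L$-Lipschitz there.

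The hard part is precisely this last translation: passing from one-dimensional slope bounds, valid only along lattice lines through nodes, to a genuine bound on the gradient of the (generally non-convex) interpolant on every simplex. The obstruction is that an arbitrary lattice triangulation may contain thin simplices on which bounded nodal data still yields a large gradient; the remedy is to insist on a shape-regular subdivision refining the lattice cells—consistent with Lemma \ref{triangulation-lem}—so that only short lattice edges occur and the difference-quotient estimate applies directly.

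Granting the Lipschitz bound, the convergence statement follows. Since $I(u_h)$ is uniformly bounded and locally uniformly Lipschitz, the Arzela--Ascoli diagonal argument of Theorem \ref{mass-compact03} gives a subsequence with $I(u_{h_k})\to w$ uniformly on compact subsets of $\Omega$, $w$ continuous; as $I(u_{h_k})=u_{h_k}$ on $\mathcal{N}_{h_k}$, this is the asserted uniform convergence of $u_{h_k}$ to $w$. To see that $w$ is convex I would compare $I(u_{h_k})$ along any lattice line with the univariate convex interpolant $U_{h_k}$ of Lemma \ref{univariate}: the two agree at consecutive nodes (at distance $O(h_k)$) and are uniformly Lipschitz, hence differ by $O(h_k)$ on compact subsets, so $U_{h_k}\to w$ along the line and $w$ is convex in every direction of $V=\mathbb{Z}^d$, thus convex. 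Finally, applying Theorem \ref{mass-compact03} to this subsequence extracts a further subsequence along which $\Gamma(u_{h_k})$ converges uniformly on compact subsets to a convex function $v$. Both limits are continuous and convex, as claimed.
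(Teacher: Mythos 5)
Your proposal follows the paper's own proof almost step for step: Lemma \ref{univariate} gives slope bounds along lattice lines, hence a uniform Lipschitz bound for the nodal values of $u_h$ on $\Conv(\Omega_{h_K})$; this is converted into a gradient bound for the piecewise linear interpolant; the Arzela--Ascoli diagonal argument of Theorem \ref{mass-compact03} yields a subsequence along which the interpolants, hence the mesh functions $u_{h_k}$, converge locally uniformly; convexity of the limit is obtained by comparison with the univariate convex interpolants (this is exactly Lemma \ref{unif-disc-conv}, which the paper invokes at this point); and a further application of Theorem \ref{mass-compact03} handles $\Gamma(u_{h_k})$. The one place where you deviate is the conversion step, and there your remedy is not available. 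The interpolant $I(u_h)$ is not free to be redefined: by definition it lives on a triangulation $\widetilde{\mathcal{T}}_h$ furnished by Lemma \ref{triangulation-lem}, and that lemma requires $\widetilde{\Gamma}(u_h)$ to be piecewise linear on $\widetilde{\mathcal{T}}_h$, i.e. $\widetilde{\mathcal{T}}_h$ must refine the data-dependent induced triangulation $\mathcal{T}(\Gamma(u_h))$. The affine pieces of $\Gamma(u_h)$ can be empty lattice simplices of arbitrarily bad aspect ratio: in $d=2$ the triangle $T=\Conv\{(0,0),(h,0),(Nh,h)\}$ contains no point of $h\mathbb{Z}^2$ besides its vertices, and it can occur as a face of the envelope (take $u_h$ to be the nodal restriction of a convex piecewise linear function whose zero set is exactly $T$); such an element admits no subdivision with vertices in $\mathcal{N}_h$ and therefore must appear unrefined in $\widetilde{\mathcal{T}}_h$. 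Consequently no triangulation compatible with Lemma \ref{triangulation-lem} can be shape regular with edge vectors $he$, $e\in\{0,\pm1\}^d$; your claim of consistency is false, and what you bound is a different (Kuhn-type) interpolant, not $I(u_h)$. (A smaller point: your slope bound uses diagonal directions, so it needs $\{0,\pm 1\}^d\subset V$, whereas the paper's version uses only the canonical directions $r_i$, which lie in $V$ by assumption.)

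To be fair, the step you worried about is precisely where the paper's own proof is thinnest: it derives $|p\cdot(v_i-v_1)|\le C\|v_i-v_1\|$ and concludes $\|p\|\le C$ ``since $T$ is non degenerate,'' an inference that is valid only under an aspect-ratio bound which Lemma \ref{triangulation-lem} does not supply. Note, however, that the forced thin elements described above are harmless for the paper's $I(u_h)$: their vertices are contact points, so on them $I(u_h)=\Gamma(u_h)$, whose constant gradient is controlled by the local Lipschitz constant of the bounded convex envelope. The genuinely unresolved case, in your proposal and in the paper alike, is that of thin sub-simplices created when an induced element containing interior nodes with $u_h>\Gamma(u_h)$ is subdivided; there one does have freedom in choosing the subdivision, and a complete proof must exploit that freedom (or the structure of discrete convexity) to get a uniform gradient bound. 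Finally, your argument does establish the second, main assertion of the theorem: your interpolant agrees with $u_h$ on $\mathcal{N}_h$ and is locally uniformly Lipschitz, so the Arzela--Ascoli, convexity, and Theorem \ref{mass-compact03} chain gives subsequences along which $u_{h_k}$ and $\Gamma(u_{h_k})$ converge uniformly on compact subsets to continuous convex limits. What it does not deliver is the first assertion as stated, namely the local Lipschitz property of the paper's $I(u_h)$ itself, which is the object Theorem \ref{mass-compact04} relies on later.
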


\begin{proof}

Let $K$ be a compact subset of $\Omega$ and choose $h_K$ such that $K \subset \Conv(\Omega_{h_K})$. Let $\delta_K >0$ such that the $\delta_K$ neighborhood of $\Conv(\Omega_{h_K})$ is contained in $\Omega$. Given $x_0 \in \Omega_{h_K}$, by Lemma \ref{univariate}, we have $| u_h(x) - u_h(y)| \leq 2M/\delta_K ||x-y||$ for all $x, y \in \Omega_{h_K}$, $x=x_0 + \alpha h e$, 
$y=x_0 + \beta h e$ for some integers $\alpha$ and $\beta$ and $e$ an element of the canonical basis of $\R^d$.

Let us now assume that $y=x+\sum_{i=1}^d m_i h r_i$ for integers $m_i, i=1,\ldots,d$. We have
\begin{align*}
u_h(x)-u_h(y) = \sum_{k=0}^{d-1} u_h\bigg(x+\sum_{i=1}^k m_i h r_i \bigg) -  u_h\bigg(x+\sum_{i=1}^{k+1} m_i h r_i \bigg),
\end{align*}
where $\sum_{i=1}^0 m_i h r_i =0$. We obtain $|u_h(x)-u_h(y)| \leq 2M h/\delta_K \sum_{k=0}^{d-1} |m_{k+1}| = 2M/\delta_K ||x-y||_1 \leq 2M d/\delta_K ||x-y||$ where $||.||_1$ denotes the 1-norm. We conclude that $u_h$ is uniformly Lipschitz on $\Conv(\Omega_{h_K})$. 
We claim that the interpolant $I(u_h)$ is thus locally uniformly Lipschitz on $\Omega$. 

As in the proof of Theorem \ref{mass-compact03}, we obtain a subsequence $h_k$ such that $I(u_{h_k})$ and hence $u_{h_k}$ converges uniformly on compact subsets a function $w \in C(\Omega)$. 
We prove in Lemma \ref{unif-disc-conv} below that $w$ is convex. Applying again Theorem \ref{mass-compact03}, we have a subsequence also denoted $h_k$
such that  $\Gamma(u_{h_k})$ converges uniformly on compact subsets of $\Omega$ to a continuous convex function on $\Omega$.

To see that $I(u_h)$ is uniformly Lipschitz on $\Conv(\Omega_{h_K})$, we first note that $I(u_h)$ is uniformly Lipschitz on each element $T$ with vertices in $\Conv(\Omega_{h_K})$.  Let $\<v_1,\ldots,v_{d+1}\>$ denote the vertices of $T$. There exists $\zeta_i \in T$ such that for all $i=2,\ldots,d+1$, 
$I(u_h)(v_i)- I(u_h)(v_1) = D I(u_h) (\zeta_i) \cdot (v_i-v_1)$. Since $I(u_h)$ is linear on $T$, $D I(u_h) (\zeta_i)=D I(u_h) (\zeta_2)\equiv p$ for all $i$. Now, 
$$
| I(u_h)(v_i)- I(u_h)(v_1) | = |u_h(v_i) - u_h(v_1)| \leq C ||v_i-v_1||, i=2,\ldots,d+1.
$$
Therefore $|p\cdot (v_i-v_1)| \leq C ||v_i-v_1||, i=2,\ldots,d+1$. Since $T$ is non degenerate, we obtain
$||p|| \leq C$ with $C$ independent of $h$. Again, by the linearity of $I(u_h)$ on $T$, we can find a vector $b \in \R^d$ such that for all $x \in T$, $I(u_h)(x) = p \cdot x+b$. This implies that for all $x,y \in T$, $|I(u_h)(x)  - I(u_h)(y) | \leq C ||y-x||$. 

Next, let $x, y \in \Conv(\Omega_{h_K})$. Put $x_1=x$ and $x_{N+1}=y$. The line through $x$ and $y$ intersects a sequence of elements $T_1, \ldots, T_N$ of $\widetilde{\mathcal{T}}_h$ with $x_1 \in T_1$ and $x_{N+1} \in T_N$. 
We may assume that $T_i \subset \Conv(\Omega_{h_K})$ for $i=1,\ldots,N$ by taking a further subdivision of the triangulation obtained from  Lemma \ref{triangulation-lem}.
Let $x_i, i=2,\ldots,N$ be points on the line such that $x_i \in \partial T_{i-1} \cap \partial T_{i}$.  As $I(u_h)$ is linear on  $T_i$ for all $i$, we have $| I(u_h)(x_i) - I(u_h)(x_{i-1})| \leq C ||x_i - x_{i-1}||$ for all $i$. Since the points $x_i$ are colinear, 
$||y-x|| = \sum_{i=2}^{N+1} ||x_i-x_{i-1}||$, from which the uniform Lipschitz property on $\Conv(\Omega_{h_K})$ follows.

\Qed

\end{proof}

A priori, the subsequences from Theorem \ref{loc-u} may converge to different limits as there could be points $x$ where $u_h(x) > \Gamma(u_h)(x)$. We will give below conditions under which the limits are the same. 

\begin{theorem} \label{mass-compact04}
Assume that
$|u_h| \leq M$ for a constant $M$ independent of $h$ and $\chi_{\Gamma(u_h)}(\mathcal{N}_h)$ is uniformly bounded. Assume furthermore that $u_h$ is uniformly Lipschitz on $\tir{\Omega}$.  Then $I(u_h)$ is uniformly bounded on $\tir{\Omega}$ and uniformly Lipschitz on $\tir{\Omega}$. As a consequence, there is a subsequence $h_k$ such that both
 $\Gamma(u_{h_k})$ and $u_{h_k}$ converge uniformly to a convex function $v$ on $\tir{\Omega}$.
\end{theorem}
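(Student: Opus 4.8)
The plan is to combine the conclusions of Theorem \ref{mass-compact02} with the interpolation estimate underlying Theorem \ref{loc-u}, and then to identify the two limiting functions. Since $|u_h|\le M$ and $\chi_{\Gamma(u_h)}(\mathcal{N}_h)$ is uniformly bounded, Theorem \ref{mass-compact02} already supplies a constant $L_1$ independent of $h$ such that $\widetilde{\Gamma}(u_h)$ is uniformly bounded and $L_1$-Lipschitz on $\tir{\Omega}$, together with a subsequence, call it $h_k$, for which $\Gamma(u_{h_k})$ converges uniformly on $\tir{\Omega}$ to a convex function $v$. I would fix this $v$ once and for all and aim to show that, after a further extraction, the same subsequence carries $u_{h_k}$ to $v$ as well.

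First I would establish the two stated properties of $I(u_h)$. Uniform boundedness is immediate: at the vertices of $\widetilde{\mathcal{T}}_h$ lying in $\mathcal{N}_h$ one has $|I(u_h)|=|u_h|\le M$, while at the remaining vertices $I(u_h)=\widetilde{\Gamma}(u_h)$ is controlled by the bound from Theorem \ref{mass-compact02}; being piecewise linear, $I(u_h)$ attains its extrema at vertices. For the Lipschitz bound I would reproduce the element-by-element argument from the proof of Theorem \ref{loc-u}: on each simplex $T$ the gradient $p$ of the affine piece satisfies $p\cdot(v_i-v_1)=I(u_h)(v_i)-I(u_h)(v_1)$, so a uniform estimate $|I(u_h)(v_i)-I(u_h)(v_1)|\le C\|v_i-v_1\|$ on vertex differences yields $\|p\|\le C'$ by nondegeneracy, after which one chains the bound along the segment joining two arbitrary points of $\tir{\Omega}$. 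For two vertices in $\mathcal{N}_h$ the hypothesis that $u_h$ is uniformly Lipschitz gives this with some constant $L_2$; for two vertices of the second type it follows from the $L_1$-Lipschitz continuity of $\widetilde{\Gamma}(u_h)$.

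The delicate case, which I expect to be the main obstacle, is a simplex straddling $\partial\Conv(\mathcal{N}_h)$, whose vertices mix a node $v\in\partial\Omega_h$ (where $I(u_h)=u_h$) with an exterior lattice vertex $w$ (where $I(u_h)=\widetilde{\Gamma}(u_h)$). Writing $|u_h(v)-\widetilde{\Gamma}(u_h)(w)|\le|\widetilde{\Gamma}(u_h)(v)-\widetilde{\Gamma}(u_h)(w)|+\big(u_h(v)-\Gamma(u_h)(v)\big)$, the first term is again $\le L_1\|v-w\|$, so everything reduces to bounding the nonnegative defect $u_h(v)-\Gamma(u_h)(v)$ at boundary nodes by a multiple of the local mesh spacing. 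I would control this by exploiting that such $v$ lies on $\partial\Conv(\mathcal{N}_h)$, so that $\Gamma(u_h)$ on the containing face is the convex envelope of $u_h$ over the nodes of that face, and then combining the uniform Lipschitz bound on $u_h$ with the induced-triangulation description of $\Gamma(u_h)$ from Theorem \ref{geo-proof}. This is the step requiring genuine care, since the gap $\tir{\Omega}\setminus\Conv(\mathcal{N}_h)$ near $\partial\Omega$ must be shown to contribute a Lipschitz constant that stays bounded as $h\to0$.

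Granting the uniform Lipschitz bound, the conclusion follows in two moves. By the Arzel\`a--Ascoli theorem I extract from $h_k$ a further subsequence, still denoted $h_k$, with $I(u_{h_k})\to w$ uniformly on $\tir{\Omega}$; since $I(u_{h_k})=u_{h_k}$ on $\mathcal{N}_{h_k}$, this is exactly the uniform convergence of $u_{h_k}$ to $w$ in the sense of Definition \ref{def-cv-mesh}. The limit $w$ is convex by Lemma \ref{unif-disc-conv}, where the discrete convexity of $u_h$ operative in this part of the paper is used. Finally I would identify $w$ with $v$: the inequality $\Gamma(u_{h_k})\le u_{h_k}$ on $\mathcal{N}_{h_k}$ passes to the limit to give $v\le w$, while for the reverse inequality I fix $x_0\in\Omega$, take a supporting hyperplane $L$ of the convex function $w$ at $x_0$, and observe that uniform convergence forces $u_{h_k}\ge L-\epsilon_k$ on $\mathcal{N}_{h_k}$ with $\epsilon_k\to0$; as $L-\epsilon_k$ is affine it is one of the minorants defining $\Gamma$, whence $\Gamma(u_{h_k})\ge L-\epsilon_k$ on $\Conv(\mathcal{N}_{h_k})$ and, in the limit, $v(x_0)\ge L(x_0)=w(x_0)$. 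Thus $w=v$, both sequences converge uniformly on $\tir{\Omega}$ to the convex function $v$, and the boundary values are recovered from the uniform Lipschitz bound just established.
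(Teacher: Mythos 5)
Most of your proposal tracks the paper's argument: the boundedness of $I(u_h)$, the element-by-element gradient bound with chaining along segments, the Arzel\`a--Ascoli extraction, the convexity of the limit via Lemma \ref{unif-disc-conv}, and the identification of the two limits (your supporting-hyperplane argument is a correct, self-contained substitute for the paper's citation of Lemma \ref{cvg-cvx}). The genuine gap is precisely the case you yourself flag as ``the step requiring genuine care'': simplices having one vertex in $\partial \Omega_h$, where $I(u_h)=u_h$, and one vertex outside $\Conv(\mathcal{N}_h)$, where $I(u_h)=\widetilde{\Gamma}(u_h)$. You reduce this case to the defect estimate $u_h(v)-\Gamma(u_h)(v)\leq C h$ at boundary nodes $v$ and then leave that estimate unproven; since this is the only place where the two halves of the interpolant are tied together, the proof is incomplete exactly at its crux.

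Moreover, the route you sketch for the defect bound cannot work as described. You propose to combine the face-restriction property of the convex envelope (as in Lemma \ref{cv-bd0}) with the uniform Lipschitz bound on $u_h$; but uniformly Lipschitz data on a face can lie at distance $O(1)$ above its convex envelope over that face --- a tent profile along an edge has Lipschitz constant $1$ and envelope defect $1/2$ at the midpoint --- so no bound of order $h$ can follow from those two ingredients alone. Ruling out such boundary data requires bringing in the discrete convexity of $u_h$ along interior rows near the face (or an equivalent mechanism), which your sketch never invokes. The paper avoids the defect entirely: by the construction of $I(u_h)$ together with the agreement $\Gamma(u_h)=u_h$ on $\partial \Omega_h$ (the proviso stated right after the definition of $I(u_h)$, and guaranteed in the paper's applications by Lemma \ref{cv-bd0}), the interpolant coincides with the uniformly Lipschitz function $\widetilde{\Gamma}(u_h)$ on every element of $\widetilde{\mathcal{T}}_h$ not contained in $\Conv(\mathcal{N}_h)$, that is, on the whole outer region $\Omega_1$; hence there is no ``mixed'' comparison to make. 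The two Lipschitz estimates, on $\Conv(\mathcal{N}_h)$ and on $\Omega_1$, are then glued by splitting the segment joining $x\in\Omega_1$ to $y\in\Conv(\mathcal{N}_h)$ at its intersection point $z$ with $\partial \Conv(\mathcal{N}_h)$, where $I(u_h)(z)=\Gamma(u_h)(z)$, and adding the two bounds using the collinearity of $x$, $z$, $y$. If you wish to keep your structure, you should either import this boundary agreement (which makes your defect identically zero and collapses your mixed case into the pure one), or supply the missing argument that transmits discrete convexity from interior rows to the boundary values.
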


\begin{proof}

As in the proof of Theorem \ref{loc-u}, the interpolant $I(u_h)$ is uniformly Lipschitz on $\Conv(\mathcal{N}_h)$. Next, we note that $\widetilde{\Gamma}(u_h)$ is uniformly Lipschitz on  $\widetilde{\Omega}$. Indeed, by Theorem \ref{geo-proof}, $\chi_{\Gamma(u_h)}(\R^d) \subset  \Conv \big( \chi_{\Gamma(u_h)}(\mathcal{N}_h)\big)$ and is therefore uniformly bounded. By 
\cite[Lemma 1.1.6]{Guti'errez2001} $\widetilde{\Gamma}(u_h)$ is uniformly Lipschitz on $\widetilde{\Omega}$. We reproduce here the proof. 

Since $\chi_{\Gamma(u_h)}(\R^d)$ is bounded, 
for all $x \in \widetilde{\Omega}$ and $p \in \chi_{\Gamma(u_h)}(x)$, $||p|| \leq C$ with $C$ independent of $h$.
For $x, y \in \widetilde{\Omega}$ and $p \in  \chi_{\Gamma(u_h)}(x)$, $\widetilde{\Gamma}(u_h)(y) -  \widetilde{\Gamma}(u_h)(x) \geq p \cdot (y-x) \geq - ||p|| \, ||y-x|| \geq -C \, ||y-x||$. Reversing the roles of $x$ and $y$, we obtain $\widetilde{\Gamma}(u_h)(y) -  \widetilde{\Gamma}(u_h)(x)| \leq C ||y-x||$ for all $x, y \in \widetilde{\Omega}$.

By construction, $I(u_h) = \Gamma(u_h)$ on $\Omega_1= \cup \{ \, T \in \widetilde{\mathcal{T}}_h,  T \not\subset \Conv(\mathcal{N}_h) \, \}$. Since $\widetilde{\Gamma}(u_h)$ is uniformly Lipschitz on  $\widetilde{\Omega}$, for all $x, y \in \Omega_1$, we have $|I(u_h)(x)  - I(u_h)(y) | \leq C ||y-x||$. It remains to consider the case $x \in \Omega_1$ and $y \in \Conv(\mathcal{N}_h)$. We consider the line through $x$ and $y$. It intersects $\partial \Conv(\mathcal{N}_h)$ at a point $z$. 

We have by construction $I(u_h)(z) = \Gamma(u_h)(z)$ because the triangulation of Lemma \ref{triangulation-lem} is a subdivision of a triangulation induced by 
$\widetilde{\Gamma}(u_h)$ on $\widetilde{\Omega}$. Thus
$|I(u_h) (x) - I(u_h) (y)| \leq |I(u_h) (x) - I(u_h) (z)| + |I(u_h) (z) - I(u_h) (y)| \leq ||x-z||+||z-y|| = ||x-y||$ where we used the uniform Lipschitz property on $\Omega_1$ and $\Conv(\mathcal{N}_h)$ as well as the colinearity of $x, y$ and $z$. 

We conclude that $I(u_h)$ is uniformly Lipschitz on $\tir{\Omega}$, and uniformly bounded on $\tir{\Omega}$ since $|u_h|\leq M$. 

Using Theorem \ref{mass-compact02} we obtain a subsequence $u_{h_k}$ such that $\Gamma(u_{h_k})$ converges uniformly to a convex function $v$ on $\tir{\Omega}$. By the Arzela-Ascoli theorem, we have a further subsequence also denoted $u_{h_k}$ such that $u_{h_k}$ converges uniformly to a function $w$ on $\tir{\Omega}$. As a uniform limit of the continuous functions $I(u_{h_k})$, $w \in C(\tir{\Omega})$.
We show below in Lemma \ref{unif-disc-conv}, that $w$ is convex on $\tir{\Omega}$.
By Theorem \ref{cvg-cvx}, $\Gamma(u_{h_k})$ converges to $w$ uniformly on compact subsets of $\Omega$.  Thus $v=w$ on $\Omega$ and since $v$ and $w$ are continuous as uniform limit of continuous functions $\Gamma(u_{h_k})$ and $I(u_{h_k})$, $v=w$ on $\tir{\Omega}$.

\Qed

\end{proof}

The Lipschitz continuity of $u_h$ on $\tir{\Omega}$ holds for certain discretizations of the second boundary value problem for the Monge-Amp\`ere equation. %For the Dirichlet problem, a uniform bound on the Monge-Amp\`ere masses assures that $\partial_h u_h(x), x \in \Omega_h$ is bounded when it has a non zero Lebesgue measure. In the continuous case, imposing a Dirichlet boundary condition on $u$ implies a constraint on the image of $\Omega$ by the subdifferential of $u$. For the discrete version, it is not clear how to remove the assumption that $\partial_h u_h(\Omega_h)$ is bounded in the following lemma. 
For the Dirichlet problem, the following lemma gives conditions under which $u_h$ is Lipschitz continuous on $\tir{\Omega}$. The condition that $\chi_{\Gamma(u_h)}(\mathcal{N}_h)$ is uniformly bounded may not be necessary since Dirichlet boundary values are prescribed. With Lemma \ref{Lip-lem} below, one obtains a subsequence $u_{h_k}$ which converges uniformly on $\tir{\Omega}$.

\begin{lemma} \label{Lip-lem} Let $u_h$ be a family of discrete convex functions such that $\chi_{\Gamma(u_h)}(\mathcal{N}_h)$ is uniformly bounded. Assume that $u_h=\Gamma(u_h)$ on $\partial \Omega_h$.
There exists a constant $C$ independent of $h$ such that for all $x, y \in \mathcal{N}_h$, we have
$$
|u_h(x) - u_h(y)| \leq C ||y-x||,
$$ 
with $C$ independent of $h$.
\end{lemma}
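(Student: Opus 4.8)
The plan is to distill from the hypotheses a single Lipschitz constant governing the convex envelope, and then to compare $u_h$ with $\Gamma(u_h)$ node by node, splitting $\mathcal{N}_h = \Omega_h \cup \partial \Omega_h$ into the three cases according to where $x$ and $y$ sit. First I would record that, since $\chi_{\Gamma(u_h)}(\mathcal{N}_h)$ is uniformly bounded, Theorem \ref{geo-proof} gives $\chi_{\Gamma(u_h)}(\R^d) \subset \Conv(\chi_{\Gamma(u_h)}(\mathcal{N}_h))$, so the subdifferentials of $\widetilde{\Gamma}(u_h)$ lie in a fixed ball; exactly as in the proof of Theorem \ref{mass-compact04} (via \cite[Lemma 1.1.6]{Guti'errez2001}) this produces a constant $C_1$, independent of $h$, with $|\Gamma(u_h)(a) - \Gamma(u_h)(b)| \le C_1 \|a-b\|$ for all $a,b \in \Conv(\mathcal{N}_h)$. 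Note that no bound on $|u_h|$ is needed for this, which matches the hypotheses of the lemma.

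The core of the argument is one dimensional. For a grid line $L$ through a point of $\Omega_h$ in a direction $e \in V$, with boundary exits $z_-, z_+ \in \partial \Omega_h$, Lemma \ref{univariate} says the piecewise linear interpolant $U$ of $u_h$ on $L$ is convex, so its slopes increase monotonically and the extreme ones occur on the two segments meeting $z_\pm$. On the rightmost segment the Euclidean slope equals $(u_h(z_+) - u_h(x'))/\|z_+ - x'\|$, where $x'$ is the last interior node; using $u_h(z_+) = \Gamma(u_h)(z_+)$ from the boundary hypothesis together with $u_h(x') \ge \Gamma(u_h)(x')$ from \eqref{g-u}, this slope is at most the corresponding slope of $\Gamma(u_h)$, hence $\le C_1$, and symmetrically the leftmost slope is $\ge -C_1$. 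By convexity every slope then lies in $[-C_1,C_1]$, so $u_h$ restricted to the nodes of $L$ is $C_1$-Lipschitz. This disposes of two cases at once: if $x,y \in \Omega_h$ then $(y-x)/h \in \mathbb{Z}^d$, its primitive direction lies in $V$, and convexity of $\Omega$ keeps $[x,y]$ together with all intermediate lattice points inside $\Omega$, so $x,y$ lie on a single grid line and the bound follows; if $x,y \in \partial \Omega_h$ then $u_h = \Gamma(u_h)$ there and the bound is immediate from the Lipschitz estimate for $\Gamma(u_h)$.

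The remaining case, $y \in \Omega_h$ and $x \in \partial \Omega_h$, is where I expect the real work. One direction is painless: $u_h(x) - u_h(y) = \Gamma(u_h)(x) - u_h(y) \le \Gamma(u_h)(x) - \Gamma(u_h)(y) \le C_1\|x-y\|$ by \eqref{g-u}. For the reverse inequality I would write $u_h(y) - u_h(x) = (u_h(y) - \Gamma(u_h)(y)) + (\Gamma(u_h)(y) - \Gamma(u_h)(x))$, the second term being $\le C_1\|x-y\|$, and then control the nonnegative gap $u_h(y) - \Gamma(u_h)(y)$ by the distance of $y$ to $\partial \Omega$. I would run the one-dimensional argument along a coordinate line $r_i \in V$ through $y$, on which both $U$ and the restriction of $\Gamma(u_h)$ are $C_1$-Lipschitz and agree at the exit node $z \in \partial\Omega_h$; this gives $u_h(y) - \Gamma(u_h)(y) = U(y) - \Gamma(u_h)(y) \le 2C_1\|y - z\|$. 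Choosing the axis $r_i$ most aligned with the inward normal $n$ at the nearest boundary point of $y$ (so that $|n \cdot r_i| \ge 1/\sqrt{d}$) and invoking the supporting hyperplane of the convex set $\Omega$ bounds this exit distance by $\sqrt{d}\,d(y,\partial\Omega)$, whence the gap is $\le 2\sqrt{d}\,C_1\,d(y,\partial\Omega) \le 2\sqrt{d}\,C_1\|x-y\|$. Collecting the three cases yields the claim with $C = (2\sqrt{d}+1)C_1$.

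The delicate point is precisely this gap estimate. The difficulty is to route out of $y$ along an admissible grid direction while keeping the boundary exit within a multiple of $d(y,\partial\Omega)$ that depends only on the dimension, so that the constant stays independent of $h$ and of the particular node $y$; a naive choice of direction would make the detour blow up or introduce an additive $O(h)$ error near the boundary. The convexity of $\Omega$, exploited through the supporting hyperplane at the nearest boundary point together with the fact that the coordinate directions belong to $V$, is exactly what secures a uniform bound.
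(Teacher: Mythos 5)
Your proposal is correct in substance, and its engine is the same as the paper's: the chain of inequalities you set up on a grid line --- extreme slopes on the two segments meeting the boundary exits controlled by the Lipschitz constant of $\Gamma(u_h)$ (via $u_h=\Gamma(u_h)$ on $\partial\Omega_h$ and \eqref{g-u}), then monotonicity of slopes from discrete convexity propagating the bound to all intermediate slopes --- is exactly the computation \eqref{p4} in the paper's proof. Your observation that only the gradient bound of Theorem \ref{geo-proof} (and not a bound on $|u_h|$) is needed to get the uniform Lipschitz constant of $\widetilde{\Gamma}(u_h)$ is accurate, and in fact matches the lemma's hypotheses more closely than the paper's citation of Theorem \ref{mass-compact02}. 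Where you genuinely diverge is in assembling the cases. For two interior nodes the paper writes $y-x=\sum_i m_i h r_i$ and walks along canonical coordinate directions, whereas you travel along the single line through $x$ and $y$; for the mixed case the paper completes the exit direction $e$ from \eqref{boundary-domain} to a basis of $\mathbb{Z}^d$ and decomposes again, whereas you prove the estimate $0\le u_h(y)-\Gamma(u_h)(y)\le 2\sqrt{d}\,C_1\,d(y,\partial\Omega)$ by exiting along the coordinate axis best aligned with the supporting hyperplane's normal at the nearest boundary point. Your mixed-case argument is the cleaner of the two: it avoids the paper's basis-completion step (where uniformity of the comparison between $h\sum_i|m_i|$ and $\|y-x\|$ over the possible bases needs care), and it never requires corner points of an L-shaped lattice path to remain in $\Omega_h$, an issue the paper's coordinate decomposition leaves unaddressed for general convex $\Omega$.

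The one point you must shore up is the interior case. Your argument needs the primitive direction $e$ of $y-x$ to belong to $V$, since Lemma \ref{univariate}, equivalently discrete convexity, is only available along directions in $V$. At the point where Lemma \ref{Lip-lem} is stated, $V$ is still a general admissible set containing the canonical basis; the paper fixes $V=\mathbb{Z}^d$ only just afterwards, before Lemma \ref{unif-disc-conv}, and its proof deliberately uses only directions guaranteed to lie in $V$ (the $r_i$, and the exit direction $e$ supplied by \eqref{boundary-domain}). So either state explicitly that you take $V=\mathbb{Z}^d$ --- the paper's eventual choice, and the setting in which the lemma is applied --- in which case your proof is complete and arguably tighter than the paper's, or replace your interior case by the paper's coordinate decomposition. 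Note that your gap estimate cannot rescue the interior case by itself: for two deep interior nodes, $d(y,\partial\Omega)$ is not controlled by $\|y-x\|$, which is precisely why that estimate is reserved for the mixed case.
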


\begin{proof} A related proof can be found in \cite[Proposition 4.3]{benamou2017minimal}. Recall first from Theorem \ref{mass-compact02} that 
$\widetilde{\Gamma}(u_h)$ is Lipschitz continuous on $\tir{\Omega}$
We first prove that the Lipschitz continuity property holds for $x, y \in \Omega_h$.

Let $x \in \Omega_h$ and $e \in V$ such that $x \pm h e \in \Omega_h$. Since $\Delta_{e} v_h(x) \geq 0$, we have
$$
v_h(x+h e) - v_h(x) \geq v_h(x) - v_h(x-h e). 
$$
Therefore for integers $k$ and $l$ such that $k \geq l$, $x+k h e$ and $x+l he$  are in $\Omega_h$
\begin{equation} \label{l1}
v_h(x+k he) - v_h(x+ (k-1) h e) \geq v_h(x+ (l+1) he) - v_h(x+ l h e). 
\end{equation}
Let $k$ be the maximum integer such that $x+(k+1) h e \notin \Omega_h$ and assume that $l$ is the smallest integer such that $x+(l-1) h e \notin \Omega_h$. Then both $x+ k h e + h^e_{x+ k h e} e$ and $x+ l h e - h^{-e}_{x+ l h e} e$ are on $\partial \Omega_h$. By the Lipschitz continuity of $\widetilde{\Gamma}(u_h)$ on $\tir{\Omega}$, the assumption that $u_h=\Gamma(u_h)$ on $\partial \Omega_h$,  \eqref{g-u}, $\Delta_e u_h(x+ k h e) \geq 0$, \eqref{l1} and $\Delta_e u_h(x+ l h e) \geq 0$, we have
\begin{multline} \label{p4}
C ||e|| \geq \frac{\Gamma(u_h)( x+ k h e + h^e_{x+ k h e} e) - \Gamma(u_h)(x+ k h e)}{h^e_{x+ k h e}}  = \\
\frac{u_h( x+ k h e + h^e_{x+ k h e} e) - \Gamma(u_h)(x+ k h e)}{h^e_{x+ k h e}} \geq \\ \frac{u_h( x+ k h e + h^e_{x+ k h e} e) - u_h(x+ k h e)}{h^e_{x+ k h e}} \geq 
\frac{u_h(x+k he) - u_h(x+ (k-1) h e)}{h}  \geq \\ \frac{u_h(x+ (l+1) he) - u_h(x+ l h e)}{h} \geq 
\frac{u_h(x+ l h e) - \Gamma(u_h)( x+ l h e - h^{-e}_{x+ l h e} e) }{h^{-e}_{x+ l h e}} \geq \\
 \frac{\Gamma(u_h)(x+ l h e) - \Gamma(u_h)( x+ l h e - h^{-e}_{x+ l h e} e) }{h^{-e}_{x+ l h e}} \geq - C ||e||.
\end{multline}
We conclude that for an integer $m$ such that $x+m h e \in \Omega_h$, we have $|u_h(x+m h e ) - u_h(x)| \leq C |m| h ||e||$. 
Put $y=x+\sum_{i=1}^d m_i h r_i$ where we recall that $(r_1,\ldots,r_d)$ denotes the canonical basis of $\R^d$ and each $r_i$ is in $V$ by assumption. We obtain for $x, y \in \Omega_h$,
\begin{equation*} %\label{p2}
| u_h(y) - u_h(x)| \leq C h  \sum_{i=1}^d |m_i| = C ||y-x||_1 \leq C ||y-x||,
\end{equation*}
where $||.||_1$ denotes the 1-norm. 

Next, again by the assumption that $u_h=\Gamma(u_h)$ on $\partial \Omega_h$ and the Lipschitz continuity $\widetilde{\Gamma}(u_h)$ on $\tir{\Omega}$, for $x, y \in \partial \Omega_h$
\begin{equation*} %\label{p3}
|u_h(y) - u_h(x)| = |\Gamma(u_h)(y) - \Gamma(u_h)(x)| \leq C ||y-x||.
\end{equation*}
Finally, assume that $x \in \Omega_h$ and $y \in \partial \Omega_h$. Let $z \in \Omega_h$ such that $y=z+h^e_z e$ for some $e \in V$. Arguing as for \eqref{p4}, we obtain $|u_h(y) - u_h(z)| \leq C h^e_z ||e||$. Put $e=e_1$ and let $\{e_1,\ldots,e_d \, \}$ be a basis of $\mathbb{Z}^d$. We can write $z=x+\sum_{i=1}^d m_i h e_i$ and so
$y= m_1 h e_1 + h^e_z e_1 + \sum_{i=2}^d m_i h e_i$. Arguing as above, we obtain $|u_h(y) - u_h(x)| \leq C ||y-x||$ in this case as well. This completes the proof. \Qed

\end{proof}

%We will need the following definition.

%\begin{definition} 
%We say that a family of Borel measures $\mu_h$ converges to a Borel measure $\mu$ if for any sequence $h_k \to 0$, $\mu_{h_k}$ weakly converges to $\mu$.
%\end{definition}

Next, we state a result which is implicit in convergence studies of the Monge-Amp\`ere equation. As indicated in the preliminaries, the set of directions is now taken to be $V=\mathbb{Z}^d$ and we focus on the limit $h \to 0$. %The proof of the next lemma is given at the end of this section. 

\begin{lemma} \label{unif-disc-conv}
Take $V=\mathbb{Z}^d\setminus \{\, 0 \, \}$ and let $u_{h}$ be discrete convex. If $u_h$ converges uniformly on compact subsets of $\Omega$ to a function $u \in C(\Omega)$, $u$ is convex on $\Omega$. %If the convergence is uniform on $\tir{\Omega}$ and  $u \in C(\tir{\Omega})$, then $u$ is convex on $\tir{\Omega}$.
\end{lemma}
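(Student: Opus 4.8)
The plan is to prove that $u$ is midpoint convex and then invoke the classical fact that a continuous midpoint-convex function is convex; hence it suffices to show that $u\big(\tfrac{a+b}{2}\big)\le \tfrac12\big(u(a)+u(b)\big)$ for every $a,b\in\Omega$.

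The starting observation is that the discrete convexity condition of Definition \ref{discrete-convex}, specialized to nodes whose relevant neighbours are again nodes, is exactly a discrete midpoint inequality. Indeed, if $x\in\Omega_h$ and $e\in\mathbb{Z}^d\setminus\{0\}$ are such that $x\pm he\in\Omega_h$, then $x\pm he\in\tir{\Omega}$ forces $h^e_x=h^{-e}_x=h$, so that by \eqref{Delta-e} the inequality $\Delta_e u_h(x)\ge 0$ reads
$$
u_h(x)\le \tfrac12\big(u_h(x+he)+u_h(x-he)\big).
$$
Equivalently, for any two nodes $p=hm$ and $q=hn$ in $\Omega_h$, with $m,n\in\mathbb{Z}^d$ and $m-n$ even componentwise, the choice $e=\tfrac{p-q}{2h}\in\mathbb{Z}^d$ is admissible (here we use $V=\mathbb{Z}^d\setminus\{0\}$), the midpoint $x=\tfrac{p+q}{2}$ lies in $\mathbb{Z}^d_h$ because $m+n$ is then also even, and $x\in\Omega$ by convexity of $\Omega$; thus $x\in\Omega_h$ and $u_h\big(\tfrac{p+q}{2}\big)\le\tfrac12\big(u_h(p)+u_h(q)\big)$.

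Next I would transfer this inequality to the limit. Fix $a,b\in\Omega$ and a compact set $K\subset\Omega$ whose interior contains $a$, $b$ and $\tfrac{a+b}{2}$. For each small $h$ I would select nodes $p_h,q_h\in\Omega_h$ with $p_h\to a$ and $q_h\to b$, obtained by rounding $a/h$ and $b/h$ to the nearest lattice points and then adjusting one of them by a single step so that the parity condition $m_h-n_h\in 2\mathbb{Z}^d$ holds; since $a,b$ are interior points of $\Omega$, such an adjustment keeps the points in $\Omega_h$ for $h$ small and does not affect the convergence. The midpoints $x_h=\tfrac{p_h+q_h}{2}\in\Omega_h$ then converge to $\tfrac{a+b}{2}$, and the discrete midpoint inequality gives $u_h(x_h)\le\tfrac12\big(u_h(p_h)+u_h(q_h)\big)$. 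Using the uniform convergence of $u_h$ to $u$ on $K$ together with the continuity of $u$, one has $u_h(p_h)\to u(a)$, $u_h(q_h)\to u(b)$ and $u_h(x_h)\to u\big(\tfrac{a+b}{2}\big)$; passing to the limit yields the midpoint inequality for $u$, and continuity upgrades it to convexity.

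The main obstacle is bookkeeping rather than a deep estimate: one must arrange the approximating nodes $p_h,q_h$ to satisfy the parity constraint so that their common midpoint is itself a node of $\Omega_h$ (this is precisely where $V=\mathbb{Z}^d\setminus\{0\}$ enters), while simultaneously keeping them inside $\Omega_h$ and preserving $p_h\to a$, $q_h\to b$. Once this is set up, the passage to the limit is immediate from uniform convergence on compact subsets and continuity of $u$, and the final step from midpoint convexity to full convexity is the standard continuity argument.
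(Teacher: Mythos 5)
Your proof is correct, but it follows a genuinely different route from the paper. The paper reduces the statement to one-dimensional convexity: it invokes Lemma \ref{univariate} (the univariate piecewise linear interpolant of $u_h$ along a lattice line is convex, proved via a chain of difference-quotient inequalities), approximates an arbitrary segment direction $\zeta=y-x$ by lattice directions $\zeta_h\in\mathbb{Z}^d_h$, places points $a_h,b_h,c_h$ on the approximating lattice lines near the relevant convex combinations, and passes to the limit for a general combination parameter $\theta$. You instead exploit the parity trick: for nodes $p=hm$, $q=hn$ with $m-n$ even componentwise, the midpoint is again a node and the condition $\Delta_e u_h\ge 0$ with $e=(p-q)/(2h)$ and equal step lengths $h^e_x=h^{-e}_x=h$ is exactly the discrete midpoint inequality; after passing to the limit you conclude via the classical fact that a continuous midpoint-convex function is convex. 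Your argument is more elementary and self-contained, since it bypasses Lemma \ref{univariate} entirely; the paper's route costs nothing extra in context because Lemma \ref{univariate} is needed anyway for the Lipschitz estimates in Theorem \ref{loc-u}, and it yields the convexity inequality for arbitrary $\theta$ directly rather than through the midpoint-to-general upgrade. Both proofs use the hypothesis $V=\mathbb{Z}^d\setminus\{0\}$ in an essential way: the paper to approximate an arbitrary direction, you to select the direction $(p-q)/(2h)$. One small imprecision: fixing the parity of $m_h-n_h$ may require adjusting up to $d$ coordinates (one lattice step per odd component), not ``a single step''; this is still only an $O(h)$ perturbation, so your limiting argument is unaffected.
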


\begin{proof} 

Let $e \in \mathbb{Z}^d$ and $x_0 \in \Omega_h$. We recall from Lemma \ref{univariate} that the univariate piecewise linear interpolant $U$ of $u_h$ on the line $L$ through $x_0$ and direction $e$ is convex. Fix $x, y \in \Omega$ and let for $\theta \in [0,1]$, $u_{x,y}(\theta) = u(\theta y + (1-\theta) x)$. The convexity of $u_{x,y}$ would imply that 
$$
u(\theta y + (1-\theta) x) = u_{x,y}(\theta) \leq \theta u_{x,y}(1) + (1-\theta) u_{x,y}(0) =  \theta u(y) + (1-\theta) u(x).
$$
It is therefore enough to show that $u_{x,y}$ is convex. Put $\zeta=y-x$. We have $u_{x,y}(\theta) =u(x+\theta\zeta )$. Let $x_h \in \Omega_h$ such that $x_h \to x$ and choose $\zeta_h \in \mathbb{Z}^d_h$ such that $\zeta_h \to \zeta$. Denote by $L_h$ the line through $x_h$ and direction $\zeta_h$.  

Given $\alpha, \beta, \theta \in (0,1)$, let $a_h$ and $b_h$ be points on $L_h \cap \mathcal{N}_h$ of minimum distance to $x_h+\alpha \zeta_h$ and $x_h + \beta \zeta_h$ respectively. As $h\to 0$, $a_h \to x+\alpha \zeta$ and $b_h \to x+\beta \zeta$. Now, let 
$c_h \in L_h \cap \mathcal{N}_h$ of minimum distance to $x_h+ \theta \alpha \zeta_h+ (1-\theta) \beta \zeta_h$. As $h \to 0$,
$c_h \to x+ \theta \alpha \zeta+ (1-\theta) \beta \zeta$. 

Recall that the piecewise linear interpolant of $u_h$ on the line $L_h$ is convex. That is, for $\theta_h \in [0,1]$ such that $c_h=\theta_h a_h +  (1-\theta_h) b_h$ we have
$$
u_h(c_h) \leq \theta_h u_h(a_h) +  (1-\theta_h) u_h(b_h). 
$$
Up to a subsequence $\theta_h \to \theta' \in [0,1]$. By the uniform convergence of $u_h$ to $u$ and the continuity of $u$, we obtain
$$
u(x+ \theta \alpha \zeta+ (1-\theta) \beta \zeta) \leq \theta' u(x+ \alpha \zeta) +  (1-\theta') u(x+ \beta \zeta).
$$
Since $c_h=\theta_h a_h +  (1-\theta_h) b_h$ we get $x+ \theta \alpha \zeta+ (1-\theta) \beta \zeta) = \theta'(x+\alpha\zeta ) + (1-\theta')(x+\beta\zeta )$ which gives $\theta=\theta'$ when $\alpha \neq \beta$ and $\zeta\neq 0$. The cases $\alpha=\beta$ and $\zeta = 0$ are trivial.

We have shown that $u_{xy}$ is convex. This implies that $u$ is convex on $\Omega$. %Since $u \in C(\tir{\Omega})$, $u$ is convex on $\tir{\Omega}$.

\Qed
\end{proof}

We now prove that if in addition to the assumptions of Theorem \ref{loc-u}, $u_h = g$ on $\partial \Omega_h$ for a convex function $g \in C(\tir{\Omega})$ and $\omega(1,u_h,\Omega_h)$ is uniformly bounded, there are subsequences $\Gamma(u_{h_k})$ and $u_{h_k}$ converging uniformly on compact subsets of $\Omega$ to the same convex function $v \in C(\Omega)$. Moreover $v=g$ on $\partial \Omega$. %Moreover $v \in C(\tir{\Omega})$. 
%We will need the uniqueness up to a constant of the solution to the second boundary value problem for the Monge-Amp\`ere equation. For convenience, since an analytical proof which does not use optimal transport is not easy to find, we give a proof, suggested by \cite{Wang2004}.
%\begin{theorem}\end{theorem}\begin{proof}\Qed\end{proof}
Recall the discrete Laplacian
$$
\Delta_h v_h(x) =  \sum_{i=1}^d  \Delta_{r_i} v_h(x).
$$

For $x \in \tir{\Omega}$ define
\begin{equation} \label{bd-data}
U(x) = \sup \{ \, L(x), L \leq g \text{ on } \partial \Omega, L \text{ affine} \, \},
\end{equation}
the convex envelope with boundary data $g$. We have by \cite[Theorem 5.2]{Hartenstine2006}

\begin{theorem} \label{L-bd}
For $g \in C(\tir{\Omega})$, the function $U$ defined by
\eqref{bd-data} is in $C(\tir{\Omega})$ and $U=g$ on $ \partial \Omega$.
\end{theorem}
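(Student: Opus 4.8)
The plan is to establish three things in turn: that $U$ is a well-defined proper bounded convex function (hence continuous inside $\Omega$ by Lemma \ref{continuity-convex}), that $U\le g$ on $\partial\Omega$ trivially, and that $U\ge g$ on $\partial\Omega$ together with continuity up to the boundary. The first two are elementary; the last is the heart of the matter and is carried out by a barrier argument. First I would record the bounds. Since $g\in C(\tir{\Omega})$ is bounded, say $m\le g\le M$, the constant $m$ is an admissible competitor in \eqref{bd-data}, so $U\ge m$; and any affine $L\le g\le M$ on $\partial\Omega$ satisfies $L\le M$ on $\Conv(\partial\Omega)=\tir{\Omega}$ (for a bounded convex domain $\tir{\Omega}$ is the convex hull of its boundary), so $U\le M$. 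Thus $U$ is a proper convex function, being a supremum of affine functions, bounded on $\tir{\Omega}$, hence continuous on $\Omega$. For $x_0\in\partial\Omega$ every competitor obeys $L(x_0)\le g(x_0)$, giving at once $U(x_0)\le g(x_0)$.

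The main step is the reverse inequality on $\partial\Omega$, via a supporting affine barrier. Fix $x_0\in\partial\Omega$ and $\epsilon>0$. By continuity of $g$ at $x_0$ choose $\delta>0$ with $g(x)\ge g(x_0)-\epsilon$ for $x\in\partial\Omega$, $\|x-x_0\|<\delta$. Let $\nu$ be the inner normal of a supporting hyperplane of $\tir{\Omega}$ at $x_0$, normalized so that $\nu\cdot(x-x_0)\le 0$ on $\tir{\Omega}$, and set
\[
L(x) = g(x_0) - \epsilon + A\, \nu\cdot(x-x_0).
\]
For $x\in\partial\Omega$ with $\|x-x_0\|<\delta$ we have $A\,\nu\cdot(x-x_0)\le 0$, so $L(x)\le g(x_0)-\epsilon\le g(x)$ for every $A>0$. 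For $x$ at distance $\ge\delta$ the quantitative input is that $\nu\cdot(x-x_0)\le -c_\delta<0$; then $L(x)\le g(x_0)-\epsilon-Ac_\delta\to-\infty$, so $L(x)\le m\le g(x)$ once $A$ is large. Hence $L$ is admissible and $U(x_0)\ge L(x_0)=g(x_0)-\epsilon$; letting $\epsilon\to0$ and combining with $U(x_0)\le g(x_0)$ yields $U=g$ on $\partial\Omega$.

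For continuity up to the boundary, lower semicontinuity of $U$ is automatic as a supremum of affine functions, and the lower barrier already gives $\liminf_{x\to x_0}U(x)\ge g(x_0)-\epsilon$. For the matching upper bound I would use a dual barrier: if $\Lambda$ is affine with $\Lambda\ge g$ on $\partial\Omega$, then every competitor satisfies $L\le g\le\Lambda$ on $\partial\Omega$, whence $L\le\Lambda$ on $\Conv(\partial\Omega)=\tir{\Omega}$ and therefore $U\le\Lambda$ on $\tir{\Omega}$. Taking
\[
\Lambda(x) = g(x_0) + \epsilon - A\,\nu\cdot(x-x_0)
\]
with $A$ large (again using $\nu\cdot(x-x_0)\le -c_\delta$ away from $x_0$) gives $\Lambda\ge g$ on $\partial\Omega$ with $\Lambda(x_0)=g(x_0)+\epsilon$, so $\limsup_{x\to x_0}U(x)\le g(x_0)+\epsilon$. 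As $\epsilon$ is arbitrary, $U(x)\to g(x_0)=U(x_0)$, so $U$ is continuous at each boundary point and hence $U\in C(\tir{\Omega})$.

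The step I expect to be the main obstacle is the quantitative far-field estimate $\nu\cdot(x-x_0)\le -c_\delta<0$ for $\|x-x_0\|\ge\delta$, which is exactly what lets the slope $A$ drive the barrier below (respectively above) $g$ away from $x_0$. This holds automatically when the supporting hyperplane meets $\tir{\Omega}$ only at $x_0$, i.e.\ at strictly convex boundary points; along a flat portion of $\partial\Omega$ a single supporting hyperplane degenerates ($\nu\cdot(x-x_0)$ vanishes on the whole face) and the barrier must be built with considerable care, tilting within the supporting face and matching the modulus of continuity of $g$. This is precisely the delicate barrier analysis borrowed from \cite{Hartenstine2006,del2018convergence}, and is where all the real work of the statement resides.
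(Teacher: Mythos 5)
Your argument is the classical barrier proof for \emph{strictly} convex domains, and the gap you flag in your last paragraph is not a deferrable technicality: it is the entire content of the statement in the setting of this paper. The far-field estimate $\nu\cdot(x-x_0)\le -c_\delta<0$ for $x\in\tir{\Omega}$ with $||x-x_0||\ge\delta$ holds exactly when the supporting hyperplane at $x_0$ meets $\tir{\Omega}$ only at $x_0$, i.e.\ at exposed boundary points. The domains relevant here are general bounded convex domains --- in the discretization context typically squares or polytopes --- where a boundary point usually lies in the relative interior of a flat face and every supporting hyperplane contains that whole face, so no $c_\delta>0$ exists and both your lower barrier $L$ and your upper barrier $\Lambda$ collapse to the constant $g(x_0)\mp\epsilon$ along the face. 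Note also that the paper itself offers no proof to compare against: it quotes the result from \cite[Theorem 5.2]{Hartenstine2006}, whose whole purpose is precisely the non-strictly-convex case your argument does not reach.

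Moreover, the missing case cannot be closed by ``tilting within the supporting face'' or any other construction that uses only the continuity of $g$, because under that hypothesis alone the conclusion is false on non-strictly convex domains. Take $\Omega=(0,1)^2$ and any $g\in C(\tir{\Omega})$ with $g(t,0)=\min(t,1-t)$ on the bottom edge. Every affine $L$ with $L\le g$ on $\partial\Omega$ satisfies $L(0,0)\le 0$ and $L(1,0)\le 0$, hence $L(1/2,0)=\tfrac12\big(L(0,0)+L(1,0)\big)\le 0$, so $U(1/2,0)\le 0<\tfrac12=g(1/2,0)$ and $U\neq g$ on $\partial\Omega$. The cited theorem of \cite{Hartenstine2006} carries a compatibility hypothesis on the boundary data (convexity of $g$ along the line segments of $\partial\Omega$), which is satisfied in every application in this paper because there $g$ is a convex function on $\tir{\Omega}$ (see Theorems \ref{mass-thm} and \ref{main2}); that hypothesis is exactly what the example above violates. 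Since your proof never invokes convexity of $g$ or any compatibility condition, it cannot be completed along the lines you propose: a correct proof must use that hypothesis face by face --- e.g.\ supporting $g$ restricted to a face within the affine hull of the face (itself nontrivial, since a convex $g$ can have empty subdifferential at boundary points) and then adding a large multiple of a supporting hyperplane of $\Omega$ --- which is the delicate analysis actually carried out in \cite{Hartenstine2006}.
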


\begin{theorem} \label{main2}
Let $u_h$ be a family of discrete convex functions which is uniformly bounded and with Monge-Amp\`ere masses uniformly bounded. Assume that $u_h = g$ on $\partial \Omega_h$ for a convex function $g \in C(\tir{\Omega})$. There is a subsequence $h_k$ such that $\Gamma(u_{h_k})$ and $u_{h_k}$ converge uniformly on compact subsets of $\Omega$ to a convex function $v \in C(\Omega)$. Moreover $v=g$ on $\partial \Omega$. %Moreover $v \in C(\tir{\Omega})$. 

\end{theorem}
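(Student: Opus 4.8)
The plan is to reduce the statement to the boundary behaviour of the two limits and then to control that boundary behaviour by two matching barriers. First I would apply Theorem~\ref{loc-u}: since $u_h$ is discrete convex and uniformly bounded, there is a subsequence $h_k$ along which $u_{h_k}$ and $\Gamma(u_{h_k})$ converge uniformly on compact subsets of $\Omega$ to functions $w$ and $v$ that are convex (Lemma~\ref{unif-disc-conv}) and continuous on $\Omega$; passing to the limit in \eqref{g-u} gives $v \le w$. The crucial reduction is that once I know $u_{h_k} \to w$ \emph{uniformly on $\tir{\Omega}$} in the sense of Definition~\ref{def-cv-mesh}, with $w$ convex, Lemma~\ref{cvg-cvx} forces $\Gamma(u_{h_k}) \to w$ uniformly on compact subsets and hence $v = w$. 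So the whole theorem follows once I produce uniform boundary-layer control on $u_h$ showing that $w$ extends continuously to $\tir{\Omega}$ with $w = g$ on $\partial\Omega$.

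For the lower bound I would use the discrete Aleksandrov--Bakelman--Pucci estimate. Extend $g$ to a convex function on $\R^d$ and, for $z_0 \in \partial\Omega$, choose a supporting hyperplane $L_0$ with $L_0 \le g$ on $\tir{\Omega}$ and $L_0(z_0) = g(z_0)$. The mesh function $u_h - L_0$ is again discrete convex, satisfies $u_h - L_0 = g - L_0 \ge 0$ on $\partial\Omega_h$, and has the same Monge-Amp\`ere mass as $u_h$ (subtracting an affine function only translates the discrete normal mapping). Lemma~\ref{d-stab} then gives $u_h(x) \ge L_0(x) - C\big[\diam(\Omega)^{d-1} d(x,\partial\Omega)\,\omega(1,u_h,\Omega_h)\big]^{1/d}$, and since the masses are uniformly bounded this reads $u_h(x) \ge L_0(x) - C' d(x,\partial\Omega)^{1/d}$ with $C'$ independent of $h$. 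Letting $x \to z_0$ the error term vanishes while $L_0(x) \to g(z_0)$, so $\liminf u_h(x) \ge g(z_0)$, uniformly as $x$ approaches $\partial\Omega$.

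For the upper bound I would exploit that discrete convexity yields $\Delta_h u_h = \sum_{i=1}^d \Delta_{r_i} u_h \ge 0$, i.e.\ $u_h$ is a discrete subsolution of the Laplacian. Comparing with the discrete harmonic extension $s_h$ of $g$ (solving $\Delta_h s_h = 0$ in $\Omega_h$, $s_h = g$ on $\partial\Omega_h$) through the discrete maximum principle gives $u_h \le s_h$ on $\Omega_h$. The convergence of this scheme to the solution $s$ of the Dirichlet--Laplace problem, with $s \in C(\tir{\Omega})$ and $s = g$ on $\partial\Omega$, is exactly the result of \cite{del2018convergence} reviewed in the appendix, so $\limsup u_h(x) \le g(z_0)$ as $x \to z_0$. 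Combined with the previous paragraph this pins down the boundary layer: for each $\epsilon>0$ there is a neighbourhood of $\partial\Omega$ on which $|u_{h_k} - g| < \epsilon$ uniformly in $k$, and together with the local uniform convergence on the complementary compact set this upgrades to $u_{h_k} \to w$ uniformly on $\tir{\Omega}$, with $w \in C(\tir{\Omega})$ convex and $w = g$ on $\partial\Omega$. Lemma~\ref{cvg-cvx} then gives $v = w$, which finishes the proof.

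The main obstacle is the upper barrier. Since $\Omega$ is only assumed convex, its boundary is merely Lipschitz and may carry corners, so the delicate point is that the discrete Dirichlet--Laplace scheme built on the boundary-adapted quantities $\Delta_{r_i}$ (whose stencils use the cut lengths $h^e_x$ near $\partial\Omega$) genuinely attains the boundary data in the limit; equivalently, that every boundary point is regular and admits a discrete barrier compatible with the cut cells. This is precisely the construction of \cite{del2018convergence}. A secondary technical difficulty is to make both barrier estimates uniform in $h$ and in the boundary point $z_0$ — in particular to keep the slopes of the supporting hyperplanes $L_0$ controlled — since this uniformity is what turns the pointwise boundary limits into the genuine uniform convergence on $\tir{\Omega}$ required to invoke Lemma~\ref{cvg-cvx}.
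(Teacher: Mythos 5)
Your overall architecture --- a discrete ABP lower barrier plus comparison with the discrete harmonic extension of $g$ and the convergence result of \cite{del2018convergence} --- is exactly the paper's, and your upper barrier is carried out the same way the paper does it. But your lower barrier has a genuine gap. You ``extend $g$ to a convex function on $\R^d$'' and then ``choose a supporting hyperplane $L_0$ with $L_0 \le g$ on $\tir{\Omega}$ and $L_0(z_0)=g(z_0)$'' at a boundary point $z_0$. Neither object need exist: a convex function continuous on $\tir{\Omega}$ can have unbounded slope at the boundary, e.g.\ $g(x)=-\sqrt{1-\|x\|^2}$ on the closed unit ball, for which the subdifferential is empty at every point of $\partial\Omega$ and no convex extension to $\R^d$ exists. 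The paper avoids this by invoking Theorem \ref{L-bd} (Hartenstine): for every $\epsilon>0$ there is an affine $L$ with $L \le g$ \emph{merely on} $\partial\Omega$ and $L(\zeta)\ge g(\zeta)-\epsilon$. Requiring $L\le g$ only on the boundary is what makes existence automatic, the $\epsilon$-loss is harmless since one lets $\epsilon\to 0$ at the end, and $L \le g$ on $\partial\Omega$ is all the ABP step needs, since $u_h - L \ge 0$ is only required on $\partial\Omega_h$ before applying Lemma \ref{d-stab}.

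The same phenomenon undermines the fix you propose for the uniformity problem in your last paragraph: the slopes of the affine minorants cannot be ``kept controlled'' in general (they blow up in the example above), so that route is closed. If you want your ending --- upgrading to uniform convergence on $\tir{\Omega}$ in the sense of Definition \ref{def-cv-mesh} so as to invoke Lemma \ref{cvg-cvx} --- the uniform lower barrier should come instead from taking the supremum over all admissible affine functions \emph{before} passing to the limit: the ABP bound holds for each such $L$ with the same constant, giving $u_h \ge U - C\,d(\cdot,\partial\Omega)^{1/d}$ on $\Omega_h$, where $U$ is Hartenstine's envelope \eqref{bd-data}, which lies in $C(\tir{\Omega})$ and equals $g$ on $\partial\Omega$; the uniformity in the boundary point is then a consequence of the uniform continuity of $U$, not of any slope bound, and the matching uniform upper bound is Lemma \ref{bd-lem}. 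It is worth noting that the paper sidesteps this uniformity question entirely: it proves only pointwise attainment of the boundary values by the limit, runs the same barrier argument a second time on $\Gamma(u_h)$ viewed as a mesh function (legitimate by Lemma \ref{cv-bd0} and Theorem \ref{equi-measures}, which give $\Gamma(u_h)=g$ on $\partial\Omega_h$ and equality of the Monge-Amp\`ere masses), and then identifies the two limits through the uniqueness of the Dirichlet problem for the Monge-Amp\`ere equation \cite[Theorem 1.1]{Hartenstine2006} rather than through Lemma \ref{cvg-cvx}. So your ending is a viable alternative to the paper's, but only after the lower barrier and its uniformity are repaired as above.
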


\begin{proof}
We recall that by Lemma \ref{cv-bd0}, since $u_h = g$ on $\partial \Omega_h$ for a convex function $g \in C(\tir{\Omega})$ which can then be extended to $\R^d$ as a convex function, we have $\Gamma(u_h)=u_h=g$ on $\partial \Omega_h$. 
By Theorem \ref{loc-u} there is a subsequence $h_k$ such that $\Gamma(u_{h_k})$ and $u_{h_k}$ converge uniformly on compact subsets of $\Omega$ to continuous convex functions $u$ and $v$ respectively. We show that %$v$ has a continuous extension to $\tir{\Omega}$, which we also denote by $v$ by an abuse of notation, and such that 
$u=v=g$ on $\partial \Omega$.

%To prove that $v$ is continuous up to the boundary, 
We first prove that for $\zeta \in \partial \Omega$, $\lim_{x \to \zeta} v(x) \geq g(\zeta)$ by arguing as in the proof of \cite[Lemma 5.1]{Hartenstine2006}. Let $\epsilon >0$. By Theorem \ref{L-bd} there exists an affine function $L$ such that $L \leq g$ on $\partial \Omega$ and $L(\zeta) \geq g(\zeta) - \epsilon$. Put $z_h=u_h-L$. Since $u_h=g$ on $\partial \Omega_h$, we have $z_h \geq 0$ on $\partial \Omega_h$. 

Now let $x \in \Omega$ and $x_h \in \Omega_h$ such that $x_h \to x$. Since $u_h$ converges to $v$ uniformly on compact subsets of $\Omega$, $z_h$ converges to $z$ uniformly on compact subsets of $\Omega$ and thus $z_h(x_h) \to v(x)-L(x) \coloneqq z(x)$. Assume that $z(x) <0$. % and 
%Put $z=v-L$.Since $v=g$ on $\partial \Omega$, we have
% $z \geq 0$ on $\partial \Omega$. If  $z \geq 0$ on $\Omega$ we obtain $\lim_{x \to \zeta} v(x) \geq g(\zeta)$. 
%let $\widetilde{\Omega} \subset \Omega$ be a convex set such that $z<0$ on $\widetilde{\Omega}$. We may assume that $\widetilde{\Omega} \cap \partial \Omega = \emptyset$ by considering a ball centered at $x$ and contained in $\widetilde{\Omega}$. Let us denote by $x_h$, using an abuse of notation, a sequence contained in  $\widetilde{\Omega} \cap \Omega_h$ such that $x_h \to x$. 
By the discrete Aleksandrov's maximum principle Lemma \ref{d-stab}
% \cite[Proposition 6.15]{GutierrezNguyen07} 
 applied to $z_h$ we have
\begin{align*}
\begin{split} %\label{alex-applied}
(-z_h(x_h))^d %&\leq C d(x_h, \partial \widetilde{\Omega}) (\diam(\widetilde{\Omega}))^{d-1}M_h[v_h](\widetilde{\Omega} ) \\
 & \leq C d(x_h, \partial \Omega) (\diam(\Omega))^{d-1}\omega(1,u_h,\Omega_h) \\
 & \leq C d(x_h, \partial \Omega) \omega(1,u_h,\Omega_h) \leq C ||x_h-\zeta|| \omega(1,u_h,\Omega_h).
 \end{split}
\end{align*}
By the assumption on the Monge-Amp\`ere masses, $\omega(1,u_h,\Omega_h) \leq C$ with $C$ independent of $h$. Then
\begin{equation} \label{alex-applied}
(-z_h(x_h))^d \leq C ||x_h-\zeta||.
\end{equation}
Taking the limit as $h_k \to 0$ in \eqref{alex-applied}, we obtain for each $x \in \Omega$ for which $z(x) <0$
\begin{align*}
(-z(x))^d &\leq C ||x-\zeta||.
\end{align*}
In summary
$$
\text{either} \ z(x) \geq - C ||x-\zeta||^{\frac{1}{d}} \, \text{or} \, z(x) \geq 0, x \in \Omega.
$$
We conclude that
$$
v(x) \geq L(x) -C ||x-\zeta||^{\frac{1}{d}} \, \text{on} \, \Omega.
$$
Taking the limit as $x \to \zeta$ we obtain $\lim_{x \to \zeta} v(x) \geq L(\zeta) \geq g(\zeta) - \epsilon$. Since $\epsilon$ is arbitrary, we have $\lim_{x \to \zeta} v(x) \geq g(\zeta)$.

%{\bf Since $v \in C(\tir{\Omega})$ we have $v(\zeta) \geq g(\zeta)$.}

Next, we prove that $\lim_{x \to \zeta} v(x) \leq g(\zeta)$. Let $w_h$ denote the solution of the problem 
\begin{equation}\label{d01h}
\Delta_h w_h=0 \text{ on } \Omega_h \text{ with } w_h=g \text{  on } \partial \Omega_h.
\end{equation}
Since the above problem is linear, for existence and uniqueness of $w_h$, it is enough to prove uniqueness, i.e. the problem 
$\Delta_h s_h=0$ on $\Omega_h$ with $s_h=0$ on $\partial \Omega_h$ has the unique solution $s_h=0$. This follows from the discrete maximum principle for the discrete Laplacian \cite[Theorem 4.77]{Hackbusch2010}. Indeed, as $\Delta_h s_h \geq 0$ on $\Omega_h$ with $s_h=0$ on $\partial \Omega_h$ we obtain $s_h \leq 0$ on $\Omega_h$. Using $\Delta_h s_h \leq 0$, we obtain $s_h \geq 0$ on $\Omega_h$. This shows that $s_h=0$, proving existence and uniqueness of $w_h$.

Since $u_h$ is discrete convex, we have $\Delta_h u_h \geq 0$. Therefore $\Delta_h (u_h -w_h) \geq 0$ on $\Omega_h$ with $u_h-w_h=0$ on $\partial \Omega_h$. Again, by the discrete maximum principle for the discrete Laplacian \cite[Theorem 4.77]{Hackbusch2010}, we have
$u_h-w^{}_h \leq 0$ on $\Omega_h$. 

Since a convex domain is Lipschitz, we can apply the results of \cite[section 6.2 ]{del2018convergence} and claim that 
 $w_h$ converges uniformly on compact subsets to the unique viscosity solution of $\Delta w =0$ on $\Omega$ with $w=g$ on
$\partial \Omega$, c.f. section \ref{appendix} for additional details. This gives $v(x) \leq w(x)$ on $\Omega$. It is also proved in \cite{del2018convergence} that $w \in C(\tir{\Omega})$. We conclude that  $\lim_{x \to \zeta} v(x) \leq g(\zeta)$. Thus
$v \in C(\tir{\Omega})$ and $v=g$ on $\partial \Omega$.

Now, from Theorem \ref{equi-measures}, we have $\omega(1,\Gamma(u_h),\Omega_h) =\omega(1,u_h,\Omega_h) \leq C$. We can then apply the same arguments to $\Gamma(u_h)$ as a mesh function, to obtain $u \in C(\tir{\Omega})$ and $u=g$ on $\partial \Omega$. Again, by Theorem \ref{equi-measures} we have $\omega(1,u,.) = \omega(1,v,.)$. By unicity of the solution of the Dirichlet problem for the Monge-Amp\`ere equation \cite[Theorem 1.1]{Hartenstine2006}, we have $u=v$ on $\tir{\Omega}$.

 \Qed

\end{proof}

%\begin{remark}For the proof of Theorem \ref{main2}, it could be possible to avoid the barrier arguments in the appendix if $\det D^2 u >0$.\end{remark}

\section{Appendix} \label{appendix}

In \cite{del2018convergence} convergence of numerical schemes for discrete approximations  to viscosity solutions of the Dirichlet problem for dynamic programming principles for the $p$-Laplacian is given. The proof is based on a numerical analysis approach,  the Barles-Souganidis framework which consists in checking stability, consistency and monotonicity of the numerical scheme. It is known that the Barles-Souganidis framework requires the so-called strong uniqueness property for the differential equation, which is a comparison principle for equations with the boundary condition in the viscosity sense. In \cite{del2018convergence} the strong uniqueness property for the Laplace equation is proved for smooth domains and convergence of the numerical scheme on a bounded Lipschitz domain is obtained through barriers on appropriate shrinking rings. We review below their approach for the standard %monotone 
discretization of the Laplace equation.

%An extended domain is used. We sketch the proof \ldots

Recall that $\Omega \subset \R^d$ is a bounded Lipschitz domain %with boundary $\partial \Omega$ 
and $g \in C(\partial \Omega)$. We consider the problem
\begin{align} \label{d1}
\begin{split}
-\Delta u & = 0 \ \text{ in } \Omega \\
u & = g \ \text{ on } \partial \Omega.
\end{split}
\end{align}
We define for $h>0$ the following sets:

Outer boundary strip: $\Gamma_{h} = \{ \, x \in \R^d\setminus \Omega, d(x,\partial \Omega) \geq h\,  \}$ and put $O=\Gamma_1$.

Inner boundary strips:  $I_{h} = \{ \,  x \in  \Omega, d(x,\partial \Omega) \leq h \,  \}$ and put $I=I_1$.

Extended domain: $\widetilde{\Omega} = \Omega \cup O$ and extended computational domain $\widetilde{\Omega}_h = \Omega_h \cup O$.

Let $G$ be a continuous extension of $g$ to $\widetilde{\Omega}$. Note that $\partial \Omega_h \subset O$. Given a mesh function $u_h$, we extend it to $\Omega_h \cup O$ by $u_h(x)=G(x)$ for all $x \in O \setminus \partial \Omega_h$.  Analogous to \cite[(2.9)]{del2018convergence}, we consider the discrete problem: find a mesh function $u_h$ such that
\begin{align}
\begin{split} \label{d11h}
-\Delta_h u_h & = 0 \ \text{ on } \Omega_h \\
u_h & = G  \ \text{ on } O.
\end{split}
\end{align} 
A mesh function solves \eqref{d01h} if and only if it solves \eqref{d11h}.

In \cite{del2018convergence} a notion of viscosity solution of \eqref{d1} is first given. % with test functions with non vanishing gradients. 
The authors therein recall the existence and uniqueness of such a viscosity solution. They then introduce a generalized version of viscosity solution where the boundary condition is assumed in a weaker sense. That notion has become known as boundary condition in the viscosity sense.

An upper semi-continuous function on $\tir{\Omega}$ is a viscosity subsolution of \eqref{d1} if whenever $x_0 \in \tir{\Omega}$ and $\phi \in C^2(\tir{\Omega})$ satisfy
$$
\phi(x_0) = u(x_0), (u-\phi)(x)< (u-\phi)(x_0) \text{ for } x\neq x_0, %D \phi(x_0) \neq 0,
$$
we have
\begin{align}
-\Delta \phi(x_0) & \leq 0 \ \text{ if } x_0 \in \Omega\\
u(x_0) - g(x_0) & \leq 0 \ \text{ if } x_0 \in \partial \Omega. \label{d1b}
\end{align}
If the condition \eqref{d1b} at the boundary is replaced by
\begin{equation}
\min \{ \, -\Delta \phi(x_0), u(x_0) - g(x_0) \, \} \leq 0, x_0 \in \partial \Omega,
\end{equation}
the function $u$ is said to be a generalized viscosity subsolution of \eqref{d1}.

A lower semi-continuous function on $\tir{\Omega}$ is a viscosity supersolution of \eqref{d1} if whenever $x_0 \in \tir{\Omega}$ and $\phi \in C^2(\tir{\Omega})$ satisfy
$$
\phi(x_0) = u(x_0), (u-\phi)(x)> (u-\phi)(x_0) \text{ for } x\neq x_0, %D \phi(x_0) \neq 0,
$$
we have
\begin{align}
-\Delta \phi(x_0) & \geq 0 \ \text{ if } x_0 \in \Omega\\
u(x_0) - g(x_0) & \geq 0 \ \text{ if } x_0 \in \partial \Omega.  \label{d1c}
\end{align}
If the condition \eqref{d1c} at the boundary is replaced by
\begin{equation}
\max \{ \, -\Delta \phi(x_0), u(x_0) - g(x_0) \, \} \geq 0, x_0 \in \partial \Omega,
\end{equation}
the function $u$ is said to be a generalized viscosity supersolution of \eqref{d1}.

A function $u \in C(\tir{\Omega})$ is a viscosity solution of \eqref{d1} if it is both a viscosity subsolution and a viscosity supersolution of \eqref{d1}. Note that for this notion the boundary condition is taken in the usual sense.

%{\it Let's be careful: do we need $u \in USC(\tir{\Omega})$ for generalized subsolution?}

\begin{theorem} \cite[Theorem 3.4]{del2018convergence} \label{class-comp}
Let $\Omega \subset \R^d$ be a bounded Lipschitz domain and $g\in C(\partial \Omega)$. If $u$ is a viscosity subsolution of \eqref{d1} and $v$ a supersolution of \eqref{d1}, then $u\leq v$ on $\tir{\Omega}$.

\end{theorem}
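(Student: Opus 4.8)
The plan is to argue by contradiction with the doubling-of-variables method, preceded by a perturbation that is unavoidable here: since $-\Delta u=0$ has no zeroth-order term, the interior doubling argument alone is inconclusive for this degenerate operator, and the boundary data must be brought in. Suppose $M:=\max_{\tir{\Omega}}(u-v)>0$, the maximum being attained because $u-v$ is upper semicontinuous on the compact set $\tir{\Omega}$. I would first replace $u$ by the strict subsolution $u_\epsilon(x)=u(x)+\epsilon\|x\|^2$. As $\Delta(\|x\|^2)=2d$, at any interior point where a paraboloid $\phi$ touches $u_\epsilon$ from above the associated Hessian jet $X$ satisfies $\tr(X)\ge 2\epsilon d$, while on $\partial\Omega$ the subsolution boundary condition reads $u_\epsilon\le g+\epsilon\|x\|^2$. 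Since $u\le u_\epsilon$ and $u_\epsilon\to u$ uniformly on the bounded set $\tir{\Omega}$, it suffices to prove $\max_{\tir{\Omega}}(u_\epsilon-v)\le\epsilon C_0$ with $C_0=\max_{\tir{\Omega}}\|x\|^2$ and then let $\epsilon\to0$.

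Next I would set, for $\alpha>0$,
\[
\Phi_\alpha(x,y)=u_\epsilon(x)-v(y)-\frac{\alpha}{2}\|x-y\|^2,
\]
and let $(x_\alpha,y_\alpha)$ be a maximizer over $\tir{\Omega}\times\tir{\Omega}$. The standard penalization lemma gives $\alpha\|x_\alpha-y_\alpha\|^2\to0$ and that, along a subsequence, $x_\alpha,y_\alpha\to\hat x$ with $u_\epsilon(\hat x)-v(\hat x)=M_\epsilon:=\max_{\tir{\Omega}}(u_\epsilon-v)$. If $x_\alpha,y_\alpha\in\Omega$ for a sequence $\alpha\to\infty$, the Crandall--Ishii theorem on sums furnishes $X,Y\in\Sym$ with $(\alpha(x_\alpha-y_\alpha),X)\in\bar J^{2,+}u_\epsilon(x_\alpha)$, $(\alpha(x_\alpha-y_\alpha),Y)\in\bar J^{2,-}v(y_\alpha)$ and $X\le Y$. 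Then the strict subsolution inequality gives $\tr(X)\ge2\epsilon d$, the supersolution inequality gives $\tr(Y)\le0$, and $X\le Y$ forces $\tr(X)\le\tr(Y)$, whence $2\epsilon d\le0$, a contradiction. Thus for large $\alpha$ at least one component of the maximizer lies on $\partial\Omega$.

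It remains to treat that boundary situation. If $x_\alpha\in\partial\Omega$, then $x\mapsto v(y_\alpha)+\tfrac{\alpha}{2}\|x-y_\alpha\|^2+\tfrac14\|x-x_\alpha\|^4$ is an admissible test function for which $u_\epsilon-\phi$ has a strict maximum at $x_\alpha$, so the subsolution boundary condition yields $u(x_\alpha)\le g(x_\alpha)$; symmetrically, $y_\alpha\in\partial\Omega$ gives $v(y_\alpha)\ge g(y_\alpha)$. When both points sit on $\partial\Omega$ these inequalities, together with the continuity of $g$, $x_\alpha,y_\alpha\to\hat x\in\partial\Omega$ and $\alpha\|x_\alpha-y_\alpha\|^2\to0$, bound $\Phi_\alpha(x_\alpha,y_\alpha)$ above by $g(x_\alpha)-g(y_\alpha)+\epsilon\|x_\alpha\|^2\to\epsilon\|\hat x\|^2\le\epsilon C_0$, so $M_\epsilon\le\epsilon C_0$. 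Passing to the limit $\epsilon\to0$ then gives $u\le v$ on $\tir{\Omega}$.

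The main obstacle is the mixed boundary configuration, in which $x_\alpha\in\partial\Omega$ while $y_\alpha\in\Omega$ (or vice versa): the interior theorem on sums is then unavailable for the boundary variable, and on a merely Lipschitz domain one cannot rely on smooth exterior barriers to force the maximizing pair to the same side. Making the viscosity boundary condition interact correctly with the penalization in this mixed regime — so that a boundary maximizer for one variable genuinely caps $M_\epsilon$ by the data $g$ — is the delicate point, and is precisely where the shrinking-ring barrier construction reviewed in the appendix (from \cite{del2018convergence}) is needed.
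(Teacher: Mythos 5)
Your skeleton (perturb $u$ to the strict subsolution $u_\epsilon=u+\epsilon\|x\|^2$, double variables, apply the theorem on sums in the interior, use the boundary data to cap the penalized maximum) is the standard route, and both your interior case and your both-points-on-the-boundary case are handled correctly. But the proof is not complete: you explicitly leave open the mixed configuration $x_\alpha\in\partial\Omega$, $y_\alpha\in\Omega$ (or vice versa), and the tool you propose for closing it is the wrong one. The shrinking-ring barriers reviewed in the appendix have nothing to do with Theorem \ref{class-comp}: in \cite{del2018convergence} (and in Lemma \ref{bd-lem} here) they are used to show that solutions of the \emph{discrete} scheme \eqref{d11h} attain the boundary data on a Lipschitz domain, i.e.\ they substitute for the strong uniqueness property (Proposition \ref{strong}) when the boundary behavior is only available in a generalized sense. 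Theorem \ref{class-comp} concerns sub/supersolutions whose boundary inequalities hold in the classical sense, and for those no barrier construction, and indeed no regularity of $\partial\Omega$ beyond boundedness of $\Omega$, is needed. (Note also that the paper itself gives no proof of this statement; it is quoted from \cite[Theorem 3.4]{del2018convergence}, so there is no in-paper argument to compare against.)

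The mixed case in fact never has to be treated: it is ruled out by localizing \emph{before} applying the theorem on sums. Since the boundary conditions are classical, $u\leq g\leq v$ on $\partial\Omega$, hence $u_\epsilon-v\leq\epsilon C_0$ pointwise on $\partial\Omega$. Assume $M_\epsilon:=\max_{\tir{\Omega}}(u_\epsilon-v)>\epsilon C_0$ (otherwise you are done). Then the set $K$ of maximizers of $u_\epsilon-v$ is a closed subset of $\tir{\Omega}$ disjoint from $\partial\Omega$, hence a compact subset of $\Omega$. The penalization lemma you invoke gives more than $\alpha\|x_\alpha-y_\alpha\|^2\to0$: every subsequential limit of $(x_\alpha,y_\alpha)$ is a diagonal pair $(\hat x,\hat x)$ with $u_\epsilon(\hat x)-v(\hat x)=M_\epsilon$, i.e.\ $\hat x\in K$ (this uses only upper semicontinuity of $u_\epsilon-v$ and $\Phi_\alpha(x_\alpha,y_\alpha)\geq M_\epsilon$). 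Consequently, for all sufficiently large $\alpha$ both $x_\alpha$ and $y_\alpha$ lie in a fixed neighborhood of $K$ compactly contained in $\Omega$ — if some subsequence had a component on $\partial\Omega$, its limit would lie in $K\cap\partial\Omega=\emptyset$. So only your interior case occurs, the Crandall--Ishii contradiction $2\epsilon d\leq\tr(X)\leq\tr(Y)\leq 0$ applies, and you conclude $M_\epsilon\leq\epsilon C_0$; letting $\epsilon\to0$ gives $u\leq v$ on $\tir{\Omega}$. With this localization inserted in place of your final paragraph, the proof is complete and elementary.
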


The authors in \cite{del2018convergence} proved the strong uniqueness property for smooth domains.

\begin{proposition} \cite{del2018convergence,Barles-Burdeau} \label{strong}
Let $\Omega \subset \R^d$ be a $C^2$ domain and $g \in C(\partial \Omega)$. Let $u$ and $v$ be respectively generalized viscosity subsolution and supersolution of \eqref{d1}. Then $u \leq v$ in $\tir{\Omega}$.
\end{proposition}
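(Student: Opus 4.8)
The plan is to reduce the generalized comparison to the classical one already recorded in Theorem \ref{class-comp}. The key observation is that the generalized and the usual notions of sub/supersolution of \eqref{d1} differ \emph{only} in the way the boundary condition is imposed: in the interior the defining inequalities are literally identical, and only the boundary inequalities \eqref{d1b}--\eqref{d1c} are relaxed. Consequently it suffices to prove the boundary upgrade: on a $C^{2}$ domain a generalized subsolution $u$ satisfies $u\le g$ on $\partial\Omega$, and a generalized supersolution $v$ satisfies $v\ge g$ on $\partial\Omega$. Once this is known, $u$ and $v$ are a subsolution and a supersolution of \eqref{d1} in the usual sense, and Theorem \ref{class-comp} yields $u\le v$ on $\tir{\Omega}$, which is the assertion of Proposition \ref{strong}.

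To carry out the upgrade for $u$, I would fix $x_{0}\in\partial\Omega$ and a $C^{2}$ function $\phi$ touching $u$ from above at $x_{0}$ (if no such $\phi$ exists the boundary condition is vacuous and nothing is to prove). The generalized condition supplies $\min\{-\Delta\phi(x_{0}),\,u(x_{0})-g(x_{0})\}\le 0$, so it is enough to rule out the PDE alternative, i.e.\ to exhibit an \emph{admissible} touching test function whose Laplacian is strictly negative at $x_{0}$; then $-\Delta\phi(x_{0})>0$ forces $u(x_{0})-g(x_{0})\le 0$. This is exactly where the $C^{2}$ hypothesis enters: smoothness of $\partial\Omega$ provides a uniform exterior ball at $x_{0}$ and a signed distance function that is $C^{2}$ near $\partial\Omega$, from which one builds an explicit superharmonic barrier (for instance from the Newtonian kernel centered at the exterior-ball center) that can be grafted onto $\phi$ so as to bend it into strict superharmonicity at $x_{0}$ while preserving contact from above. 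The dual construction, using the interior ball and a subharmonic barrier, forces $v(x_{0})-g(x_{0})\ge 0$ for the generalized supersolution $v$. This is the barrier estimate of Barles--Burdeau specialized to $-\Delta$.

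I expect the single genuine obstacle to be precisely this boundary barrier construction: one must simultaneously maintain one-sided contact of the test function with $u$ (resp.\ $v$) and control the sign of its Laplacian at $x_{0}$, and these two requirements pull in opposite directions unless the geometry of the domain is exploited quantitatively. All the analytic weight of the proof sits here, and it is the only step in which $C^{2}$ regularity is indispensable; for merely Lipschitz domains these pointwise barriers are unavailable, which is exactly why the shrinking-ring barriers of \cite{del2018convergence} reviewed in this appendix are required instead.
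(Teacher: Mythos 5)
You should know at the outset that the paper does not prove Proposition \ref{strong} at all: it is imported verbatim from \cite{Barles-Burdeau,del2018convergence}, where it is established by a doubling-of-variables argument with a penalization adapted to the boundary through the (smooth) distance function of the $C^2$ domain. So your proposal has to stand on its own, and it has a genuine gap.

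The gap sits precisely in the parenthetical you use to dismiss it. To invoke Theorem \ref{class-comp} you need the pointwise boundary inequalities $u\le g$ and $v\ge g$ on all of $\partial\Omega$: that is what ``viscosity sub/supersolution'' means in \cite{del2018convergence}, whose Theorem 3.4 is the content of Theorem \ref{class-comp}. Your barrier argument yields $u(x_0)\le g(x_0)$ only at boundary points at which some $C^2$ function touches $u$ from above; that part is essentially correct (with small repairs: the Newtonian kernel is harmonic, not strictly superharmonic, so for the exterior ball $B_r(z)$ at $x_0$ one should take $w(x)=r^{-\xi}-\|x-z\|^{-\xi}$ with $\xi>d-2$, and on the supersolution side one subtracts this same exterior-ball barrier — an interior ball produces a function of the wrong sign on $\tir{\Omega}$). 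But at a boundary point where no admissible test function exists, the generalized condition is vacuous, and a vacuous hypothesis proves nothing: ``nothing is to prove'' should read ``nothing can be proved''. Such points genuinely occur for merely upper semicontinuous subsolutions: a function that is subharmonic in $\Omega$ can behave near $x_0\in\partial\Omega$ like $u(x_0)+\|x-x_0\|^{1/2}-C\|x-x_0\|$ (note $\|x-x_0\|^{1/2}$ is subharmonic for $d\ge 2$), and then no $C^1$ function, let alone $C^2$, touches it from above at $x_0$. At such a point your argument gives no relation between $u(x_0)$ and $g(x_0)$, whereas the maximum of $u-v$ over $\tir{\Omega}$ — which lies on $\partial\Omega$, since $u-v$ is subharmonic — may be attained exactly there. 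Ruling out this scenario is the actual content of the strong uniqueness property, so the proposal begs the question at the only place where the question is hard.

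One might object that the paper's own formulation of the classical boundary condition \eqref{d1b} is itself conditioned on the existence of a touching test function, so that non-touchable points impose nothing in either notion and your reduction is formally complete. But then Theorem \ref{class-comp} as cited no longer applies: \cite[Theorem 3.4]{del2018convergence} is proved for subsolutions satisfying $u\le g$ pointwise on $\partial\Omega$, and a comparison theorem that assumes the boundary inequality only at touchable points is not in the literature you may quote — proving it is the same difficulty in different clothing. This is exactly why \cite{Barles-Burdeau} and \cite{del2018convergence} do not upgrade the boundary condition pointwise. Instead they double variables and add to the quadratic penalization correction terms built from the $C^2$ distance function; the penalized maximization then produces touching test functions at nearby points (where they exist automatically, because perturbed maxima of usc functions are always touchable), with Laplacians of controlled sign, and the generalized boundary conditions are exploited at those points before passing to the limit. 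The $C^2$ hypothesis enters through the regularity of the distance function in that construction, not only through exterior balls, and the touchability obstruction is bypassed rather than confronted pointwise.
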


Using standard arguments we review below, the above proposition allows for $\Omega$ a $C^2$ domain to claim the convergence of the solution $u_h$ of \eqref{d11h} to the viscosity solution of \eqref{d1}. We define for a $C^2$ function $\phi$ on $\Omega_E$
\begin{align*}
S(h,x,\phi(x),\phi) &= \begin{cases} - \sum_{i=1}^d \Delta_{e_i} \phi(x), & x \in \Omega \\
  \phi(x)-G(x), & x \in O, 
\end{cases}
\end{align*}
with the operator $\Delta_e$ defined as for \eqref{Delta-e}. We write \eqref{d1} in the standard form
%\begin{align*}
%S(h,x,u_h(x),u_h) &= - \sum_{i=1}^d \Delta_{e_i} u_h(x), x \in \Omega_h \\
%S(h,x,u_h(x),u_h) & = u_h(x)-G(x), x \in O.
%\end{align*}
$S(h,x,u_h(x),u_h)=0, x \in \widetilde{\Omega}_h$ with a slight abuse of notation. We have the following analogues of \cite[(2.11)--(2.12)]{del2018convergence}.

If $u_h$ solves \eqref{d1}, we have
\begin{equation}\label{dmax}
\inf_O G \leq u_h(x) \leq \sup_O G, x \in \Omega_h.
\end{equation}
Let $u_h^1$ and $u_h^2$ solve
\begin{align*}
-\Delta_h u_h^1 & \leq 0 \text{ in } \Omega_h \text{ and } u_h^1 = G^1  \text{ on } O \\
-\Delta_h u_h^2 & \geq 0 \text{ in } \Omega_h \text{ and } u_h^2 = G^2  \text{ on } O,
\end{align*}
with $G^1 \leq G^2$. Then 
\begin{equation} \label{dcomp}
u_h^1 \leq u_h^2  \text{ on } \widetilde{\Omega}_h.
\end{equation}
Equation \eqref{dmax} says that the scheme is stable. It is a consequence of properties of the matrix of the discrete linear problem \eqref{d1} \cite[Theorem 4.77]{Hackbusch2010}. The proof is analogous to \cite[Remark 4.37]{Hackbusch2010}.

Equation \eqref{dcomp} is the discrete comparison principle. Again, it is a consequence of \cite[Theorem 4.77]{Hackbusch2010}. The proof is analogous to \cite[Theorem 4.38 b]{Hackbusch2010}.

Next, we recall the monotonicity of the scheme, i.e.  if $u_h \leq v_h$ on $\widetilde{\Omega}_h$ with $u_h(x)=v_h(x)$ we have
$S(h,x,u_h(x),u_h)\leq S(h,x,v_h(x),v_h)$.

Next, we describe the form of consistency of the scheme needed to prove convergence when the boundary condition is taken in the viscosity sense. For $x \in \tir{\Omega}$ and $\phi \in C^2(\Omega_E)$
\begin{align*}
\limsup_{h \to 0, y\to x, \zeta \to 0} S(h,y, \phi(y)+\zeta, \phi+\zeta) = \begin{cases}  -\Delta \phi(x) & \text{ if } x \in \Omega   \\
\max\{ \, -\Delta \phi(x), \phi(x)-G(x) \, \} & \text{ if } x \in \partial \Omega. \end{cases}
\end{align*}
and 
\begin{align*}
\liminf_{h \to 0, y\to x, \zeta \to 0} S(h,y, \phi(y)+\zeta, \phi+\zeta) = \begin{cases} -\Delta \phi(x) & \text{ if } x \in \Omega \\
\min\{ \, -\Delta \phi(x), \phi(x)-G(x) \, \} & \text{ if } x \in \partial \Omega. \end{cases}
\end{align*}
For $x\in \Omega$, we have the usual consistency property. For $ x \in \partial \Omega$, one can approach $x$ with points $y$ in either $\Omega$ or $O$.

\begin{theorem}
Assume that $\Omega$ is a $C^2$ domain. The solution $u_h$ of \eqref{d11h} converges uniformly on $\tir{\Omega}$ to the viscosity solution of \eqref{d1}. 
\end{theorem}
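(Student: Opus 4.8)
The plan is to run the Barles--Souganidis convergence argument through the half-relaxed limit method, feeding in the stability, monotonicity, consistency and strong uniqueness properties assembled above. First I would introduce the upper and lower relaxed limits
$$
\bar u(x) = \limsup_{h \to 0,\, y \to x} u_h(y), \qquad \underline u(x) = \liminf_{h \to 0,\, y \to x} u_h(y), \qquad x \in \tir{\Omega}.
$$
By the stability estimate \eqref{dmax} the solutions $u_h$ of \eqref{d11h} are uniformly bounded between $\inf_O G$ and $\sup_O G$, so $\bar u$ and $\underline u$ are finite. By construction $\bar u$ is upper semicontinuous, $\underline u$ is lower semicontinuous, and $\underline u \le \bar u$ on all of $\tir{\Omega}$.

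Next I would show that $\bar u$ is a generalized viscosity subsolution and $\underline u$ a generalized viscosity supersolution of \eqref{d1}; this is the heart of the argument. For the subsolution property, take $x_0 \in \tir{\Omega}$ and $\phi \in C^2(\tir{\Omega})$ such that $\bar u - \phi$ has a strict local maximum at $x_0$. A standard compactness argument produces mesh points $y_h \to x_0$ with $u_h(y_h) \to \bar u(x_0)$ at which $u_h - \phi$, after a constant shift $\zeta_h \to 0$, attains a local maximum over $\widetilde{\Omega}_h$. Comparing $u_h$ with this shifted test function, using the monotonicity of the scheme together with the fact that $u_h$ solves \eqref{d11h}, one obtains $S(h, y_h, \phi(y_h)+\zeta_h, \phi+\zeta_h) \le 0$. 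Passing to the liminf and invoking the consistency relation for $\liminf S$ yields $-\Delta\phi(x_0) \le 0$ when $x_0 \in \Omega$ and $\min\{ -\Delta\phi(x_0), \bar u(x_0) - g(x_0) \} \le 0$ when $x_0 \in \partial\Omega$, which is precisely the generalized subsolution condition. The supersolution property of $\underline u$ follows symmetrically, using the consistency relation for $\limsup S$ and the generalized supersolution inequality with $\max$.

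With these in hand I would invoke the strong uniqueness property, Proposition \ref{strong}, which applies precisely because $\Omega$ is assumed $C^2$: since $\bar u$ is a generalized subsolution and $\underline u$ a generalized supersolution, it gives $\bar u \le \underline u$ on $\tir{\Omega}$. Combined with $\underline u \le \bar u$ this forces $u := \bar u = \underline u$, and this common function, being both upper and lower semicontinuous, is continuous on $\tir{\Omega}$. The coincidence of the two half-relaxed limits together with continuity on the compact set $\tir{\Omega}$ yields uniform convergence of $u_h$ to $u$ by the usual relaxed-limit argument. Finally, to identify $u$ as the viscosity solution of \eqref{d1}, I would recall that such a solution $w$ exists and is in particular a classical, hence a generalized, subsolution and supersolution; applying Proposition \ref{strong} once to the pair $(u,w)$ and once to $(w,u)$ gives $u \le w$ and $w \le u$, so $u = w$.

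The main obstacle is the second step, and within it the behaviour at boundary points. Producing the mesh maximizers $y_h$ and passing correctly to the limit there requires the full boundary consistency relations, whose $\max$/$\min$ structure records that a boundary point may be approached either from inside $\Omega$ or from the outer strip $O$; this is exactly what forces the boundary condition to be read in the viscosity sense rather than the classical sense at the discrete level. The interior case is the routine consistency of the standard Laplacian stencil, but the boundary case is where the $C^2$ regularity of $\partial\Omega$ and the barrier analysis reviewed in the appendix are essential.
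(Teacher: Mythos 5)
Your proof is correct and takes essentially the same approach as the paper, whose entire proof is a one-line invocation of the Barles--Souganidis framework of \cite{Barles1991} together with Proposition \ref{strong}; your argument simply unfolds that framework explicitly (half-relaxed limits, generalized sub/supersolution properties via stability, monotonicity and the $\min$/$\max$ consistency relations, then strong uniqueness and identification of the limit). The only slip is in your closing commentary: for the $C^2$ case the barrier analysis of the appendix plays no role (it is needed only for the Lipschitz case of the subsequent theorem), since the $C^2$ hypothesis enters solely through Proposition \ref{strong}.
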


\begin{proof}
Since the scheme is stable, consistent and monotone, it follows from Proposition \ref{strong} and the framework in \cite{Barles1991} that the solution $u_h$ of \eqref{d11h} converges uniformly on $\tir{\Omega}$ to the viscosity solution of \eqref{d1}. 

\qed
\end{proof}

To handle the case $\Omega$ Lipschitz, a delicate treatment at the boundary is done in \cite{del2018convergence} using barriers on appropriate shrinking rings. Denote by $B_r(x)$ the open ball of center $x$ and radius $r$.
The following regularity condition for Lipschitz domains is the one used in the proof.

There exists $\tir{\delta} >0$ and $\mu \in (0,1)$ such that for every $\delta \in (0,\tir{\delta})$ and $y \in \partial \Omega$, there exists a ball $B_{\mu \delta}(z)$ strictly contained in $B_{\delta}(y) \setminus \Omega$. The constant $\mu$ is independent of $y \in \partial \Omega$.

We have the following analogue of \cite[Corollary 4.5]{del2018convergence}

\begin{lemma} \label{bd-lem}
Given $\eta>0$, there exist $\delta=\delta(\eta,G,\tir{\delta})$, $k_0=k_0(\eta,\mu,G)$, $h_0=h_0(\eta,\delta,\mu,k_0)$ such that
$$
|u_h(x)-G(y)| \leq \frac{\eta}{2},
$$
for all $y \in \partial \Omega, x \in B_{\delta/4^{k_0}}(y) \cap \Omega_h$ and $h\leq h_0$.
\end{lemma}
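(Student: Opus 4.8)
The plan is to transplant the shrinking-ring barrier argument of \cite[Corollary 4.5]{del2018convergence} to the discrete harmonic function $u_h$. I iterate a one-ring oscillation estimate across the balls $B_{\delta/4^{k}}(y)$, $k=0,\ldots,k_0$, the uniform exterior ball furnished by the regularity condition providing, at every scale, a barrier that damps the one-sided deviation of $u_h$ from $G(y)$ by a fixed factor $\theta\in(0,1)$. The discrete comparison principle \eqref{dcomp} and stability \eqref{dmax} drive the comparison, and the interior consistency of the scheme converts a continuous superharmonic barrier into a discrete one.

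First I would fix the scale: since $G$ is uniformly continuous on the compact set $\tir{\widetilde{\Omega}}$, with modulus $\omega_G$, pick $\delta\in(0,\tir{\delta})$ with $\omega_G(\delta)\le\eta$, which determines $\delta=\delta(\eta,G,\tir{\delta})$. Fix $y\in\partial\Omega$, set $c=G(y)$, and introduce $S_k=\sup_{B_{\delta/4^{k}}(y)\cap\Omega_h}(u_h-c)$ and $T_k=\sup_{B_{\delta/4^{k}}(y)\cap\Omega_h}(c-u_h)$. For the barrier, applying the regularity condition at scale $\delta/4^{k}$ gives a ball $B_{\mu\delta/4^{k}}(z_k)\subset B_{\delta/4^{k}}(y)\setminus\Omega$. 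On the annulus between radii $\mu\delta/4^{k}$ and $2\,\delta/4^{k}$ about $z_k$ I take a radial barrier $\Phi_k$ built from the profile $\|x-z_k\|^{2-d}$ (logarithmic when $d=2$), normalised to equal $1$ on the inner sphere and $0$ on the outer one and perturbed to be strictly superharmonic with a margin of order $(4^{k}/\delta)^{2}$. By the scale invariance of this profile under dilation by $4^{-k}$, its value on $B_{\delta/4^{k+1}}(y)$ is at most a constant $\theta=\theta(\mu,d)\in(0,1)$, independent of $k$. Since $\Phi_k$ is smooth and strictly superharmonic, the interior consistency of the scheme shows $-\Delta_h\Phi_k\ge0$ on the annulus once $h$ is small relative to $\delta/4^{k}$; hence the mesh restriction of $\Phi_k$ is a discrete supersolution and $-\Phi_k$ a subsolution.

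Comparing $u_h$ with $c+\omega_G(\delta/4^{k})+S_k\Phi_k$, and symmetrically with $c-\omega_G(\delta/4^{k})-T_k\Phi_k$, via \eqref{dcomp}—using that the boundary values of $G$ on $O\cap B_{\delta/4^{k}}(y)$ lie within $\omega_G(\delta/4^{k})$ of $c$ and that $\Phi_k=1$ on the outer sphere—yields for $h\le h_k$ the one-step estimates
$$
S_{k+1}\le\theta\,S_k+\omega_G(\delta/4^{k}),\qquad T_{k+1}\le\theta\,T_k+\omega_G(\delta/4^{k}).
$$
Starting from $S_0,T_0\le 2\sup_O|G|$, which follows from \eqref{dmax}, iteration gives
$$
S_{k_0}\le\theta^{k_0}\,2\sup_O|G|+\sum_{k=0}^{k_0-1}\theta^{\,k_0-1-k}\,\omega_G(\delta/4^{k}),
$$
and likewise for $T_{k_0}$. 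Since $\omega_G(\delta/4^{k})\to0$, splitting the sum at an intermediate index shows its right-hand side tends to $0$ as $k_0\to\infty$; I would therefore choose $k_0=k_0(\eta,\mu,G)$ so that this bound is at most $\eta/2$, and set $h_0=\min_{0\le k<k_0}h_k=h_0(\eta,\delta,\mu,k_0)$. Then $|u_h(x)-G(y)|\le\max(S_{k_0},T_{k_0})\le\eta/2$ for all $x\in B_{\delta/4^{k_0}}(y)\cap\Omega_h$ and $h\le h_0$, as required.

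The main obstacle is the barrier step: certifying that the mesh restriction of the smooth profile $\Phi_k$ is genuinely a discrete supersolution. The exact harmonic profile has $\Delta_h\Phi_k$ of one-signed size only up to the consistency error, which is of order $h^{2}(4^{k}/\delta)^{4}$ on the inner part of the annulus, so one must build in the strict superharmonicity margin $\sim(4^{k}/\delta)^{2}$ to absorb the sign of that error; this forces $h$ small compared with the finest scale $\delta/4^{k_0}$ and is exactly the origin of the dependence $h_0=h_0(\eta,\delta,\mu,k_0)$. The second delicate point is that the damping factor $\theta$ must be identical at every scale, which is guaranteed by the self-similarity of the radial profile and is what lets the finite iteration close up to the stated estimate.
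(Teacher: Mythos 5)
Your overall strategy is a legitimate variant of the paper's proof, and the place where you genuinely diverge is sound in principle: where the paper approximates the continuous ring barrier $U_k$ by solving an auxiliary discrete Dirichlet problem $U_k^h$ on the smooth ring and invoking the smooth-domain convergence theorem (Proposition \ref{strong} plus the Barles--Souganidis framework) to obtain \eqref{loc-err}, you discretize the barrier directly, perturbing it to a strict supersolution so that consistency of $\Delta_h$ makes its mesh restriction a discrete supersolution for $h$ small relative to $\delta/4^k$. That substitution is more elementary, avoids the convergence theory on smooth domains entirely, and your closing bookkeeping (the recursion $S_{k+1}\le\theta S_k+\omega_G(\delta/4^k)$, the geometric-sum argument for choosing $k_0$, and $h_0=\min_k h_k$) reproduces the paper's Part III in a cleaner additive form, with the correct dependencies $\delta(\eta,G,\tir{\delta})$, $k_0(\eta,\mu,G)$, $h_0(\eta,\delta,\mu,k_0)$.

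However, the induction step as written fails on a geometric containment. You take the exterior ball $B_{\mu\delta/4^{k}}(z_k)\subset B_{\delta/4^{k}}(y)\setminus\Omega$ at the \emph{same} scale as the ball defining $S_k$, and your annulus has outer radius $2\delta/4^{k}$ about $z_k$. Since $\|y-z_k\|$ can be as large as $(1-\mu)\delta/4^{k}$, the outer sphere of that annulus reaches distance nearly $3\delta/4^{k}$ from $y$, so it is \emph{not} contained in $B_{\delta/4^{k}}(y)$. Yet your application of \eqref{dcomp} needs $u_h-c\le S_k$ precisely on that outer sphere (that is where you invoke $\Phi_k=1$), and $S_k$ only controls $u_h-c$ on $B_{\delta/4^{k}}(y)$; the same overreach affects the modulus bound, which must be $\omega_G(3\delta/4^k)$ rather than $\omega_G(\delta/4^k)$. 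So the crucial boundary inequality for the comparison is unjustified and the recursion does not close. This is exactly the point where the paper builds in scale separation: the exterior ball is taken at the finer scale $\delta_{k+1}$ (so $\|y-z_k\|<\delta_{k+1}$), the discrete comparison is confined to $B_{\delta_k/2+h}(z_k)\subset B_{\delta_k}(y)$, and instead of the barrier's boundary value on the far outer sphere one uses the quantitative lower bound \eqref{49} for $U_k$ on the intermediate shell $\Gamma_2^h$, combined with \eqref{410}, to extract the damping factor $\theta$. Your argument can be repaired the same way --- take $z_k$ from the regularity condition at scale $\delta/4^{k+1}$ and outer radius $3\delta/4^{k+1}$, so the whole annulus sits inside $B_{\delta/4^{k}}(y)$ while points of $B_{\delta/4^{k+1}}(y)$ still satisfy $\Phi_k\le\theta<1$ --- but as stated the step is broken. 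A further internal slip in the same construction: you normalize $\Phi_k$ to equal $1$ on the inner sphere and $0$ on the outer one, yet both the comparison and the damping property require the opposite orientation, namely $\Phi_k=0$ near the exterior ball and $\Phi_k=1$ on the outer sphere.
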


%Before we give the proof of Lemma \ref{bd-lem}, we review the convergence result for smooth domains.

\begin{proof}

As in \cite{del2018convergence}, we prove only the one sided inequality $u_h(x) \leq G(y) + \eta/2$, the other being similar. Also, we consider only the case $d\neq 2$. The case $d=2$ is treated with similar arguments as indicated on \cite[p. 16]{del2018convergence}.

{\it Part I}: In this part we collect parts of the proof in \cite{del2018convergence}  which do not deal with dynamic programming.
Fix $\delta \in (0,\tir{\delta})$. Let $u_h$ solve \eqref{d11h}. For $y \in \partial \Omega$ define
$$
m^{h}(y) = \sup_{B_{5 \delta}(y) \cap \Gamma_{h}} G \text{ and } M^{h} = \sup_{ \Gamma_{h}} G.
$$
Define
$$
\theta =  \frac{1-\frac{1}{2} \big( \frac{\mu}{2-\mu}\big)^{\xi}-\frac{1}{2} \big( \frac{\mu}{2}\big)^{\xi}    }{1-\big( \frac{\mu}{2}\big)^{\xi}} \in (0,1) \text{ with } \xi=d-2,
$$
and for $k\geq 0$
$$
M_k^h(y) = m^{h}(y) + \theta^k( M^{h}-m^{h}(y) ).
$$
%Given $\delta>0$, 
Define $\delta_k=\delta/4^{k-1}$. By the regularity assumption on $\Omega$, one can find balls $B_{\mu\delta_{k+1}}(z_k) \subset B_{\delta_{k+1}}(y) \setminus \Omega$ for all $k$. In particular $||y-z_k|| < \delta_{k+1}$.

For notational convenience, denote $m=m^{h}(y), M=M^h$ and $M_k=M_k^h(y)$. We consider the problem
\begin{align*}
\Delta U_k & = 0 \ \text{in} \ B_{\delta_k}(z_k) \setminus \tir{B}_{\mu \delta_{k+1}}(z_k) \\
U_k & = m  \ \text{on} \  \partial B_{\mu \delta_{k+1}}(z_k) \\
U_k & = M_k  \ \text{on} \  \partial B_{\delta_k}(z_k).
\end{align*}
Define
$$
a= \frac{1-\big( \frac{\mu}{2}\big)^{\xi}}{1-\big( \frac{\mu}{4}\big)^{\xi}} \text{ and } b=1-a.
$$
and (see \cite[Figure 1]{del2018convergence})
$$
\Gamma_1^h=  B_{\delta_k/2+h}(z_k) \cap \Gamma_h \text{ and } \Gamma_2^h=  ( B_{\delta_k/2+h}(z_k)
\setminus  \tir{B}_{\delta_k/2}(z_k) ) \cap \Omega.
$$
It is proven in  \cite[page 17]{del2018convergence} that
\begin{equation} \label{49}
U_k \geq a M_k + b m \ \text{ in }  \Gamma_2^h,
\end{equation}
and that for $x \in B_{(2-\mu) \delta_{k+1}}(z_k)$
\begin{equation} \label{410}
U_k(x) \leq b' m + a'M_k,
\end{equation}
where
$$
a' = \frac{1-\big( \frac{\mu}{2-\mu}\big)^{\xi}}{1-\big( \frac{\mu}{4}\big)^{\xi}} \text{ and } b' = \frac{\big( \frac{\mu}{2-\mu}\big)^{\xi}-\big( \frac{\mu}{4}\big)^{\xi}}{1-\big( \frac{\mu}{4}\big)^{\xi}}.
$$

{\it Part II}: %Let $h_{k+1} = \mu \delta_{k+1}/2$.  
We assume in this part that $u_h \leq M_k$ on $B_{\delta_k}(y) \cap \Omega_h$ for all $h<h_k$ for a fixed $h_k$ and that 
$M_k-m\geq \eta/4$. We prove that there exists $h_{k+1}=h_{k+1}(\eta,\mu,\delta,d,G) \in (0,h_{k})$ such that for all $h<h_{k+1}$, we have
\begin{equation*} %\label{}
u_h \leq M_{k+1}^h(y) \ \text{in} \ B_{\delta_{k+1}}(y) \cap \Omega.
\end{equation*}

For $h \leq \mu \delta_{k+1}/2$ the barrier $U_k$ is extended to the ring
$$
R_{k,h} = B_{\delta_k+2 h}(z_k) \setminus \tir{B}_{\mu \delta_{k+1} - 2 h}(z_k).
$$
Let $U_k^h$ be a mesh function which solves
\begin{align*}
\Delta_h U_k^h & = 0 \ \text{in} \ R_k \\
U_k^h & = U_k \ \text{in} \ R_{k,h}\setminus R_k,
\end{align*}
where $R_k=B_{\delta_k}(z_k) \setminus \tir{B}_{\mu \delta_{k+1}}(z_k)$. Note that $R_{k,h}\setminus R_k$ is the outer $h$-neighborhood of $R_k$. Since $R_k$ is smooth, by Theorem \ref{strong}, $U_k^h$ converges uniformly to $U_k$ in $R_{k,h}$ as $h \to 0$ (recall that $U_k^h=U_k$ outside $R_k$).  Therefore, given
$$
\gamma=\gamma(\mu,d,\eta) = \frac{1}{4} \frac{\big( \frac{\mu}{\mu-2}\big)^{\xi} - \big( \frac{\mu}{2}\big)^{\xi} }{1-\big( \frac{\mu}{4}\big)^{\xi}}   \frac{\eta}{4} >0,
$$
there exists $h_{k+1} = h_{k+1} (\gamma), 0<  h_{k+1} \leq \min\{ \, \mu \delta_{k+1}/2, h_k \, \}$ such that
\begin{equation} \label{loc-err}
|U_k^h-U_k| \leq \gamma,
\end{equation}
for all $h\leq h_{k+1}$ and $x \in R_{k,h}$.

On $\Gamma_1^h$, we have $u_h=G \leq m$ and so, using \eqref{loc-err}  and $\Gamma_1^h \subset R_{k,h}$ (which follows from $||y-z_k|| < \delta_{k+1}$), we get
\begin{equation*}
a u_h + b m \leq m=\inf_{R_k} U_k \leq U_k^h+ \gamma.
\end{equation*}

Since $h<\delta_{k+1}/2$ and $||y-z_k|| < \delta_{k+1}$ we have
 $B_{\delta_k/2+h}(z_k) \subset B_{\delta_k}(y)$. If $u_h \leq M_k$ on $B_{\delta_k}(y) \cap \Omega_h$, using \eqref{49} and \eqref{loc-err} we get on
$\Gamma_2^h$
$$
a u_h + b m \leq a M_k+ b m \leq U_k \leq  U_k^h+ \gamma.
$$
In summary, we have
$$
a u_h + b m \leq U_k^h+ \gamma \text{ on } \Gamma_1^h \cup \Gamma_2^h.
$$
Since the $h$-boundary of $B_{\delta_k/2}(y) \cap \Omega_h$ is contained in $\Gamma_1^h \cup \Gamma_2^h$ (see \cite[Figure 1]{del2018convergence}), by the discrete comparison principle \eqref{dcomp}, we obtain
$$
a u_h + b m \leq U_k^h+ \gamma  \text{ on } B_{\delta_k/2}(y) \cap \Omega_h.
$$
Using again \eqref{loc-err} we have under the assumption $u_h \leq M_k$ on $B_{\delta_k}(y) \cap \Omega_h$
\begin{equation} \label{48}
a u_h + b m \leq U_k+ 2 \gamma  \text{ on } B_{\delta_k/2}(y) \cap \Omega_h.
\end{equation}

Next, since $B_{\delta_{k+1}}(y) \subset B_{\delta_k/2}(z_k)$ we get by \eqref{48}
\begin{equation} \label{411}
a u_h + b m \leq U_k+ 2 \gamma  \text{ on } B_{\delta_{k+1}}(y) \cap \Omega_h.
\end{equation}

As $B_{\delta_{k+1}}(y) \subset  B_{(2-\mu) \delta_{k+1}}(z_k)$, we have by \eqref{410} $a u_h + b m \leq b' m + a'M_k+ 2 \gamma$ on $B_{\delta_{k+1}}(y) \cap \Omega_h$. Since by assumption $M_k-m\geq \eta/4$, we have 
$$\gamma \leq \frac{1}{4} \frac{\big( \frac{\mu}{\mu-2}\big)^{\xi} - \big( \frac{\mu}{2}\big)^{\xi} }{1-\big( \frac{\mu}{4}\big)^{\xi}} (M_k-m).
$$
Using in addition
$b'-b+a'=a$, we obtain for $h<h_{k+1}$ in $B_{\delta_{k+1}}(y) \cap \Omega_h$
%\begin{align} \label{412}
%\begin{split}
\begin{multline} \label{412}
u_h \leq \frac{b'-b}{a} m + \frac{a'}{a} M_k +\frac{2 \gamma}{a} \leq m+\frac{a'}{a} (M_k-m) + \frac{b'(M_k-m)}{2 a} \\
=m+\theta (M_k-m) = m+\theta^{k+1} (M-m).
\end{multline}
%\end{split}
%\end{align}

{\it Part III}: As $G$ is uniformly continuous on the compact set $\Gamma_1$, there exists $0<\delta < \tir{\delta}$ such that for all $y \in \partial \Omega$,
\begin{equation} \label{mh}
m^h(y)-G(y)< \frac{\eta}{4}.
\end{equation}
By the stability result \eqref{dmax}
$$
u_h \leq \max_{x \in \partial \Omega_h} G \leq \sup_{\Gamma_h} G = M^h,
$$
for all $h$. Thus we can take $h_0=1$.

If $M^h_0(y)-m^h(y)=M^h-m^h(y) < \eta/4$, then by \eqref{mh}, we obtain $u_h \leq M^h <m^h(y)+\eta/4 < G(y)+\eta/2$. Otherwise, there exists $0<h_1<h_0$ such that for all $h<h_1$, $u_h\leq M_{1}^h(y)$ in $B_{\delta_{1}}(y) \cap \Omega$.
If $M^h_1-m^h(y) < \eta/4$ we proceed as before to obtain the desired inequality. 

As $0<\theta<1$, for some integer $s$, we have $ \theta^{s}( M^{h}-m^{h}(y) ) <\eta/4$ which with \eqref{mh} gives the desired inequality if $u_h \leq M^h_{s}$ on $B_{\delta_{s}}(y) \cap \Omega$. If for some $k<s$ we have $M^h_k-m^h(y) < \eta/4$, we take $k_0=k$. Otherwise $k_0=s$.
\qed
\end{proof}
We can now state

\begin{theorem}
Assume that $\Omega$ is a Lipschitz domain. The solution $u_h$ of \eqref{d11h} converges uniformly on compact subsets of $\Omega$ to the viscosity solution of \eqref{d1}. 
\end{theorem}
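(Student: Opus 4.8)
The plan is to run the Barles--Souganidis half-relaxed limits argument, using the barrier estimate of Lemma \ref{bd-lem} to recover the boundary condition in the \emph{classical} sense, so that the classical comparison principle Theorem \ref{class-comp}, which is valid on Lipschitz domains, can play the role that the strong uniqueness property (available only for $C^2$ domains via Proposition \ref{strong}) played in the smooth case.

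First I would invoke the stability estimate \eqref{dmax}, which gives $\inf_O G \le u_h \le \sup_O G$ uniformly in $h$, so that the half-relaxed limits
$$
\bar u(x) = \limsup_{\substack{h \to 0 \\ y \to x}} u_h(y), \qquad
\underline u(x) = \liminf_{\substack{h \to 0 \\ y \to x}} u_h(y)
$$
are finite on $\tir{\Omega}$, with $\bar u$ upper semicontinuous, $\underline u$ lower semicontinuous, and $\underline u \le \bar u$ everywhere. Using the consistency relations recorded above together with the monotonicity of the scheme, the standard half-relaxed limit argument \cite{Barles1991} shows that $\bar u$ is a viscosity subsolution and $\underline u$ a viscosity supersolution of $-\Delta u = 0$ in the interior $\Omega$.

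Next I would identify the boundary values, which is where the Lipschitz difficulty is concentrated. Fix $y \in \partial\Omega$ and $\eta > 0$. Lemma \ref{bd-lem}, together with its mirror inequality, produces a ball $B_{\delta/4^{k_0}}(y)$ and $h_0 > 0$ such that $|u_h(x) - G(y)| \le \eta/2$ for all $x \in B_{\delta/4^{k_0}}(y) \cap \Omega_h$ and $h \le h_0$. Passing to the $\limsup$ over such $(h,x)$ gives $\bar u(y) \le g(y) + \eta/2$, while passing to the $\liminf$ gives $\underline u(y) \ge g(y) - \eta/2$ (recall $G = g$ on $\partial\Omega$). Since $\eta$ is arbitrary and $\underline u \le \bar u$, this forces $\underline u(y) = \bar u(y) = g(y)$ for every $y \in \partial\Omega$; thus both half-relaxed limits attain the boundary datum of \eqref{d1} in the usual sense.

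With the boundary values attained classically, $\bar u$ is a viscosity subsolution and $\underline u$ a viscosity supersolution of \eqref{d1} on the Lipschitz domain $\Omega$, so Theorem \ref{class-comp} yields $\bar u \le \underline u$ on $\tir{\Omega}$. Combined with $\underline u \le \bar u$, this gives $\bar u = \underline u =: u$, a continuous function that is simultaneously a viscosity sub- and supersolution of \eqref{d1} with $u = g$ on $\partial\Omega$, hence the unique viscosity solution. The coincidence of the two half-relaxed limits is equivalent to the locally uniform convergence $u_h \to u$ on $\tir{\Omega}$, and in particular uniformly on compact subsets of $\Omega$, as asserted. I expect the main obstacle to be precisely the transfer in the third step: the local barrier bound of Lemma \ref{bd-lem} must be converted into the two-sided boundary identification $\underline u = \bar u = g$ on $\partial\Omega$ uniformly over boundary points, after which the classical comparison principle closes the argument without any appeal to strong uniqueness. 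The genuinely delicate analysis (the shrinking-ring barriers) has already been carried out in Lemma \ref{bd-lem}, so the remaining work is the assembly above.
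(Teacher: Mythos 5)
Your proposal is correct and follows essentially the same route as the paper: the paper's proof likewise uses Lemma \ref{bd-lem} to show the half-relaxed limits attain the boundary data in the classical sense, then closes the argument with the comparison principle of Theorem \ref{class-comp} and the Barles--Souganidis framework of \cite{Barles1991}. Your write-up simply fills in the standard details that the paper leaves implicit.
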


\begin{proof}
The proof is as on \cite[p. 18]{del2018convergence}. It follows from Lemma \ref{bd-lem} that the half relaxed limits satisfy the boundary condition in the classical sense. One can then use Theorem \ref{class-comp} and the framework in \cite{Barles1991}.
\qed
\end{proof}

\begin{acknowledgements}
The author would like to thank the referees for their comments which lead to a better presentation with simpler proofs. In
particular the statement of Lemmas \ref{equivalence} and \ref{cvg-cvx} are due to one of the referees as well as Theorem \ref{equi-measures} and its
proof. %A version of the statement of Lemma \ref{cvg-cvx} was also suggested by the referee. 
The proof of Lemma \ref{unif-disc-conv} given was suggested by one of the referees. The author was partially supported by NSF grants DMS-1319640 and  DMS-1720276. 
The author would like to thank the Isaac Newton Institute for Mathematical Sciences, Cambridge, for support and hospitality during the programme ''Geometry, compatibility and structure preservation in computational differential equations'' where part of this work was undertaken. Part of this work was supported by EPSRC grant no EP/K032208/1.
\end{acknowledgements}

% BibTeX users please use one of
%\bibliographystyle{spbasic}      % basic style, author-year citations
%\bibliographystyle{spmpsci}      % mathematics and physical sciences
%\bibliographystyle{spphys}       % APS-like style for physics
%\bibliography{}   % name your BibTeX data base
%\bibliography{/Users/gerard/Desktop/MathHome4/Monge/SplineMonge08}

% Non-BibTeX users please use

%\bibliographystyle{abbrv}  \bibliography{/Users/gerard/Desktop/MathHome4/Monge/SplineMonge08}\end{document}

\end{document}